\newtheorem{theorem}{Theorem}[section]
\newtheorem*{main*}{Main Theorem}
\newtheorem{lemma}[theorem]{Lemma}
\newtheorem{proposition}[theorem]{Proposition}
\theoremstyle{definition}
\newtheorem{definition}[theorem]{Definition}
\newtheorem{remark}[theorem]{Remark}
\newtheorem*{TheoremA}{Theorem A}
\newtheorem*{TheoremB}{Theorem B}
\newtheorem*{TheoremC}{Theorem C}
\newtheorem*{TheoremD}{Theorem D}
\newtheorem*{TheoremE}{Theorem E}
\newtheorem*{TheoremF}{Theorem F}
\newtheorem*{TheoremG}{Theorem G}
\newtheorem*{CorollaryA1}{Corollary A.1}
\newtheorem*{CorollaryA2}{Corollary A.2}
\newtheorem*{CorollaryC1}{Corollary C.1}
\newtheorem*{CorollaryC2}{Corollary C.2}
\newtheorem*{CorollaryF1}{Corollary F.1}
\title[Running heading with forty characters or less]
      {Unstable pressure and u-equilibrium states for partially hyperbolic diffeomorphsims}
\def\a{\alpha}
\def\b{\beta}
 \def\e{\varepsilon}
\def\ae{-\text{a.e.}\ }
\def\P{{\mathcal P}}
\def\det{\mathop{\hbox{{\rm det}}}}
\def\diam{\mathop{\hbox{{\rm diam}}}}
\def\loc{{\mathop{\hbox{\footnotesize  \rm loc}}}}
\def\disp{\displaystyle}
\author[]{Huyi Hu, Weisheng Wu and Yujun Zhu}
\subjclass{}
 \keywords{}
\address{Department of Mathematics, Southern University of Science and technology of China, Shenzhen, P. R. China\footnote{permenant addrss: Department of Mathematics, Michigan State University, East Lansing, MI 48824, USA, \email{hhu@math.msu.edu}}}
 \email{huhy@sustc.edu.cn}
\address{Department of Applied Mathematics, College of Science, China Agricultural University, Beijing, 100083, P. R. China}
 \email{wuweisheng@cau.edu.cn}
\address{School of Mathematical Sciences, Xiamen University, Xiamen, 361005, P. R. China}
 \email{yjzhu@xmu.edu.cn}
\begin{document}
\maketitle
\markboth{Unstable pressure and u-equilibrium states}
{Huyi Hu, Weisheng Wu and Yujun Zhu}
\renewcommand{\sectionmark}[1]{}

\begin{abstract}
Unstable pressure and u-equilibrium states are introduced and investigated for a partially hyperbolic diffeomorphsim $f$. We define the u-pressure $P^u(f, \varphi)$ of $f$ at a continuous function $\varphi$ via the dynamics of $f$  on local unstable leaves. A variational principle for unstable pressure $P^u(f, \varphi)$,
which states that $P^u(f, \varphi)$ is the supremum of the sum of the unstable entropy and the integral of $\varphi$ taken over all invariant measures, is obtained. U-equilibrium states at which the supremum in the variational principle attains and their relation to Gibbs u-states are studied. Differentiability properties of unstable pressure, such as tangent functionals, Gateaux differentiability and Fr\'{e}chet differentiability and their relations to u-equilibrium states, are also considered.
\end{abstract}

\section{Introduction}

Entropy and pressure are important invariants in the study of
dynamical systems and ergodic theory. Entropies, including
topological entropy and measure-theoretic entropy, are
measurements of  complexity of the orbit structure of the system from different points of view.
As a generalization of entropy, the concept of pressure was introduced by Ruelle \cite{DR} and studied in the general case in Walters \cite{W75}. In fact, the theory of pressure and its related topics, such as Gibbs measures and equilibrium states, are the main constituent components of the
mathematical statistical mechanics.

Let $f$ be a $C^{1+\alpha}$-diffeomorphism on a closed Riemannian manifold $M$ where $\alpha >0$. The well-known entropy formula in \cite{LY2} shows that if $\mu$ is an SRB measure, then the corresponding metric entropy $h_{\mu}(f)$ is the integration of the summation of the positive Lyapunov exponents. It tells us that positive exponents have contribution to the metric entropy. In particular, when $f$ is uniformly hyperbolic, both of the metric entropy and topological entropy are caused by the dynamics on the unstable foliations. However, when $f$ is (uniformly) partially hyperbolic, things become delicate. The presence of the center direction makes the dynamics much more complicated.

Recent years, the entropy theory for partially hyperbolic diffeomorphisms are increasingly investigated. We can see the progress in this research topic in \cite{HSX}, \cite{WZ}, \cite{Yang}, \cite{HHW}, etc.
In particular, for any $C^1$-partially hyperbolic diffeomorphism $f$, Hu, Hua and Wu \cite{HHW} introduce the definitions of unstable metric entropy $h^u_{\mu}(f)$ for  any invariant measure $\mu$ and unstable topological entropy $h_{\text{top}}^u(f)$. Precisely, $h^u_{\mu}(f)$ is defined by using
$H_{\mu}(\bigvee_{i=0}^{n-1}f^{-i}\alpha|\eta)$, where $\alpha$ is a finite measurable partition, and $\eta$ is a measurable
partition subordinate to unstable manifolds
that can be obtained by refining a finite partition into pieces of
unstable leaves; $h_{\text{top}}^u(f)$ is defined by the topological entropy of $f$ on local unstable manifolds. Similar to that in the classical entropy theory, the corresponding versions of Shannon-McMillan-Breiman theorem and local entropy formula for $h^u_{\mu}(f)$, and the variational principle relating $h^u_{\mu}(f)$ and $h_{\text{top}}^u(f)$ are given. The main feature of these unstable entropies is to rule out the complexity caused by the center directions and focus on that caused by the unstable directions. In fact, $h^u_{\mu}(f)$ is equal to $h_{\mu}(f, \xi):=H_{\mu}(\xi|f\xi)$ (where $\xi$ is an increasing partition subordinate to the unstable
leaves) which was introduced by Ledrappier and Young \cite{LY2}. Comparing the above two types of definition for the unstable metric entropy, we can see that the former one is more natural and easy-to-understand than  the latter one.

The main purpose of this paper is to introduce unstable topological pressure $P^u(f, \varphi)$ for a $C^1$-partially hyperbolic diffeomorphism $f: M\to M$ and any continuous function $\varphi$ on $M$, obtain a variational principle for this pressure, and investigate the corresponding so-called u-equilibriums.

Similar to the way by which the unstable entropy is defined in \cite{HHW}, the unstable pressure $P^u(f, \varphi)$ is defined via the information of the potential $\varphi$ as iterating $f$ on local unstable leaves (see Definition \ref{unstableentropy1}).
It is well known that the variational principle for the classical pressure was first given by Ruelle \cite{DR} for the system with the expansiveness and specification assumptions and then was obtained by Walters \cite{W82} for the general case. It shows that
$$
P(f, \varphi)=\sup \Big\{h_\mu(f)+\int_M \varphi d\mu: \mu \in \mathcal{M}_f(M)\Big\}
$$
where $\mathcal{M}_f(M)$ is the set of all $f$-invariant probability measures on $M$. We will combine the elegant method in Walters \cite{W82} and the technique in Hu, Hua and Wu \cite{HHW} to obtain the variational principle for unstable pressure (Theorem A), i.e., the equality as the above in which $P(f, \varphi)$ and $h_\mu(f)$ are replaced by $P^u(f, \varphi)$ and $h^u_\mu(f)$ respectively. In particular, if $\varphi\equiv 0$, then we get the variational principle for unstable entropy (Theorem D of \cite{HHW}).

The measure at which the supremum attains in the variational principle for unstable pressure $P^u(f, \varphi)$ is called a \emph{u-equilibrium state for $f$ at $\varphi$} (see Definition \ref{u-equilibrium}). Some fundamental properties for the set of u-equilibrium states are considered (Theorem B). Among these properties, we show that there always exists a u-equilibrium state for a $C^1$-partially hyperbolic diffeomorphism, in contrast to the case for the classical equilibrium state. This is essentially due to the upper semi-continuity of the unstable entropy map $\mu\mapsto h^u_\mu(f)$. For particular potential $\varphi^u=-\log \|Df|_{E^u}\|$, we relate the u-equilibrium states at $\varphi^u$ to the Gibbs u-states of $f$ (Theorem C). In \cite{W82} and \cite{W92}, some properties about the classical pressure and equilibrium states were investigated. We can consider the corresponding properties for unstable pressure and u-equilibrium states. We show that unstable pressure determines invariant measures (Theorem D) and there is a close relation between u-equilibrium states and tangent functionals (Theorem E). To study the uniqueness of u-equilibrium state, we define two types, Gateaux type and Frech\'{e}t type, of differentiability of unstable pressure of $f$ at $\varphi$, and obtain several properties of them (Theorems F and G).

The paper is organized as follows. In Section \ref{statements}, we give the definitions of unstable pressure and u-equilibrium state, and formulate the main results. We
provide some properties of unstable pressure in Section \ref{u-pressure}. Section \ref{pfThmA} is for the proof of the variational principle of unstable pressure. In Section \ref{uequi} and \ref{determin}, we consider the properties of u-equilibrium states and study how does the unstable pressure determine invariant measures. In Section \ref{Differentiability}, differentiability properties of unstable pressure are investigated.

\section{Definitions and statements of results}\label{statements}

Let $M$ be an $n$-dimensional smooth, connected and compact Riemannian manifold without boundary and $f: M \rightarrow M$ a $C^{1}$-diffeomorphism. $f$ is said to be \emph{partially hyperbolic} (cf. for example \cite{RRU}) if there exists a nontrivial $Df$-invariant splitting $TM= E^s \oplus E^c \oplus E^u$
of the tangent bundle into stable, center, and unstable distributions, such that all unit vectors $v^{\sigma} \in E_x^\sigma$ ($\sigma= c,s,u$) with $x\in M$ satisfy
\begin{equation*}
\|D_xfv^s\| < \|D_xfv^c\| < \|D_xfv^u\|,
\end{equation*}
and
\begin{equation*}
\|D_xf|_{E^s_x}\| <1 \ \ \ \text{\ and\ \ \ \ } \|D_xf^{-1}|_{E^u_x}\| <1,
\end{equation*}
for some suitable Riemannian metric on $M$. The stable distribution $E^s$ and unstable distribution $E^u$ are integrable to the stable and unstable foliations $W^s$ and $W^u$ respectively such that $TW^s=E^s$ and $TW^u=E^u$ (cf. \cite{HPS}).

In this paper we always assume that $f$ is a $C^1$-partially hyperbolic
diffeomorphism of $M$, and $\mu$ is an $f$-invariant probability measure.
The notion of unstable metric entropy of $\mu$ with respect to $f$ is introduced in \cite{HHW}, using
a type of measurable partitions consisting of local unstable leaves
that can be obtained by refining a finite partition into pieces of
unstable leaves.  We recall the construction of such measurable partitions and the definition of unstable metric entropy as follows. To begin with, we recall some standard notations and classical results on measurable partitions.

Let $(X, \mathcal{A}, \nu)$ be a stand probability space. For a partition $\a$ of $X$, let $\a(x)$ denote the element of $\a$
containing $x$.
If $\a$ and $\b$ are two partitions such that $\a(x)\subset \b(x)$
for all $x\in X$, we then write $\a \geq \b$ or $\b\leq \a$.
A partition $\xi$ is \emph{increasing} if $f^{-1}\xi \geq \xi$.
$\alpha\vee\beta:=\{A\cap B: A\in \alpha, B\in \beta\}$ is called the \emph{refinement} of $\alpha$ and $\beta$.
For a partition $\b$, we denote
$\disp\b_m^n=\vee_{i=m}^n f^{-i}\b$.  In particular,
$\disp\b_0^{n-1}=\vee_{i=0}^{n-1} f^{-i}\b$. A partition $\eta$ of $X$ is called \emph{measurable} if there exists a countable set $\{A_n\}_{n\in \mathbb{N}}\subset \mathcal{B}(\eta)$ such that
for almost every pair $C_1,C_2\in \eta$, we can find some $A_n$ which separates them
in the sense that $C_1\subset A_n, C_2\subset X-A_n$, where $\mathcal{B}(\eta)$ is the sub-$\sigma$-algebra of elements of $\mathcal{A}$ which are unions of elements of $\eta$. The \emph{canonical system
of conditional measures for $\nu$ and $\eta$} is a family of probability
measures $\{\nu_x^\eta: x\in X\}$ with $\nu_x^\eta\bigl(\eta(x)\bigr)=1$,
such that for every measurable set $B\subset X$, $x\mapsto \nu_x^{\eta}(B)$
is measurable and
\[
\nu (B)=\int_X\nu_x^{\eta}(B)d\nu(x).
\]
The classical result of Rokhlin (cf. \cite{R}) says that if $\eta$ is a measurable partition, then there exists a system of conditional measures relative to $\eta$. It is essentially unique
in the sense that two such systems coincide in a set of full $\nu$-measure. For measurable partitions $\a$ and $\eta$, let
$$H_\nu(\a|\eta):=-\int_M \log \nu_x^\eta(\alpha(x))d\nu(x)$$
denote the conditional entropy of $\a$ given $\eta$ with respect to $\nu$.

Now consider a $C^1$-partially hyperbolic diffeomorphism $f:M\to M$. Take $\e_0>0$ small.
Let $\P=\P_{\e_0}$ denote the set of finite Borel partitions of $M$
whose elements have diameters smaller than or equal to $\e_0$, that is,
$\diam \a:=\sup\{\diam A: A\in \a\}\le \e_0$.
For each $\b\in \P$ we can define a finer partition $\eta$ such that
$\eta(x)=\b(x)\cap W^u_\loc(x)$ for each $x\in M$, where $W^u_\loc(x)$
denotes the local unstable manifold at $x$ whose size is
greater than the diameter $\e_0$ of $\beta$. Since $W^u$ is a continuous foliation, $\eta$ is a measurable partition with respect to any Borel probability measure on $M$.
Let $\P^u$ denote the set of partitions $\eta$ obtained in this way and \emph{subordinate to unstable manifolds}.
%We denote such partition $\b$ by $\eta^\sharp$, that is, for any
%$\eta^\sharp\in \P$ and $\eta(x)=\eta^\sharp(x)\cap W^u_\loc(x)$ for any
%$x\in M$.
Here a partition $\eta$ of $M$ is said to be subordinate to unstable manifolds of $f$ with respect
to a measure $\mu$ if for $\mu$-almost every $x$,
$\eta(x)\subset W^u(x)$
and contains an open neighborhood of $x$ in $W^u(x)$.
It is clear that if $\a\in \P$ such that
$\mu(\partial \a)=0$ where $\partial \a:=\cup_{A\in \a} \partial A$,
then the corresponding $\eta$ given by $\eta(x)=\a(x)\cap W^u_\loc(x)$
is a partition subordinate to unstable manifolds of $f$.
%We denote by $\P^u$ the set of partitions in $\P^u_0$ that are
%subordinate to unstable manifolds of $f$.

\begin{definition}\label{Defuentropy}
The \emph{conditional entropy of $f$ with respect to a measurable partition $\a$
given $\eta\in \P^u$} is defined as
$$h_\mu(f, \alpha|\eta)
=\limsup_{n\to \infty}\frac{1}{n}H_\mu(\alpha_0^{n-1}|\eta).
$$
The \emph{conditional entropy of $f$ given $\eta\in \P^u$}
is defined as
$$h_\mu(f|\eta)
=\sup_{\alpha \in \P}h_\mu(f, \alpha|\eta).
$$
and the \emph{unstable metric entropy of $f$} is defined as
\[
h_\mu^u(f)=\sup_{\eta\in \P^u}h_\mu(f|\eta).
\]
\end{definition}

%\begin{remark}
%In the definition of $h_\mu(f, \alpha|\eta)$ we take $\limsup$ instead of
%$\lim$.  This is because the sequence $\{H_\mu(\alpha_0^{n-1}|\eta)\}$ is not
%necessary subadditive, since $\eta$ is not invariant under $f$.
%Therefore, existence of such limit is not obvious.
%\end{remark}

The following theorem is one of the main results in \cite{HHW} (cf. Theorem A and Corollary A.2 therein).
\begin{theorem}\label{ThmA}(Cf. \cite{HHW})
For any $\a\in \P$ and $\eta\in \P^u$,
$$h_\mu^u(f)=h_\mu(f|\eta)=h_\mu(f, \alpha|\eta)=\lim_{n\to\infty}\frac{1}{n}H_\mu(\a_0^{n-1}|\eta).$$
\end{theorem}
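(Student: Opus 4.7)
The theorem asserts three things simultaneously: (i) the $\limsup$ in Definition \ref{Defuentropy} is an honest limit; (ii) its value is independent of $\alpha \in \mathcal{P}$; (iii) the supremum over $\eta \in \mathcal{P}^u$ in the definition of $h_\mu^u(f)$ is attained by every such $\eta$. My plan hinges on the structural fact that each $\eta \in \mathcal{P}^u$ is an \emph{increasing} partition: since $\eta(x) = \beta(x) \cap W_\loc^u(x)$ is a local unstable piece of diameter at most $\e_0$ and $f$ uniformly expands $E^u$, we have $\eta(fx) \subset f(\eta(x))$, i.e.\ $f\eta \leq \eta$, or equivalently $f^{-1}\eta \geq \eta$. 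This will play the role that $f$-invariance of the conditioning partition plays in the classical Kolmogorov--Sinai theory.

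My main move will be to identify the limit with the Ledrappier--Young entropy $H_\mu(\eta|f\eta) = H_\mu(f^{-1}\eta|\eta)$. Because $f^{-1}\eta \geq \eta$, a telescoping application of the chain rule together with $f$-invariance gives $H_\mu(f^{-n}\eta|\eta) = n\,H_\mu(f^{-1}\eta|\eta)$ exactly, so $\frac{1}{n}H_\mu(f^{-n}\eta|\eta) = H_\mu(f^{-1}\eta|\eta)$ for every $n$. The key step is then to prove
$$
\lim_{n\to\infty}\frac{1}{n}H_\mu(\alpha_0^{n-1}|\eta) = H_\mu(f^{-1}\eta|\eta).
$$
To do this I would compare the two partitions $\alpha_0^{n-1}\vee\eta$ and $f^{-(n-1)}\eta$ on each $\eta$-atom: both partition the local unstable piece into sub-cells of diameter comparable to $e^{-(n-1)\lambda}\e_0$ (where $\lambda>0$ is the expansion rate on $E^u$), and I would argue that a single atom of one partition intersects at most boundedly many atoms of the other, so that $|H_\mu(\alpha_0^{n-1}|\eta) - H_\mu(f^{-(n-1)}\eta|\eta)|$ is uniformly bounded in $n$.

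With this identification in place, both (ii) and (iii) should follow formally: the right-hand side $H_\mu(f^{-1}\eta|\eta)$ does not involve $\alpha$, which gives (ii). For (iii), I would take any $\eta_1,\eta_2 \in \mathcal{P}^u$, form the common refinement $\eta_{12} = \eta_1 \vee \eta_2$ (still subordinate to unstable manifolds, up to a $\mu$-null set), and invoke the Ledrappier--Young result from \cite{LY2} that $H_\mu(\eta|f\eta)$ depends only on the unstable foliation and not on the particular increasing subordinate partition. The main obstacle I foresee is the leaf-wise partition comparison sketched above: one must quantify, uniformly in $x$, how well the iterates $\alpha_0^{n-1}$ match the shrinking sequence $f^{-(n-1)}\eta(x)$ inside an unstable leaf. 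This reduces to a bounded-distortion estimate for $Df|_{E^u}$ and the $\mu$-negligibility of $\partial\alpha$ (which may be arranged within $\mathcal{P}$ by standard approximation). Once this is established, the full chain of equalities in Theorem \ref{ThmA} follows at once.
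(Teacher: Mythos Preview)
The paper does not prove this theorem; it is quoted from \cite{HHW} (Theorem~A and Corollary~A.2 there), so there is no in-paper proof to compare against. That said, your proposed argument has a genuine gap at its first structural step.

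Your claim that every $\eta\in\mathcal{P}^u$ is increasing, i.e.\ $f^{-1}\eta\ge\eta$, is false. Recall $\eta(x)=\beta(x)\cap W^u_\loc(x)$ for some finite partition $\beta\in\mathcal{P}$. The containment $\eta(fx)\subset f(\eta(x))$ would require $\beta(fx)\cap W^u_\loc(fx)\subset f(\beta(x))$; while $W^u_\loc(fx)\subset f(W^u_\loc(x))$ does hold by unstable expansion, there is no reason whatsoever for the element $\beta(fx)$ to be contained in $f(\beta(x))$, since $\beta$ is an \emph{arbitrary} finite partition of small diameter with no dynamical compatibility. The increasing partitions $\xi_i$ of Ledrappier--Young are constructed by a quite different (and delicate) procedure precisely to enforce $f^{-1}\xi\ge\xi$; the partitions in $\mathcal{P}^u$ are deliberately more elementary and do \emph{not} enjoy this property. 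Consequently your telescoping identity $H_\mu(f^{-n}\eta|\eta)=n\,H_\mu(f^{-1}\eta|\eta)$ is not available, and the reduction to $H_\mu(\eta|f\eta)$ collapses.

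Two further issues: (a) Theorem~\ref{ThmA} is stated for $C^1$ diffeomorphisms, whereas both the Ledrappier--Young independence result you invoke and the bounded-distortion comparison you sketch require $C^{1+\alpha}$; the whole point of the definition in \cite{HHW} is that it makes sense and the theorem holds in the $C^1$ category. (b) Even granting $C^{1+\alpha}$, your leafwise comparison between $\alpha_0^{n-1}\vee\eta$ and $f^{-(n-1)}\eta$ is not ``bounded multiplicity'': atoms of $\alpha_0^{n-1}$ on a leaf are $(n,\e_0)$ Bowen balls in the $d^u_n$ metric, while atoms of $f^{-(n-1)}\eta$ are preimages of pieces $\beta(f^{n-1}x)\cap W^u_\loc(f^{n-1}x)$, and these two families need not be uniformly comparable because $\beta$ is unrelated to $\alpha$. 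The actual proof in \cite{HHW} proceeds instead by showing $h_\mu(f,\alpha|\eta)=H_\mu(\alpha^u|f\alpha^u)$ directly via a martingale/Pinsker-type argument on the leaf, without ever asserting that $\eta$ is increasing.
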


\medskip
Unstable metric entropy is closely related to the entropy introduced by Ledrappier and Young (\cite{LY2}). Suppose that $f$ is $C^{1+\a} (\a>0)$ and $\mu$ is ergodic.
Recall a hierarchy of metric entropies
$h_\mu(f, \xi_i):=H_\mu(\xi_i|f\xi_i)$ introduced by Ledrappier and Young
in \cite{LY2},
where $i=1, \cdots, \tilde u$, and $\tilde u$
is the number of distinct positive Lyapunov exponents.
For each $i$, $\xi_i$ is an increasing partition subordinate to
the $i$th level of the unstable leaves $W^{(i)}$, and is a generator.
It is proved there that $h_{\mu}(f, \xi_{\tilde u})=h_\mu(f)$, the
metric entropy of $\mu$.
If there are $u (1\leq u\leq \tilde u)$ distinct Lyapunov exponents on unstable subbundle,
then the $u$th unstable foliation is exactly the unstable foliation of the
partially hyperbolic system $f$.
It is shown in \cite{HHW} that the unstable metric entropy $h_\mu^u(f)$ is identical
to $h_\mu(f, \xi_u)$ given by Ledrappier-Young. We remark that our definition of unstable metric entropy only requires $f$ to be $C^1$, while the definition and results by Ledrappier-Young requires the $C^{1+\a}$-regularity of $f$.

%Denote by $\Q^u$ the set of increasing partitions $\xi_u$
%that are subordinate to $W^{u}$, and are generators, that is, partitions $\xi_u$ satisfying condition~(i)-(iii)
%in Lemma.

Another notion introduced in \cite{HHW} is the unstable topological entropy $h^u_{\text{top}}(f)$.
As a generalization, we define the unstable topological pressure associated with a potential $\varphi \in C(M, \mathbb{R})$ as follows.
Denote by $d^u$ the metric induced by the Riemannian structure on the unstable manifold and let
$d^u_{n}(x,y)=\max _{0 \leq j \leq n-1}d^u(f^j(x),f^j(y))$.
Let $W^u(x,\delta)$ denote the open ball inside $W^u(x)$ with center $x$ and radius $\delta$ with respect to $d^u$. Let $E$ be a set of points in $\overline{W^u(x,\delta)}$ with pairwise $d^u_{n}$-distances at least $\epsilon$. We call $E$ an \emph{$(n,\epsilon)$ u-separated subset} of $\overline{W^u(x,\delta)}$. Put
\begin{equation*}
\begin{aligned}
P^u(f,\varphi, \epsilon,n,x,\delta):=\sup\{&\sum_{y\in E}\exp((S_n\varphi)(y))|\\
&E \text{\ is an\ } (n, \epsilon) \text{\ u-separated subset of\ }\overline{W^u(x,\delta)}\}
\end{aligned}
\end{equation*}
where $(S_n\varphi)(y)=\sum_{i=0}^{n-1}\varphi^i(y)$.
\begin{definition}\label{unstableentropy1}
We define \emph{unstable topological pressure} of $f$ with respect to \emph{the potential} $\varphi$ on $M$ to be
\begin{equation*}
\begin{aligned}
P^u(f, \varphi)&:=\lim_{\delta \to 0}\sup_{x\in M}P^u(f, \varphi, \overline{W^u(x,\delta)}),
\end{aligned}
\end{equation*}
where
\begin{equation*}
\begin{aligned}
P^u(f, \varphi, \overline{W^u(x,\delta)})&:=\lim_{\epsilon \to 0}\limsup_{n\to \infty}\frac{1}{n}\log P^u(f,\varphi, \epsilon,n,x,\delta).
\end{aligned}
\end{equation*}
\end{definition}

Two alternative ways to define unstable topological pressure are by using $(n,\epsilon)$ u-spanning sets and by using open covers. We discuss it in details in Section 2. Note that when $\varphi=0$, the unstable topological pressure reduces to the unstable topological entropy.

Let $\mathcal{M}_f(M)$ and $\mathcal{M}^e_f(M)$ denote
the set of all $f$-invariant and ergodic probability measures on $M$
respectively. Our first main result is the variational principle relating unstable topological pressure and
unstable metric pressure, the sum of unstable metric entropy and integral of the potential.

\begin{TheoremA}\label{TheoremA}
Let $f: M \to M$ be a $C^1$ partially hyperbolic diffeomorphism. Then for any $\varphi\in C(M, \mathbb{R})$,
\[P^u(f, \varphi)=\sup \Big\{h_\mu^u(f)+\int_M \varphi d\mu: \mu \in \mathcal{M}_f(M)\Big\}.\]
Moreover,
\[P^u(f, \varphi)=\sup \Big\{h_\mu^u(f)+\int_M \varphi d\mu: \mu \in \mathcal{M}^e_f(M)\Big\}.\]
\end{TheoremA}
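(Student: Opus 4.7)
The strategy mirrors Walters' proof of the classical variational principle, replacing every entropy quantity by its conditional analogue relative to a partition $\eta \in \P^u$ subordinate to the unstable foliation, and every separated-set sum by its counterpart on the local unstable disks that define $P^u$. Two inequalities must be established, followed by an ergodic refinement.

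For the upper bound $h^u_\mu(f) + \int \varphi \, d\mu \le P^u(f, \varphi)$: fix $\mu \in \mathcal{M}_f(M)$, a small $\e > 0$, and $\a \in \P$ with $\diam(\a) < \e$ and $\mu(\partial \a) = 0$; let $\eta \in \P^u$ be its partner. Theorem 1.2 gives $h^u_\mu(f) = \lim_n \frac{1}{n} H_\mu(\a_0^{n-1}|\eta)$. A conditional Jensen estimate on each plaque $\eta(x)$ yields
\[
-\sum_B \mu_x^\eta(B)\log\mu_x^\eta(B) + \int_{\eta(x)} S_n\varphi \, d\mu_x^\eta \le \log\sum_B \exp\bigl(\sup_{B\cap\eta(x)} S_n\varphi\bigr),
\]
with $B$ ranging over the atoms of $\a_0^{n-1}$ meeting $\eta(x)$. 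Since each such atom has $d^u_n$-diameter at most $\e$, picking one representative per atom produces a subset of $\eta(x)$ which---at the cost of a bounded multiplicity governed by the Lebesgue number of $\a$---can be dominated by a maximal $(n,\e/2)$ u-separated set in a surrounding disk $\overline{W^u(x,\delta)}$. Integrating in $\mu$, dividing by $n$, and letting $n\to\infty$ and then $\e, \delta \to 0$ (the sup-to-value gap being absorbed into the modulus of continuity of $\varphi$) delivers the inequality.

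For the lower bound and ergodic version: for each $n$, choose $x_n \in M$ and a maximal $(n,\e)$ u-separated $E_n \subset \overline{W^u(x_n,\delta)}$ whose weighted sum approximates $P^u(f,\varphi,\e,n,x_n,\delta)$, and form
\[
\sigma_n = \Bigl(\sum_{y\in E_n} e^{S_n\varphi(y)}\Bigr)^{-1}\sum_{y\in E_n} e^{S_n\varphi(y)}\delta_y,\qquad \mu_n = \frac{1}{n}\sum_{i=0}^{n-1} f^i_*\sigma_n.
\]
An $f$-invariant weak-$*$ accumulation point $\mu$ is extracted along a subsequence realising the relevant $\limsup$. Fix $\a \in \P$ with $\diam(\a)<\e/2$ and $\mu(\partial \a)=0$; distinct points of $E_n$ then lie in distinct atoms of $\a_0^{n-1}$, giving $H_{\sigma_n}(\a_0^{n-1}) + \int S_n\varphi \, d\sigma_n = \log \sum_{y\in E_n} e^{S_n\varphi(y)}$. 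A Misiurewicz-type block decomposition of $[0,n)$ into intervals of length $q$ combined with concavity of entropy converts this into a lower bound for $\tfrac{1}{q}H_{\mu_n}(\a_0^{q-1}|\eta) + \int \varphi \, d\mu_n$, which survives the weak-$*$ limit thanks to $\mu(\partial \a)=0$. Letting $q \to \infty$ and invoking Theorem 1.2, then $\e, \delta \to 0$, gives $h^u_\mu(f) + \int \varphi \, d\mu \ge P^u(f,\varphi)$. The ergodic refinement follows by ergodic decomposition together with the affinity of $\mu \mapsto h^u_\mu(f)$ proved in \cite{HHW}.

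The principal obstacle is the entropy transfer in the lower bound: the $\sigma_n$ are supported on a single unstable plaque, yet one needs to pass entropy information into $H_{\mu_n}(\cdot|\eta)$ for a partition $\eta\in\P^u$ adapted to the limit measure $\mu$, and then into $h^u_\mu(f)$. Making this rigorous requires the flexibility of Theorem 1.2 (which lets us pair any finite partition $\a$ with a suitable $\eta$), the boundary condition $\mu(\partial \a)=0$ to transport conditional entropies through the weak-$*$ limit, and the upper semi-continuity of $\mu \mapsto h^u_\mu(f)$ established in \cite{HHW}; the conditional Misiurewicz step---where entropy on a single plaque must become conditional entropy for the averaged $\mu_n$---is the main technical hurdle.
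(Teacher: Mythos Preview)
Your lower bound and the ergodic refinement follow the paper's route almost exactly: the paper also builds $\nu_n$ (your $\sigma_n$) supported on a single unstable disk, averages to $\mu_n$, and runs a Misiurewicz block decomposition. The paper's resolution of the hurdle you flag is worth recording: it chooses $\eta\in\P^u$ with $W^u(x,\delta)\subset\eta(x)$, so that $H_{\nu_n}(\a_0^{n-1}|\eta)=H_{\nu_n}(\a_0^{n-1})$ trivially; then, in the telescoping step, the varying conditioning partitions $f^{rq+j}\eta$ are all coarser than the \emph{fixed} partition $f\a^u$ (where $\a^u(y)=\a(y)\cap W^u_\loc(y)$), which lets every block term be bounded by $H_{f^{rq+j}\nu_n}(\a_0^{q-1}\mid f\a^u)$. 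Averaging gives $H_{\mu_n}(\a_0^{q-1}\mid f\a^u)$, and upper semi-continuity plus Theorem~\ref{ThmA} finish. Your sketch lands at $H_{\mu_n}(\a_0^{q-1}|\eta)$ with the original $\eta$, but that is not what the block decomposition actually produces; the passage to a fixed element of $\P^u$ is the missing detail.

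Your upper bound, however, genuinely diverges from the paper and has a gap. The paper does \emph{not} use a plaque-wise Jensen inequality on partition atoms. Instead it first reduces to ergodic $\mu$ via affinity and the ergodic decomposition, and then invokes the Shannon--McMillan--Breiman type formula $h_\mu(f|\eta)=\lim_n -\tfrac{1}{n}\log\mu_x^\eta(B_n^u(x,\e))$ (Lemma~\ref{Cpointwise}) together with Birkhoff for $\varphi$; this produces, on a set of large $\mu_x^\eta$-measure, pointwise control of $\mu_x^\eta(B_n^u(y,\e))$ and $S_n\varphi(y)$, which feeds directly into a spanning-set estimate via Lemma~\ref{convex}. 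Your alternative route stalls at the ``bounded multiplicity'' claim: representatives of distinct atoms of $\a_0^{n-1}$ do not form a $u$-separated set, and a Bowen ball $B_n^u(z,\e/2)$ typically meets exponentially many (not boundedly many) atoms of $\a_0^{n-1}$, so the weighted atom-sum $\sum_B e^{\sup_B S_n\varphi}$ is not dominated by a separated-set sum up to a bounded factor. The classical fix (compact subsets of atoms so that small balls meet at most two elements, plus the power trick $f\mapsto f^m$) is available but is neither what you wrote nor what the paper does; the SMB-based argument bypasses the issue entirely.
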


As an immediate corollary, we recover the variational principle for unstable entropies, obtained in \cite{HHW}.
\begin{CorollaryA1}\label{AA1}
Let $f: M \to M$ be a $C^1$ partially hyperbolic diffeomorphism. Then
$$h^u_{\text{top}}(f)=\sup\{h_{\mu}^u(f): \mu \in \mathcal{M}_f(M)\}.
$$
Moreover,
$$
h^u_{\text{top}}(f)=\sup\{h_{\nu}^u(f): \nu \in \mathcal{M}^e_f(M)\}.
$$
\end{CorollaryA1}

Let $P(f, \varphi)$ be the classical topological pressure of $f$ associated to potential $\varphi$ (cf. Chapter 9 in \cite{W82}). By the definition of $P^u(f, \varphi)$ and Theorem A, we have the following facts.
\begin{CorollaryA2}\label{AA2}
$P^u(f, \varphi)\leq P(f, \varphi)$.

If $f$ is $C^{1+\alpha}$, the equation holds if there is no positive Lyapunov exponent in the
center direction at $\nu$-a.e. with respect to any ergodic measure $\nu$.
\end{CorollaryA2}

The variational principle Theorem A gives a natural way of selecting members of $\mathcal{M}_f(M)$.
The following concept of u-equilibrium state generalizes measure of maximal unstable entropy.

\begin{definition}\label{u-equilibrium}
Let $\varphi \in C(M, \mathbb{R})$. A member $\mu$ of $\mathcal{M}_f(M)$ is called a \emph{u-equilibrium state for $\varphi$} if $P^u(f, \varphi)=h_\mu^u(f)+\int \varphi d\mu$. Let $\mathcal{M}^u_\varphi(M,f)$ denote the set of all u-equilibrium states for $\varphi$.
\end{definition}

A measure of maximal unstable entropy is a u-equilibrium state for the potential $0$. A significant result in \cite{HHW} is that the unstable metric entropy function is upper semicontinuous (cf. Proposition 2.15 in \cite{HHW} which is restated in Lemma \ref{uppersemicontinuous} below). Therefore, a u-equilibrium state should always exist for partially hyperbolic diffeomorphisms. Furthermore, $\mathcal{M}^u_\varphi(M,f)$ has the following nontrivial properties.
\begin{TheoremB}
\begin{enumerate}
  \item $\mathcal{M}^u_\varphi(M,f)$ is convex.
  \item $\mathcal{M}^u_\varphi(M,f)$ is nonempty and compact.
  \item The extreme points of $\mathcal{M}^u_\varphi(M,f)$ are precisely
  the ergodic members of $\mathcal{M}^u_\varphi(M,f)$.
  \item If $\varphi, \psi \in C(M, \mathbb{R})$, and there exists $c\in \mathbb{R}$ such that $\varphi-\psi-c$ belongs to the closure of the set
  $\{h\circ f-h: h\in C(M, \mathbb{R})\}$ in $ C(M, \mathbb{R})$, then $\mathcal{M}^u_\varphi(M,f)=\mathcal{M}^u_\psi(M,f)$.
\end{enumerate}
\end{TheoremB}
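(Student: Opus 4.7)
The plan is to treat each item in turn, relying on the variational principle from Theorem A together with two standard structural facts about the map $\mu \mapsto h_\mu^u(f)$: it is affine on $\mathcal{M}_f(M)$, and it is upper semicontinuous (the second is Lemma \ref{uppersemicontinuous}, cited from \cite{HHW}; the first I would verify separately or cite from \cite{HHW}, since $h_\mu^u(f) = h_\mu(f, \xi_u)$ in the sense of Ledrappier–Young and that quantity is known to be affine). Write $F(\mu) := h_\mu^u(f) + \int \varphi\,d\mu$, so $F$ is affine and upper semicontinuous on the convex, weak$^*$-compact set $\mathcal{M}_f(M)$, and $P^u(f,\varphi) = \sup_\mu F(\mu)$ by Theorem A.

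For (1), convexity of $\mathcal{M}^u_\varphi(M,f) = F^{-1}(P^u(f,\varphi))$ is immediate from affinity of $F$. For (2), upper semicontinuity of $F$ gives that the superlevel set $\{F \geq P^u(f,\varphi)\}$ is closed; together with the variational principle this set equals $\mathcal{M}^u_\varphi(M,f)$, which is therefore closed in the compact space $\mathcal{M}_f(M)$, hence compact; upper semicontinuous functions on compact sets attain their supremum, so the set is nonempty.

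For (3), both inclusions use affinity of $F$. If $\mu \in \mathcal{M}^u_\varphi(M,f)$ is ergodic, then it is already an extreme point of $\mathcal{M}_f(M)$ (standard), so a fortiori of the subset $\mathcal{M}^u_\varphi(M,f)$. Conversely, suppose $\mu \in \mathcal{M}^u_\varphi(M,f)$ is an extreme point of this set but is not ergodic, so that it admits a nontrivial convex decomposition $\mu = t\mu_1 + (1-t)\mu_2$ with $\mu_1,\mu_2 \in \mathcal{M}_f(M)$, $t\in(0,1)$. Then
\[
P^u(f,\varphi) = F(\mu) = tF(\mu_1) + (1-t)F(\mu_2),
\]
and since $F(\mu_i) \leq P^u(f,\varphi)$ for $i=1,2$, equality forces $F(\mu_1)=F(\mu_2)=P^u(f,\varphi)$, i.e.\ $\mu_1,\mu_2 \in \mathcal{M}^u_\varphi(M,f)$, contradicting extremality of $\mu$ in this set.

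For (4), the key observation is that any $f$-invariant measure integrates coboundaries to zero, and uniform convergence allows us to pass this through the closure. If $\varphi - \psi - c = \lim_{n\to\infty} (h_n \circ f - h_n)$ uniformly, then for every $\mu \in \mathcal{M}_f(M)$,
\[
\int (\varphi - \psi - c)\,d\mu = \lim_{n\to\infty} \int (h_n \circ f - h_n)\,d\mu = 0,
\]
so $\int \varphi\,d\mu = \int \psi\,d\mu + c$ for all invariant $\mu$. Taking the supremum of $h_\mu^u(f) + \int\varphi\,d\mu$ over $\mathcal{M}_f(M)$ and invoking Theorem A yields $P^u(f,\varphi) = P^u(f,\psi) + c$, and then the identity $F_\varphi(\mu) = F_\psi(\mu) + c$ shows that $\mu$ realizes the supremum for $\varphi$ iff it realizes it for $\psi$, giving the claimed equality of sets.

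The main obstacle I anticipate is justifying the affineness of $\mu \mapsto h_\mu^u(f)$, which underlies parts (1) and (3). One natural route is to pick $\eta \in \mathcal{P}^u$ and $\alpha \in \mathcal{P}$ and argue that $\mu \mapsto H_\mu(\alpha_0^{n-1} \mid \eta)$ is affine, using the integral representation $H_\mu(\alpha_0^{n-1}\mid\eta) = -\int \log \mu_x^\eta(\alpha_0^{n-1}(x))\,d\mu(x)$ and the behavior of conditional measures under convex combinations (essentially the Jacobs/Rokhlin decomposition argument from the classical theory, adapted to the measurable partition $\eta$); then take the limit $n\to\infty$ granted by Theorem \ref{ThmA}. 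The other steps are comparatively routine once this affineness and the upper semicontinuity in Lemma \ref{uppersemicontinuous} are in hand.
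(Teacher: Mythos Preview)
Your proposal is correct and follows essentially the same approach as the paper, which simply cites the affineness of $\mu \mapsto h_\mu^u(f)$ as Lemma~\ref{affine} (Proposition~2.14 in \cite{HHW}) and upper semicontinuity as Lemma~\ref{uppersemicontinuous}, so your anticipated obstacle is already handled by citation. The only cosmetic difference is that for part~(4) the paper derives $P^u(f,\varphi)=P^u(f,\psi)+c$ directly from Proposition~\ref{proppressure}(2),(4),(6) rather than through Theorem~A as you do.
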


\emph{Gibbs u-states} form a special class of invariant probability measures on $M$ whose conditional measures along unstable leaves are absolutely continuous with respect to the Lebesgue measure on the leaves (cf. \cite{PS}, \cite{BV}, see also p. 221 in \cite{BDV}).
In uniformly hyperbolic case, they correspond to Gibbs states, which are the equilibrium states associated to a potential defined by the Jacobian along unstable direction. If $f: M\to M$ is a $C^1$ partially hyperbolic diffeomorphism, there is also a distinguished potential $\varphi^u(x)=-\log \|Df|_{E^u(x)}\|$. We relate the u-equilibrium states associated to $\varphi^u$ to the Gibbs u-states of $f$.
\begin{TheoremC}
Let $f$ be $C^{1+\alpha}$ and $\mu \in \mathcal{M}_f(M)$. Then $\mu$ is a Gibbs u-state of $f$ if and only if $\mu$ is a u-equilibrium state of $\varphi^u$.
\end{TheoremC}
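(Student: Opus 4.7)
My plan is to compute $P^u(f,\varphi^u)$ explicitly (it will turn out to be zero) and then use the variational principle of Theorem A to recognize u-equilibrium states for $\varphi^u$ as exactly the measures that saturate a certain Pesin-type inequality along unstable leaves. The bridge between the formalism of this paper and the classical Pesin--Ledrappier--Young theory is the identification, recalled just before Theorem A, of $h^u_\mu(f)$ with the leafwise entropy $h_\mu(f,\xi)=H_\mu(\xi|f\xi)$ for any increasing measurable partition $\xi$ subordinate to $W^u$.

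The central ingredient I would invoke is the Ledrappier--Young formula in the unstable direction (which is where the $C^{1+\alpha}$ hypothesis is used): for every $\mu\in\mathcal{M}_f(M)$,
\[
h^u_\mu(f) \;\le\; \int \log \|Df|_{E^u}\| \, d\mu \;=\; -\int \varphi^u \, d\mu,
\]
with equality if and only if the conditional measures of $\mu$ on local unstable leaves are absolutely continuous with respect to leafwise Lebesgue, i.e.~if and only if $\mu$ is a Gibbs u-state (see \cite{LY2} and the discussion in \cite{BDV}). Rewritten, $h^u_\mu(f)+\int\varphi^u\,d\mu\le 0$ for every invariant $\mu$, with equality precisely on Gibbs u-states.

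Substituting into Theorem A gives $P^u(f,\varphi^u)\le 0$ at once. For the reverse inequality I would invoke the standard existence of Gibbs u-states for $C^{1+\alpha}$ partially hyperbolic systems, obtained as weak-$*$ accumulation points of $\tfrac{1}{n}\sum_{k=0}^{n-1}(f^k)_* m$ with $m$ normalized Lebesgue measure on a local unstable plaque (cf.~\cite{PS}, \cite{BDV}); any such $\nu_0$ saturates the inequality above, so $h^u_{\nu_0}(f)+\int\varphi^u\,d\nu_0=0$, whence $P^u(f,\varphi^u)=0$. Combined with the equality case, Theorem A then says that $\mu$ is a u-equilibrium state for $\varphi^u$ iff $h^u_\mu(f)+\int\varphi^u\,d\mu=0$ iff $\mu$ is a Gibbs u-state, which is Theorem C.

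The main obstacle is not the short logical architecture but carefully deploying the equality case of the Ledrappier--Young formula \emph{for the unstable partition}: this step hinges on the identification $h^u_\mu(f)=h_\mu(f,\xi)$ from \cite{HHW} to align our entropy with the quantity for which Ledrappier--Young proved their rigidity statement, and on the $C^{1+\alpha}$ hypothesis both to give the absolute-continuity definition of Gibbs u-states its standard meaning and to license the equality case.
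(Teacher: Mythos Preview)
Your approach is correct and essentially identical to the paper's: both hinge on the leafwise Ruelle/Ledrappier--Young inequality $h^u_\mu(f)\le -\int\varphi^u\,d\mu$ with equality precisely on Gibbs u-states (the paper packages this as Lemma~\ref{gibbs}, citing Yang's Proposition~5.2, in place of your direct appeal to \cite{LY2} via the identification $h^u_\mu(f)=h_\mu(f,\xi)$), and then combine it with Theorem~A to obtain $P^u(f,\varphi^u)=0$ and read off the equivalence. The one substantive divergence is that you explicitly invoke the Pesin--Sinai construction to secure $P^u(f,\varphi^u)\ge 0$, whereas the paper leaves this step implicit; your route is fine for Theorem~C itself, but note that importing \cite{PS} here makes the paper's Corollary~C.2 (which purports to \emph{recover} Pesin--Sinai existence from Theorems~B and~C) circular.
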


\begin{CorollaryC1}\label{CC1}
If $f$ is $C^{1+\alpha}$, then $P^u(f, \varphi^u)= 0$.
\end{CorollaryC1}

\begin{CorollaryC2}\label{CC2}
There always exists a Gibbs u-state for any $C^{1+\alpha}$ partially hyperbolic diffeomorphism.
\end{CorollaryC2}
Corollary C.2 which can be easily extended to the partially hyperbolic attractor case, recovers the existence result proved in \cite{PS}.

It is well known that the (classical) topological pressure $P(f, \cdot)$ determines the set $\mathcal{M}_f(M)$ and the entropy $h_\mu(f)$ for all $\mu \in \mathcal{M}_f(M)$, in the sense of Theorems 9.11 and 9.12 in \cite{W82}. We recall the precise meaning as follows. A \emph{finite signed measure} on $M$ is a map $\mu: \mathcal{B}(M) \to \mathbb{R}$ which is countably additive, where $\mathcal{B}(M)$ is the $\sigma$-algebra of Borel subsets of $M$. Then $\mu \in \mathcal{M}_f(M)$ if and only if $\int_M \varphi d\mu \leq P(f, \varphi)$ for $\forall \varphi \in C(M, \mathbb{R}).$ Moreover, $h_\nu(f)=\inf\left\{P(f, \varphi)-\int_M \varphi d\nu: \varphi \in C(M, \mathbb{R})\right\}$ holds if and only if the entropy map $\mu\mapsto h_\mu(f)$ is upper semi-continuous at $\nu$.
Rather surprisingly, the analogue holds for unstable pressure: the unstable topological pressure $P^u(f, \cdot)$ which might be considered as a partial pressure of the system, also determines the set $\mathcal{M}_f(M)$ and the entropy $h^u_\mu(f)$ for all $\mu \in \mathcal{M}_f(M)$. Moreover, we have a cleaner result since the unstable entropy map $\mu\mapsto h^u_\mu(f)$ is always upper semi-continuous.
\begin{TheoremD}
\begin{enumerate}
  \item Let $\mu: \mathcal{B}(M) \to \mathbb{R}$ be a finite signed measure. Then $\mu \in \mathcal{M}_f(M)$ if and only if $\int_M \varphi d\mu \leq P^u(f, \varphi)$, $\forall \varphi \in C(M, \mathbb{R}).$
  \item Let $\nu \in \mathcal{M}_f(M)$. Then
$$h_\nu^u(f)=\inf\left\{P^u(f, \varphi)-\int_M \varphi d\nu: \varphi \in C(M, \mathbb{R})\right\}.$$
\end{enumerate}

\end{TheoremD}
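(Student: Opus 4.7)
The plan is to adapt Walters' classical arguments for topological pressure (Theorems 9.11 and 9.12 of \cite{W82}) to the unstable setting. Throughout, I will use four elementary properties of $P^u(f,\cdot)$ to be established in Section \ref{u-pressure}: monotonicity $\varphi_1\le\varphi_2\Rightarrow P^u(f,\varphi_1)\le P^u(f,\varphi_2)$, translation $P^u(f,\varphi+c)=P^u(f,\varphi)+c$ for constants $c$, coboundary invariance $P^u(f,\varphi+g\circ f-g)=P^u(f,\varphi)$ for every $g\in C(M,\mathbb R)$, and finiteness $P^u(f,0)=h^u_{\text{top}}(f)<\infty$.

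For part (1), the forward direction is immediate from Theorem A combined with $h_\mu^u(f)\ge 0$. For the converse, suppose the signed measure $\mu$ satisfies $\int_M\varphi\,d\mu\le P^u(f,\varphi)$ for every $\varphi\in C(M,\mathbb R)$. To see that $\mu\ge 0$, for any $\varphi\ge 0$ and any $N>0$ the hypothesis gives $-N\int_M\varphi\,d\mu\le P^u(f,-N\varphi)\le P^u(f,0)=h^u_{\text{top}}(f)$; dividing by $-N$ and letting $N\to\infty$ forces $\int_M\varphi\,d\mu\ge 0$. To see $\mu(M)=1$, apply the hypothesis to $\varphi\equiv c$ to obtain $c\,\mu(M)\le c+h^u_{\text{top}}(f)$ for every $c\in\mathbb R$; sending $c\to\pm\infty$ forces $\mu(M)=1$. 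For $f$-invariance, given $g\in C(M,\mathbb R)$ and $n\in\mathbb Z$, coboundary invariance gives $n\int_M(g\circ f-g)\,d\mu\le P^u(f,n(g\circ f-g))=P^u(f,0)=h^u_{\text{top}}(f)$, which as $n$ ranges over all of $\mathbb Z$ forces $\int_M(g\circ f-g)\,d\mu=0$.

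For part (2), the inequality $h_\nu^u(f)\le\inf_\varphi\bigl(P^u(f,\varphi)-\int_M\varphi\,d\nu\bigr)$ is immediate from Theorem A. The plan for the reverse inequality is to invoke convex duality in the dual pair $\bigl(C(M,\mathbb R),\mathcal{M}(M)\bigr)$, where $\mathcal{M}(M)$ is the space of finite signed Borel measures equipped with the weak-$*$ topology. Define $G:\mathcal{M}(M)\to\mathbb R\cup\{+\infty\}$ by $G(\mu)=-h_\mu^u(f)$ for $\mu\in\mathcal{M}_f(M)$ and $G(\mu)=+\infty$ otherwise. Since $\mathcal{M}_f(M)$ is convex and weak-$*$ closed, since $h^u$ is affine on $\mathcal{M}_f(M)$ via the ergodic decomposition, and since $\mu\mapsto h_\mu^u(f)$ is upper semicontinuous by Lemma \ref{uppersemicontinuous}, the function $G$ is proper, convex and weak-$*$ lower semicontinuous. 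Its Fenchel conjugate is
\[G^*(\varphi)=\sup_{\mu}\Bigl(\int_M\varphi\,d\mu-G(\mu)\Bigr)=\sup_{\mu\in\mathcal{M}_f(M)}\Bigl(h_\mu^u(f)+\int_M\varphi\,d\mu\Bigr)=P^u(f,\varphi),\]
by Theorem A. The Fenchel--Moreau theorem then yields $G=G^{**}$; evaluated at $\nu$ this reads $-h_\nu^u(f)=\sup_\varphi\bigl(\int_M\varphi\,d\nu-P^u(f,\varphi)\bigr)$, which is exactly the reverse inequality.

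The main obstacle is the reverse inequality in part (2): everything hinges on the upper semicontinuity of $\mu\mapsto h_\mu^u(f)$, and one must carefully verify the hypotheses of Fenchel--Moreau in the correct topological-vector-space setting. A concrete Walters-style Hahn--Banach separation argument is an equally acceptable alternative: assuming the infimum strictly exceeds $h_\nu^u(f)$, one would separate the point $\bigl(0,h_\nu^u(f)+\epsilon\bigr)\in C(M,\mathbb R)\times\mathbb R$ from the weak-$*$ closed convex hull of $\{(\varphi,h_\mu^u(f)+\int_M\varphi\,d\mu):\mu\in\mathcal{M}_f(M)\}$, and then extract from the separating linear functional an invariant measure contradicting Theorem A. Either route reduces the entire problem to the upper semicontinuity of the unstable entropy map provided by Lemma \ref{uppersemicontinuous}.
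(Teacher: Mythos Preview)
Your proof of part (1) is correct and essentially identical to the paper's, with only cosmetic differences in how positivity is extracted (you use monotonicity $P^u(f,-N\varphi)\le P^u(f,0)$ directly, while the paper writes out the estimate $-P^u(f,-n(\varphi+\epsilon))\ge -h^u_{\text{top}}(f)+n\inf(\varphi+\epsilon)$).

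For part (2), your primary route via Fenchel--Moreau is correct but genuinely different from the paper's. The paper argues concretely: given $b>h_\nu^u(f)$ it forms the convex set $C=\{(\mu,t)\in\mathcal{M}_f(M)\times\mathbb{R}:0\le t\le h_\mu^u(f)\}$ inside $C(M,\mathbb{R})^*\times\mathbb{R}$, uses upper semicontinuity to see $(\nu,b)\notin\overline C$, separates by a linear functional $F(\mu,t)=\int\psi\,d\mu+td$, checks $d>0$, and reads off a single potential $\psi/d$ witnessing $b\ge P^u(f,\psi/d)-\int(\psi/d)\,d\nu$. Your Fenchel--Moreau argument packages exactly the same convex-analytic content (upper semicontinuity $\Rightarrow$ biconjugate equals the function) into one invocation; it is shorter and more conceptual, at the cost of importing a theorem whose proof itself rests on Hahn--Banach separation. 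Note that the ``alternative'' separation you sketch at the end takes place in $C(M,\mathbb{R})\times\mathbb{R}$, whereas the paper separates in $C(M,\mathbb{R})^*\times\mathbb{R}$; the paper's version is the one that directly produces a potential $\psi$ rather than a measure. Either way, the crucial input is the same: upper semicontinuity of $\mu\mapsto h_\mu^u(f)$ from Lemma~\ref{uppersemicontinuous}.
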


As the existence of u-equilibrium state is guaranteed by the upper semicontinuity of the unstable entropy map, it is natural to ask when the u-equilibrium state is unique. This question is very subtle and already attracts a lot of interest in the case of the classical pressure. In this paper, we study the differentiability properties of the unstable pressure and their relations to the uniqueness of u-equilibrium state. Such an approach is developed in \cite{W92} for the classical pressure.

To start with, we define a notion of tangent functional to the convex function $P^u(f, \cdot): C(M, \mathbb{R})\to \mathbb{R}$, which is closely related to the u-equilibrium state. The (classical) tangent functional can be found in Definition 9.9 in \cite{W82}.
\begin{definition}
Let $\varphi \in C(M, \mathbb{R})$. A \emph{u-tangent functional to $P^u(f, \cdot)$ at $\varphi$} is a finite signed measure $\mu: \mathcal{B}(M) \to \mathbb{R}$ such that
$$P^u(f, \varphi+\psi)-P^u(f, \varphi)\geq \int_M \psi d\mu,\quad \forall \psi\in C(M, \mathbb{R}).$$
Let $t_\varphi^u(M, f)$ denote the set of all u-tangent functionals to $P^u(f, \cdot)$ at $\varphi$.
\end{definition}

For the classical tangent functional and equilibrium state, the equality $\mathcal{M}_\varphi(M,f)=t_\varphi(M, f)$ holds under the assumption that $\mu\mapsto h_\mu(f)$ is upper semi-continuous at the members of $t_\varphi(M, f)$ (cf. Theorem 9.15 in \cite{W82}). The assumption is unnecessary for the u-tangent functional:
\begin{TheoremE}\label{equal}
$\mathcal{M}^u_\varphi(M,f)=t_\varphi^u(M, f).$
\end{TheoremE}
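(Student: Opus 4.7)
The plan is to prove the two inclusions separately, adapting the classical Walters-style argument (cf.\ Theorem~9.15 in \cite{W82}) while exploiting the fact that $\mu\mapsto h_\mu^u(f)$ is automatically upper semi-continuous, so that no extra hypothesis is needed. The inclusion $\mathcal{M}^u_\varphi(M,f)\subseteq t_\varphi^u(M,f)$ is immediate from Theorem~A: if $\mu$ is a u-equilibrium state for $\varphi$, then for every $\psi\in C(M,\mathbb{R})$,
$$
P^u(f,\varphi+\psi)\ \geq\ h_\mu^u(f)+\int_M(\varphi+\psi)\,d\mu\ =\ P^u(f,\varphi)+\int_M\psi\,d\mu,
$$
which is the u-tangent condition.

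For the reverse inclusion, let $\mu\in t_\varphi^u(M,f)$; the first task is to verify $\mu\in\mathcal{M}_f(M)$. I would extract positivity and normalization by substituting $\psi=n\chi-\varphi$ (for arbitrary $\chi\in C(M,\mathbb{R})$ and $n\in\mathbb{N}$) into the tangent inequality, obtaining
$$
\int_M \chi\, d\mu\ \leq\ \frac{1}{n}P^u(f,n\chi)+\frac{1}{n}\Big[\int_M\varphi\, d\mu-P^u(f,\varphi)\Big].
$$
Theorem~A gives the a priori bound $P^u(f,n\chi)\leq h_{\text{top}}^u(f)+n\sup\chi$, so letting $n\to\infty$ yields $\int\chi\, d\mu\leq\sup\chi$; applied to $-\chi$ this gives $\inf\chi\leq\int\chi\, d\mu$. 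Testing with $\chi\equiv 1$ forces $\mu(M)=1$, and testing with nonnegative $\chi$ forces positivity, so $\mu$ is a Borel probability measure. For $f$-invariance I would invoke the cohomology invariance $P^u(f,\varphi+h-h\circ f)=P^u(f,\varphi)$ valid for every $h\in C(M,\mathbb{R})$, which follows from $|S_n(h-h\circ f)|\leq 2\|h\|_\infty$ together with the definition of $P^u$ via $(n,\epsilon)$ u-separated sets. Applying the tangent inequality with $\psi=h-h\circ f$ and with $\psi=-(h-h\circ f)$ then yields $\int h\, d\mu=\int h\circ f\, d\mu$ for every $h$, so $\mu\in\mathcal{M}_f(M)$.

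With $\mu\in\mathcal{M}_f(M)$ in hand, the u-equilibrium property follows by substituting $\psi=\chi-\varphi$ (for arbitrary $\chi\in C(M,\mathbb{R})$) into the tangent inequality, which gives
$$
P^u(f,\varphi)-\int_M\varphi\, d\mu\ \leq\ P^u(f,\chi)-\int_M\chi\, d\mu.
$$
Since this holds for every $\chi$, infimizing on the right and applying Theorem~D(2) identifies the infimum as $h_\mu^u(f)$, producing $P^u(f,\varphi)-\int\varphi\, d\mu\leq h_\mu^u(f)$; the reverse inequality is the variational principle, so $\mu\in\mathcal{M}^u_\varphi(M,f)$.

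The delicate point is showing that $\mu$ is actually a probability measure, not merely a signed one; this rests on the two auxiliary facts $P^u(f,n\chi)\leq h_{\text{top}}^u(f)+n\sup\chi$ and the cohomology invariance of $P^u$, both of which should be routine properties of unstable pressure established in Section~\ref{u-pressure}. Once that is settled, the identification as a u-equilibrium state hinges entirely on Theorem~D(2), whose own proof exploits the upper semi-continuity of $h_\mu^u(f)$.
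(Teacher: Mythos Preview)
Your proposal is correct and follows essentially the same route as the paper: both use the variational principle for the easy inclusion and then, for the reverse inclusion, rewrite the tangent inequality as $P^u(f,\varphi)-\int\varphi\,d\mu\le P^u(f,\chi)-\int\chi\,d\mu$ for all $\chi$ and invoke Theorem~D(2). Your argument is in fact slightly more complete, since you explicitly verify that a u-tangent functional lies in $\mathcal{M}_f(M)$ (via the scaling and cohomology arguments, which mirror the proof of Theorem~D(1)), a step the paper's proof passes over without comment before applying Theorem~D(2).
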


In the following, we consider two types of differentiability of unstable pressure.
\begin{definition}\label{Gateaux}
The unstable topological pressure $P^u(f, \cdot): C(M, \mathbb{R})\to \mathbb{R}$ is said to be \emph{Gateaux differentiable at $\varphi$} if
$$\lim_{t\to 0}\frac{1}{t}(P^u(f, \varphi+t\psi)-P^u(f, \varphi))$$
exists for any $\psi\in C(M, \mathbb{R})$.
\end{definition}

\begin{TheoremF}
$P^u(f, \cdot)$ is Gateaux differentiable at $\varphi$ if and only if there is a unique unstable tangent functional to $P^u(f, \cdot)$ at $\varphi$.
\end{TheoremF}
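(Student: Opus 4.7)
The plan is to apply a standard convex-analytic argument, treating $P^u(f,\cdot)$ as a convex Lipschitz function on the Banach space $C(M,\mathbb{R})$. First I would verify the two structural facts I will rely on throughout: (i) $P^u(f,\cdot)$ is convex, which follows directly from Theorem A since it is a supremum of the affine maps $\varphi \mapsto h_\mu^u(f)+\int\varphi\,d\mu$; and (ii) $P^u(f,\cdot)$ is $1$-Lipschitz in the supremum norm, which follows from $P^u(f,\varphi+c)=P^u(f,\varphi)+c$ for constants $c$ together with monotonicity in $\varphi$. Convexity guarantees that for each fixed $\psi$, the difference quotient $t\mapsto \frac{1}{t}\bigl(P^u(f,\varphi+t\psi)-P^u(f,\varphi)\bigr)$ is nondecreasing in $t$, so the one-sided derivatives
\[
D_\pm P^u(\varphi;\psi):=\lim_{t\to 0^\pm}\frac{P^u(f,\varphi+t\psi)-P^u(f,\varphi)}{t}
\]
always exist, with $D_-P^u(\varphi;\psi)\le D_+P^u(\varphi;\psi)$, and Gateaux differentiability at $\varphi$ is precisely the statement $D_-P^u(\varphi;\psi)=D_+P^u(\varphi;\psi)$ for every $\psi$.

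Next, for any u-tangent functional $\mu$ at $\varphi$ I would show
\[
D_-P^u(\varphi;\psi)\ \le\ \int\psi\,d\mu\ \le\ D_+P^u(\varphi;\psi)\quad\text{for all }\psi\in C(M,\mathbb{R}),
\]
by applying the defining inequality $P^u(f,\varphi+t\psi)-P^u(f,\varphi)\ge\int t\psi\,d\mu$ with $t>0$ and then with $-t$, and letting $t\to 0^+$. This immediately yields the forward implication: if $P^u(f,\cdot)$ is Gateaux differentiable at $\varphi$, the two bounds coincide, so $\int\psi\,d\mu$ is uniquely determined for every $\psi\in C(M,\mathbb{R})$; by the Riesz representation theorem, $\mu$ is the unique tangent functional.

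For the converse I would argue by contrapositive and use the Hahn--Banach theorem, which is the main technical step. Assume there exists $\psi_0$ with $D_-P^u(\varphi;\psi_0)<D_+P^u(\varphi;\psi_0)$. Note that the map $q(\psi):=D_+P^u(\varphi;\psi)$ is sublinear: positive homogeneity is clear, and subadditivity follows from the convexity of $P^u(f,\cdot)$ applied along the segment joining $\varphi+t\psi$ and $\varphi+t\psi'$. For each $\alpha\in[D_-P^u(\varphi;\psi_0),D_+P^u(\varphi;\psi_0)]$, define a linear functional $L_\alpha$ on $\mathbb{R}\psi_0$ by $L_\alpha(t\psi_0)=t\alpha$; a short check using $q(-\psi_0)=-D_-P^u(\varphi;\psi_0)$ shows $L_\alpha\le q$ on $\mathbb{R}\psi_0$. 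By Hahn--Banach, extend $L_\alpha$ to a linear functional $\mu_\alpha$ on $C(M,\mathbb{R})$ with $\mu_\alpha(\psi)\le q(\psi)$ for all $\psi$. The Lipschitz bound on $P^u(f,\cdot)$ gives $q(\psi)\le\|\psi\|_\infty$ and $q(-\psi)\le\|\psi\|_\infty$, so $\mu_\alpha$ is continuous and hence a finite signed measure by Riesz. Since monotonicity of the difference quotient yields $q(\psi)\le P^u(f,\varphi+\psi)-P^u(f,\varphi)$, each $\mu_\alpha$ is a u-tangent functional. Choosing $\alpha=D_-P^u(\varphi;\psi_0)$ and $\alpha=D_+P^u(\varphi;\psi_0)$ produces two distinct tangent functionals, contradicting uniqueness.

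The main obstacle is the converse direction: one must carefully construct the two tangent functionals via Hahn--Banach and verify that the resulting bounded linear functionals really satisfy the tangent inequality of Definition~\ref{Gateaux} globally (not just dominated by $q$ infinitesimally). All ingredients needed — convexity, Lipschitz continuity, and the passage between $D_\pm P^u$ and the tangent inequality — depend only on properties of $P^u(f,\cdot)$ established from Theorem A and the definition, so no additional input about the partially hyperbolic structure is required beyond what the excerpt has already provided.
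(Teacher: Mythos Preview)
Your proposal is correct and follows essentially the same convex-analytic route as the paper: one-sided directional derivatives of the convex function $P^u(f,\cdot)$, the sandwich $D_-\le \int\psi\,d\mu\le D_+$ for tangent functionals, and a Hahn--Banach extension from the line $\mathbb{R}\psi_0$ dominated by the sublinear functional to produce a tangent functional achieving any value in $[D_-,D_+]$. The paper packages the Hahn--Banach step as a separate lemma asserting $d^+P^u(f,\varphi)(\psi)=\max\{\int\psi\,d\mu:\mu\in t_\varphi^u(M,f)\}$ and then deduces both directions from it, whereas you run the contrapositive directly; but the content is the same.
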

Combining Theorems E and F, we have
\begin{CorollaryF1}\label{CF1}
$P^u(f, \cdot)$ is Gateaux differentiable at $\varphi$ if and only if there is a unique u-equilibrium state of $\varphi$.
\end{CorollaryF1}

Now we consider the Fr\'{e}chet differentiability of unstable topological pressure.

\begin{definition}\label{frechet}
$P^u(f, \cdot): C(M, \mathbb{R})\to \mathbb{R}$ is said to be \emph{Fr\'{e}chet differentiable at $\varphi$} if $\exists \gamma \in C(M, \mathbb{R})^*$ such that
$$\lim_{\psi \to 0}\frac{|P^u(f, \varphi+\psi)-P^u(f, \varphi)-\gamma(\psi)|}{\|\psi\|}=0.$$
\end{definition}

Let $\mu_n \to \mu$ denote the convergence in weak$^*$ topology, and $\|\mu_n-\mu\|\to 0$ the convergence in norm topology on $\mathcal{M}_f(M)$. We have the following equivalent ways to describe Fr\'{e}chet differentiability of $P^u(f, \cdot)$.
\begin{TheoremG}
The following statements are mutually equivalent.\begin{enumerate}
\item $P^u(f, \cdot)$ is Fr\'{e}chet differentiable at $\varphi$.
\item  There is a measure $\mu_\varphi\in \mathcal{M}_f(M)$ such that whenever $(\mu_n)\subset \mathcal{M}_f(M)$ with $h_{\mu_n}^u(f)+\int_M \varphi d\mu_n\to P^u(f, \varphi)$ we have $\|\mu_n -\mu_\varphi\|\to 0$ as $n\to \infty.$
\item $t^u_\varphi(M,f)$ consists of one member $\mu_\varphi$ and
$$P^u(f, \varphi)>\sup\Big\{h_{\mu}^u(f)+\int_M \varphi d\mu: \mu \text{\ is ergodic and\ }\mu \neq \mu_\varphi\Big\}.$$
\item $t^u_\varphi(M,f)$ consists of one member $\mu_\varphi$ and there is a weak$^*$ neighborhood $V$ of $\mu_\varphi$ such that
$$h_{\mu_\varphi}^u(f)>\sup\{h_{\mu}^u(f): \mu\in V \text{\ is ergodic and\ }\mu \neq \mu_\varphi\}.$$
\item $P^u(f, \cdot)$ is affine on a neighborhood of $\varphi.$
\item $t^u_\varphi(M,f)$ consists of one member $\mu_\varphi$ and $\sup\{\|\mu-\mu_\varphi\|: \mu \in t^u_{\varphi+\psi}(M,f )\}\to 0$ as $\psi \to 0.$
\item $t^u_\varphi(M,f)$ consists of one member $\mu_\varphi$ and $\inf\{\|\mu-\mu_\varphi\|: \mu \in t^u_{\varphi+\psi}(M,f )\}\to 0$ as $\psi \to 0.$
\end{enumerate}
\end{TheoremG}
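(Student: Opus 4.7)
The plan is to establish the seven equivalences by a cyclic chain of implications, relying on two pillars from earlier sections: Theorem E, which identifies $t^u_\varphi(M,f)$ with the set $\mathcal{M}^u_\varphi(M,f)$ of u-equilibrium states, and the upper semi-continuity of $\mu\mapsto h^u_\mu(f)$ (Lemma~\ref{uppersemicontinuous}), which guarantees that weak$^*$ limits of approximate u-equilibria are themselves u-equilibria. Combined with the $1$-Lipschitz convexity of $P^u(f,\cdot)\colon C(M,\mathbb{R})\to\mathbb{R}$ and the variational principle of Theorem A, most of the argument reduces to convex analysis in $C(M,\mathbb{R})$, with dynamical input entering at a few crucial points.

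First I would dispose of $(1)\Leftrightarrow(2)$ and the parametric block $(2)\Leftrightarrow(6)\Leftrightarrow(7)$. For $(1)\Rightarrow(2)$, Fr\'echet differentiability implies Gateaux differentiability, so by Theorem F the u-tangent functional is unique; by Theorem E it is a u-equilibrium state $\mu_\varphi\in\mathcal{M}_f(M)$. If some approximate equilibrium sequence $\mu_n$ satisfied $\|\mu_n-\mu_\varphi\|\geq\varepsilon$ for infinitely many $n$, one picks unit functions $\psi_n$ witnessing $\int_M\psi_n\,d(\mu_n-\mu_\varphi)\geq\varepsilon/2$ and applies the Fr\'echet definition to $t\psi_n$ for small $t$, contradicting the convex estimate $P^u(f,\varphi+t\psi_n)-P^u(f,\varphi)\geq t\int_M\psi_n\,d\mu_n-\varepsilon_n$ with $\varepsilon_n\to 0$. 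The converse $(2)\Rightarrow(1)$ is by contradiction: a failure of Fr\'echet differentiability gives $\psi_n\to 0$ along which the quotient stays bounded away from zero, and u-equilibria for $\varphi+\psi_n$ (which exist by Theorem B(2)) become approximate $\varphi$-equilibria that cannot all collapse to $\mu_\varphi$ in norm. The step $(2)\Rightarrow(6)$ then follows because any $\mu\in t^u_{\varphi+\psi}(M,f)=\mathcal{M}^u_{\varphi+\psi}(M,f)$ is an approximate $\varphi$-equilibrium by Lipschitz continuity of $P^u$; $(6)\Rightarrow(7)$ is trivial; and $(7)\Rightarrow(2)$ follows by perturbing $\varphi$ so that each given approximate equilibrium becomes exact for the perturbed potential.

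For the dynamical block $(2)\Rightarrow(3)\Leftrightarrow(4)$ and $(3)\Rightarrow(5)\Rightarrow(1)$: uniqueness in $(3)$ follows from $(2)$ via Theorem E, and the strict gap over ergodic $\nu\neq\mu_\varphi$ holds because any sequence of such $\nu_n$ saturating the sup would norm-converge to $\mu_\varphi$ by $(2)$, but distinct ergodic measures are mutually singular and hence at total variation distance $2$, a contradiction. The step $(3)\Rightarrow(4)$ uses weak$^*$ continuity of $\mu\mapsto\int_M\varphi\,d\mu$ to find $V$ inside which $|\int_M\varphi\,d(\nu-\mu_\varphi)|$ is at most half the gap of $(3)$. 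Conversely $(4)\Rightarrow(3)$ invokes upper semi-continuity of $h^u$: any ergodic $\nu_n\neq\mu_\varphi$ saturating the sup has a weak$^*$ subsequential limit $\nu$ which, by upper semi-continuity, is a u-equilibrium, hence $\nu=\mu_\varphi$ by uniqueness; so $\nu_n\in V$ eventually, contradicting the entropy gap of $(4)$. For $(3)\Rightarrow(5)$, ergodic-decompose any $\mu\in\mathcal{M}_f(M)$ as $\mu=\int\nu\,dm(\nu)$; the affineness of $h^u$ under this decomposition (a consequence of the unstable Shannon-McMillan-Breiman theorem in \cite{HHW}) combined with the strict variational gap from $(3)$ shows that $P^u(f,\varphi+\psi)=h^u_{\mu_\varphi}(f)+\int_M(\varphi+\psi)\,d\mu_\varphi$ for all $\psi$ of sufficiently small norm, which is affine in $\psi$. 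Finally $(5)\Rightarrow(1)$ is immediate.

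The main obstacles I expect are the implications $(4)\Rightarrow(3)$ and $(3)\Rightarrow(5)$. The former invokes upper semi-continuity of $h^u$ crucially to promote a local weak$^*$-neighborhood entropy gap to a global variational gap; such a step is unavailable in the classical pressure setting without imposing upper semi-continuity as an extra hypothesis, and illustrates the advantage of working in the unstable context. The latter requires affineness of the unstable entropy under ergodic decomposition, which, while expected by analogy with classical entropy, must be carefully derived from the unstable entropy machinery of \cite{HHW} and then combined with weak$^*$ compactness of $\mathcal{M}_f(M)$ and upper semi-continuity to convert the strict inequality of $(3)$ into a uniform quantitative bound absorbing perturbations of size $\|\psi\|$.
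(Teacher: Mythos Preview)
Your overall scheme is close to the paper's and most implications are handled correctly; the paper runs the chain $(1)\Rightarrow(2)\Rightarrow(3)\Leftrightarrow(4)$, $(3)\Rightarrow(5)\Rightarrow(6)\Rightarrow(7)\Rightarrow(1)$, while you rearrange some links (proving $(2)\Rightarrow(1)$ and $(5)\Rightarrow(1)$ directly, and linking $(6),(7)$ to $(2)$), which is fine. The one genuine gap is your step $(7)\Rightarrow(2)$. The phrase ``perturbing $\varphi$ so that each given approximate equilibrium becomes exact for the perturbed potential'' is not justified: given an approximate u-equilibrium $\mu_n$ for $\varphi$, there is in general no small $\psi_n$ with $\mu_n\in t^u_{\varphi+\psi_n}(M,f)$. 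Not every invariant measure is a u-equilibrium state for \emph{some} continuous potential, and even when it is, that potential need not lie near $\varphi$. A Br{\o}nsted--Rockafellar type argument would at best produce some nearby exact tangent functional $\nu_n$ at some nearby point $\varphi_n$, but $(7)$ only controls the \emph{infimum} of $\|\mu-\mu_\varphi\|$ over $t^u_{\varphi_n}(M,f)$, not the particular $\nu_n$ you obtain, so the argument still does not close.

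The paper avoids this by proving $(7)\Rightarrow(1)$ directly with a one-line estimate: for any $\mu\in t^u_{\varphi+\psi}(M,f)=\mathcal{M}^u_{\varphi+\psi}(M,f)$ one has
\[
0\ \le\ P^u(f,\varphi+\psi)-P^u(f,\varphi)-\int_M\psi\,d\mu_\varphi
\ \le\ \int_M\psi\,d\mu-\int_M\psi\,d\mu_\varphi\ \le\ \|\psi\|\cdot\|\mu-\mu_\varphi\|,
\]
the middle inequality coming from $P^u(f,\varphi+\psi)=h^u_\mu(f)+\int_M(\varphi+\psi)\,d\mu$ together with $P^u(f,\varphi)\ge h^u_\mu(f)+\int_M\varphi\,d\mu$. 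Taking the infimum over $\mu\in t^u_{\varphi+\psi}(M,f)$ and using $(7)$ gives Fr\'echet differentiability. Since you already have $(1)\Rightarrow(2)$, replacing your $(7)\Rightarrow(2)$ by this $(7)\Rightarrow(1)$ closes your chain with no further changes.
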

It follows that Fr\'{e}chet differentiability of $P^u(f, \cdot)$ implies the uniqueness of u-equilibrium state. It is also clear that Fr\'{e}chet differentiability of $P^u(f, \cdot)$ is stronger than Gateaux differentiability of $P^u(f, \cdot)$, either by the definitions or by Theorems F and G.

\section{Unstable topological pressure}\label{u-pressure}
In this section, we redefine the unstable topological pressure via the spanning sets and open covers, and discuss its basic properties.
\subsection{Definition using spanning sets}
Recall that unstable topological pressure is defined in
Definition~\ref{unstableentropy1} using $(n,\epsilon)$ u-separated sets.
We can also define unstable topological pressure by using
 $(n,\epsilon)$ u-spanning sets as follows.

%\subsection{Definition using separated sets and spanning sets}

 A set $F \subset W^u(x)$ is called an \emph{$(n,\epsilon)$ u-spanning set }of $\overline{W^u(x, \delta)}$ if $\overline{W^u(x, \delta)} \subset \bigcup_{y\in F}B^u_{n}(y,\epsilon)$, where $B^u_{n}(y,\epsilon)=\{z\in W^u(x): d^u_{n}(y,z) \leq\epsilon\}$ is the $(n,\epsilon)$ u-Bowen ball around $y$. Put
\begin{equation*}
\begin{aligned}
Q^u(f,\varphi, \epsilon,n,x,\delta):=\inf\Big\{&\sum_{x\in F}\exp((S_n\varphi)(x))|\\
&F \text{\ is an\ } (n, \epsilon) \text{\ u-spanning subset of\ }\overline{W^u(x,\delta)}\Big\}.
\end{aligned}
\end{equation*}
Then in Definition \ref{unstableentropy1} we can also define
$$P^u(f, \varphi, \overline{W^u(x,\delta)}):=\lim_{\epsilon \to 0}\limsup_{n\to \infty}\frac{1}{n}\log Q^u(f,\varphi, \epsilon,n,x,\delta).$$
It is standard to verify that these two definitions for $P^u(f, \varphi, \overline{W^u(x,\delta)})$ coincide.

The following lemma is useful.

\begin{lemma}\label{smalldelta}
$P^u(f, \varphi):=\sup_{x\in M}P^u(f, \varphi, \overline{W^u(x,\delta)})$ for any $\delta >0$.
\end{lemma}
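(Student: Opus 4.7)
\textbf{Proof proposal for Lemma \ref{smalldelta}.}

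My plan is to show that $\delta \mapsto \sup_{x\in M} P^u(f, \varphi, \overline{W^u(x,\delta)})$ is in fact constant, so that taking the limit as $\delta \to 0$ in Definition~\ref{unstableentropy1} is harmless. Monotonicity in $\delta$ is immediate: if $\delta_1 \le \delta_2$, then $\overline{W^u(x,\delta_1)} \subset \overline{W^u(x,\delta_2)}$, so every $(n,\epsilon)$ u-separated subset of the smaller ball is one of the larger, giving $P^u(f,\varphi,\epsilon,n,x,\delta_1) \le P^u(f,\varphi,\epsilon,n,x,\delta_2)$. Hence, taking $\frac{1}{n}\log$, $\limsup_n$, $\lim_{\epsilon \to 0}$, and $\sup_x$, we see that $\sup_{x} P^u(f,\varphi,\overline{W^u(x,\delta)})$ is non-decreasing in $\delta$, so it suffices to establish
\[
\sup_{x\in M} P^u(f,\varphi,\overline{W^u(x,\delta)}) \le \sup_{y\in M} P^u(f,\varphi,\overline{W^u(y,\delta')})
\]
for any $0 < \delta' < \delta$.

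The key geometric step is the following uniform covering claim: there exists an integer $N = N(\delta,\delta')$ such that for every $x \in M$ the closed ball $\overline{W^u(x,\delta)}$ can be covered by at most $N$ balls of the form $\overline{W^u(x_i,\delta')}$ with $x_i \in \overline{W^u(x,\delta)}$. This follows from compactness of $M$ together with continuity of the unstable foliation: the induced Riemannian volumes on local unstable leaves are bounded above for balls of radius $\delta$ and bounded below for balls of radius $\delta'/2$, uniformly in the base point, so the standard volume-comparison covering argument produces a uniform $N$.

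Fix $x \in M$ and an $(n,\epsilon)$ u-separated set $E \subset \overline{W^u(x,\delta)}$. Cover $\overline{W^u(x,\delta)}$ by $\overline{W^u(x_i,\delta')}$, $i=1,\dots,N$, and let $E_i := E \cap \overline{W^u(x_i,\delta')}$. Each $E_i$ is $(n,\epsilon)$ u-separated in $\overline{W^u(x_i,\delta')}$, so
\[
\sum_{y\in E} e^{(S_n\varphi)(y)} \;\le\; \sum_{i=1}^{N}\sum_{y\in E_i} e^{(S_n\varphi)(y)} \;\le\; N \cdot \sup_{y\in M} P^u(f,\varphi,\epsilon,n,y,\delta').
\]
Taking the supremum over $E$, then $\frac{1}{n}\log(\cdot)$, $\limsup_{n\to\infty}$, and noting $\frac{1}{n}\log N \to 0$, then $\lim_{\epsilon \to 0}$, we obtain $P^u(f,\varphi,\overline{W^u(x,\delta)}) \le \sup_{y\in M} P^u(f,\varphi,\overline{W^u(y,\delta')})$. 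Taking the supremum over $x$ and then letting $\delta' \to 0$ yields $\sup_{x} P^u(f,\varphi,\overline{W^u(x,\delta)}) \le P^u(f,\varphi)$, which combined with monotonicity gives equality.

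The main obstacle is the uniform covering bound $N$; once that is in hand, the rest is a standard separated-set partition argument. The uniformity relies crucially on the compactness of $M$ and on the continuity of the unstable foliation $W^u$ (available since $f$ is $C^1$ partially hyperbolic), which give uniform upper and lower volume estimates for unstable balls of fixed radii — an ingredient that parallels the usual reduction in the definition of classical topological entropy/pressure, adapted here to the intrinsic geometry of unstable leaves.
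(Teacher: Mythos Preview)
Your approach is essentially the same as the paper's: monotonicity gives one inequality, and for the reverse you cover a $\delta$-ball in an unstable leaf by finitely many $\delta'$-balls and observe that the finite multiplicity $N$ disappears after taking $\frac{1}{n}\log$ and $\limsup_n$. The paper carries this out with spanning sets at a single approximating point $y$; you do it with separated sets at every $x$, which is only a cosmetic difference.

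There is, however, one genuine slip in the write-up. In your displayed inequality you pass from $\sum_{i=1}^{N}\sum_{y\in E_i} e^{(S_n\varphi)(y)}$ directly to $N\cdot \sup_{y\in M} P^u(f,\varphi,\epsilon,n,y,\delta')$, and then take $\limsup_{n}$ and $\lim_{\epsilon\to 0}$ to reach $\sup_{y\in M} P^u(f,\varphi,\overline{W^u(y,\delta')})$. But $\limsup_{n}$ does not commute with a supremum over the uncountable set $M$: in general $\limsup_n \sup_y a_n(y)\ge \sup_y \limsup_n a_n(y)$, and you need the opposite inequality. The fix is immediate and is exactly what the paper does: keep the \emph{finite} collection $x_1,\dots,x_N$ (which for a fixed $x$ is independent of $n$ and $\epsilon$), bound by $N\cdot\max_{1\le i\le N} P^u(f,\varphi,\epsilon,n,x_i,\delta')$, use that $\limsup_n$ of a finite maximum equals the maximum of the $\limsup_n$'s, and only then bound by $\sup_{y\in M}$. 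Incidentally, once you argue this way the uniform-in-$x$ bound on $N$ that you take pains to justify is not needed: $N$ vanishes after $\limsup_n$ for each fixed $x$, before you ever take $\sup_x$.
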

\begin{proof}
It is easy to see that $P^u(f, \varphi)\leq \sup_{x\in M}P^u(f, \varphi, \overline{W^u(x,\delta)})$ for any $\delta >0$ since $\delta \mapsto \sup_{x\in M}P^u(f, \varphi, \overline{W^u(x,\delta)})$ is increasing.

Let us prove the other direction for some fixed $\delta >0$. For any $\rho >0$, there exists $y \in M$ such that
\begin{equation}\label{e:anyradius1}
\sup_{x\in M}P^u(f, \varphi, \overline{W^u(x,\delta)})\leq P^u(f, \varphi, \overline{W^u(y,\delta)})+\frac{\rho}{3}.
\end{equation}
Pick $\epsilon_0>0$ such that
\begin{equation}\label{e:anyradius2}
\begin{aligned}
P^u(f, \varphi, \overline{W^u(y,\delta)})&=\lim_{\epsilon \to 0}\limsup_{n\to \infty}\frac{1}{n}\log Q^u(f,\varphi, \epsilon,n,y,\delta) \\ &\leq \limsup_{n\to \infty}\frac{1}{n}\log Q^u(f,\varphi, \epsilon_0,n,y,\delta)+\frac{\rho}{3}.
\end{aligned}
\end{equation}
We can also choose $\delta_1>0$ small enough that such that $\delta_1<\delta$ and
\begin{equation}\label{e:anyradius3}
P^u(f, \varphi) \geq\sup_{x\in M}P^u(f, \varphi, \overline{W^u(x,\delta_1)})-\frac{\rho}{3}.
\end{equation}
Then there exist $y_i \in \overline{W^u(y,\delta)}, 1\leq i\leq N$ where $N$ only depends on $\delta$, $\delta_1$, and the Riemannian structure on $\overline{W^u(y,\delta)}$, such that
\begin{equation}\label{e:anyradius4}
\overline{W^u(y,\delta)} \subset \bigcup_{i=1}^N\overline{W^u(y_i, \delta_1)}.
\end{equation}
Then we have
\begin{equation*}
\begin{aligned}
\sup_{x\in M}P^u(f, \varphi, \overline{W^u(x,\delta)})&\leq P^u(f, \varphi, \overline{W^u(y,\delta)})+\frac{\rho}{3} \ \ \ \ \ \text{by\ } \eqref{e:anyradius1}\\
&\leq\limsup_{n\to \infty}\frac{1}{n}\log Q^u(f,\varphi, \epsilon_0,n,y,\delta)+\frac{2\rho}{3} \ \ \ \ \ \text{by\ } \eqref{e:anyradius2}\\
&\leq \limsup_{n\to \infty}\frac{1}{n}\log \left(\sum_{i=1}^N Q^u(f,\varphi, \epsilon_0,n,y_i,\delta_1)\right)+\frac{2\rho}{3} \ \ \ \ \ \text{by\ } \eqref{e:anyradius4}\\
&\leq \limsup_{n\to \infty}\frac{1}{n}\log N Q^u(f,\varphi, \epsilon_0,n,y_j,\delta_1)+\frac{2\rho}{3} \text{\ for some } 1\leq j \leq N \\
&=\limsup_{n\to \infty}\frac{1}{n}\log Q^u(f,\varphi, \epsilon_0,n,y_j,\delta_1)+\frac{2\rho}{3}\\
&\leq \lim_{\epsilon \to 0}\limsup_{n\to \infty}\frac{1}{n}\log Q^u(f,\varphi, \epsilon,n,y_j,\delta_1)+\frac{2\rho}{3}\\
& =P^u(f, \varphi, \overline{W^u(y_j,\delta_1)})+\frac{2\rho}{3}\\
&\leq \sup_{x\in M}P^u(f, \varphi, \overline{W^u(x,\delta_1)})+\frac{2\rho}{3}\\
& \leq P^u(f, \varphi)+\rho \ \ \ \ \ \text{by\ } \eqref{e:anyradius3}.
\end{aligned}
\end{equation*}
Since $\rho>0$ is arbitrary, we have $\sup_{x\in M}P^u(f, \varphi, \overline{W^u(x,\delta)})\leq P^u(f, \varphi)$.
\end{proof}

\subsection{Definition using open covers}
We proceed to define the unstable topological pressure by using open covers. Let $\mathcal{C}_M$ denote the set of Borel covers of $M$ and $\mathcal{C}_M^o\subset \mathcal{C}_M$ the set of open covers of $M$. Given $\mathcal{U}\in \mathcal{C}_M$, denote $\mathcal{U}_m^n:=\bigvee_{i=m}^n f^{-i}\mathcal{U}$. Put
$$p^u(f,\varphi, \mathcal{U}, n,x,\delta):=\inf\Big\{\sum_{B\in \mathcal{V}} \sup_{y\in B\cap \overline{W^u(x,\delta)}}\exp((S_n\varphi)(y))|\mathcal{V} \in \mathcal{C}_M, \mathcal{V}\succeq\ \mathcal{U}_0^{n-1}\Big\}.$$
If $B\cap \overline{W^u(x,\delta)}=\emptyset$, we set $\sup_{y\in B\cap \overline{W^u(x,\delta)}}\exp((S_n\varphi)(y))=0$.
\begin{definition}\label{unstableentropy2}
We define
\begin{equation*}
\begin{aligned}
\widetilde{P}^u(f, \varphi)&:=\lim_{\delta \to 0}\sup_{x\in M}\widetilde{P}^u(f, \varphi, \overline{W^u(x,\delta)}),
\end{aligned}
\end{equation*}
where
\begin{equation*}
\begin{aligned}
\widetilde{P}^u(f, \varphi, \overline{W^u(x,\delta)})&:=\sup_{\mathcal{U}\in \mathcal{C}_M^o}\limsup_{n\to \infty}\frac{1}{n}\log p^u(f,\varphi, \mathcal{U},n,x,\delta).
\end{aligned}
\end{equation*}
\end{definition}

\begin{remark}
It is not clear whether the sequence $\log p^u(f,\varphi, \epsilon,n,x,\delta)$ is subadditive or not, so we have used $\limsup$ in the definition above. This is one of the main difference from the case for classical topological pressure.
\end{remark}

Observe that for $\delta >0$ small enough, there exists $C>1$ such that for any $x\in M$,
\begin{equation}\label{e:distance}
d(y,z) \leq d^u(y,z)\leq Cd(y,z), \text{\ \ for any }y,z \in \overline{W^u(x,\delta)}
\end{equation}
since $M$ is compact and $W^u$ is a continuous foliation.
By arguments similar to the proof of Theorems 9.2 and 9.4 in \cite{W82}, we can verify that Definitions \ref{unstableentropy1} and \ref{unstableentropy2} for unstable topological pressure coincide:

\begin{proposition}
$\widetilde{P}^u(f, \varphi, \overline{W^u(x,\delta)})=P^u(f, \varphi, \overline{W^u(x,\delta)})$. As a consequence,
$$\widetilde{P}^u(f, \varphi)=P^u(f, \varphi).$$
\end{proposition}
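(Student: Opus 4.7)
The plan is to mirror Walters' proofs of Theorems 9.2 and 9.4 in \cite{W82}, with two modifications: everything is restricted to the local unstable leaf $\overline{W^u(x,\delta)}$, and the intrinsic leaf metric $d^u$ is compared to the ambient metric $d$ via the inequality \eqref{e:distance}. Since that inequality is only valid for $\delta$ small, I will fix such a $\delta$ throughout and let $C>1$ be the constant it provides. I will also use the fact that $\varphi$ is uniformly continuous, so that for any $\rho>0$ there exists $\e_\rho>0$ with $|\varphi(y)-\varphi(z)|<\rho$ whenever $d(y,z)<\e_\rho$; this yields $|(S_n\varphi)(y)-(S_n\varphi)(z)|<n\rho$ on any $d_n$-ball of radius $\e_\rho$.

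The first step is the standard equivalence of the separated-set and spanning-set definitions of $P^u(f,\varphi,\overline{W^u(x,\delta)})$. A maximal $(n,\e)$ u-separated set is automatically $(n,\e)$ u-spanning, and any $(n,\e)$ u-spanning set $F$ dominates any $(n,2\e)$ u-separated set $E$ via the map assigning each $y\in E$ to some $z\in F$ with $d^u_n(y,z)\le\e$; uniform continuity turns the corresponding $(S_n\varphi)$-sums into comparable quantities up to a factor $e^{n\rho}$. Letting $\e\to 0$ and then $\rho\to 0$ gives the equality, which justifies using $Q^u$ in the proposed proof.

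The core step is the two-sided comparison of $Q^u$ with $p^u$. For the direction $\widetilde{P}^u(f,\varphi,\overline{W^u(x,\delta)})\le P^u(f,\varphi,\overline{W^u(x,\delta)})$, given $\mathcal U\in\mathcal C_M^o$ pick its Lebesgue number $\lambda>0$. For $\e<\lambda/(2C)$, each $d^u_n$-Bowen ball $B^u_n(y,\e)$ satisfies $f^i B^u_n(y,\e)\subset B_{d^u}(f^iy,\e)\subset B_d(f^iy,C\e)$ by \eqref{e:distance}, so every such ball lies in a single element of $\mathcal U_0^{n-1}$. Converting an $(n,\e)$ u-spanning set $F$ of $\overline{W^u(x,\delta)}$ into a refinement $\mathcal V\succeq\mathcal U_0^{n-1}$ (one superset per Bowen ball, intersected with small enough enlargements of $\overline{W^u(x,\delta)}$) and using uniform continuity of $\varphi$ yields
\[
p^u(f,\varphi,\mathcal U,n,x,\delta)\le e^{n\rho}\,Q^u(f,\varphi,\e,n,x,\delta).
\]
Taking $\limsup_n\frac1n\log$ and then $\e\to0$ and $\rho\to0$ gives the claim. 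For the reverse direction, given $\e>0$ choose $\mathcal U\in\mathcal C_M^o$ with $\operatorname{diam}\mathcal U<\e/C$; then every $B\in\mathcal U_0^{n-1}$ has $d^u_n$-diameter $<\e$ on $\overline{W^u(x,\delta)}$ by \eqref{e:distance}, so every $(n,\e)$ u-separated subset of $\overline{W^u(x,\delta)}$ meets each $B\in\mathcal V$ in at most one point; again uniform continuity gives $P^u(f,\varphi,\e,n,x,\delta)\le e^{n\rho}\,p^u(f,\varphi,\mathcal U,n,x,\delta)$, and the desired inequality follows by taking $\limsup_n\frac1n\log$ followed by $\e\to0$.

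The consequence $\widetilde P^u(f,\varphi)=P^u(f,\varphi)$ is then immediate after taking $\sup_{x\in M}$ and $\lim_{\delta\to0}$ on both sides. The main obstacle I anticipate is the one flagged by the authors in the Remark: the sequence $\log p^u(f,\varphi,\mathcal U,n,x,\delta)$ is not known to be subadditive, so Fekete's lemma is unavailable and I must be careful that all comparisons are stated in terms of $\limsup_{n\to\infty}\frac1n\log(\cdot)$ and that the multiplicative constants I accumulate (the $e^{n\rho}$ factors, the Lebesgue-number dependence on $\mathcal U$, and the leaf-enlargement in passing from a Borel cover of $\overline{W^u(x,\delta)}$ to a Borel cover $\mathcal V\in\mathcal C_M$ of $M$) only contribute an additive term that vanishes in the limit $\e,\rho\to0$.
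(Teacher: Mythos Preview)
Your plan is exactly what the paper intends: it gives no proof beyond citing Walters' Theorems~9.2 and~9.4 together with the metric comparison~\eqref{e:distance}, so your outline already exceeds the paper's level of detail, and the overall strategy is correct.

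One step deserves more care. In the direction $P^u\le\widetilde P^u$ you assert that if $\diam\mathcal U<\e/C$ then every $B\in\mathcal U_0^{n-1}$ has $d^u_n$-diameter $<\e$ on $\overline{W^u(x,\delta)}$ ``by~\eqref{e:distance}''. But \eqref{e:distance} only compares $d$ and $d^u$ for pairs lying in a common $\delta$-plaque, and for $i\ge 1$ the iterates $f^iy,f^iz$ need not lie in any $\overline{W^u(x',\delta)}$ a priori: unstable leaves can be dense, so two points on the same global leaf may be $d$-close yet arbitrarily $d^u$-far. The remedy is a short induction on $i$. Set $\Lambda:=\sup_{x}\|D_xf|_{E^u}\|$ and require $\e<\delta/\Lambda$. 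If $d^u(f^iy,f^iz)<\e$, then $d^u(f^{i+1}y,f^{i+1}z)\le\Lambda\,d^u(f^iy,f^iz)<\Lambda\e<\delta$, so both iterates lie in $\overline{W^u(f^{i+1}y,\delta)}$ and \eqref{e:distance} now legitimately gives $d^u(f^{i+1}y,f^{i+1}z)\le C\,d(f^{i+1}y,f^{i+1}z)<\e$. With this bootstrap in place the rest of your argument goes through unchanged. (In the other direction you only use $d\le d^u$, which holds for any two points on the same leaf without restriction, so no analogous issue arises there.)
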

\subsection{Basic properties of unstable topological pressure}
Here we list some properties of unstable topological pressure. The proof is straightforward by definition and hence is omitted.
\begin{proposition}\label{proppressure}
If $\varphi, \psi \in C(M, \mathbb{R})$ with norm $\|\cdot \|$ and $c\in \mathbb{R}$, then the following statements are true.
\begin{enumerate}
  \item $P^u(f, 0)=h_{\text{top}}^u(f)$.
  \item $P^u(f, \varphi+c)=P^u(f, \varphi)+c$.
  \item $\varphi \leq \psi$ implies $P^u(f, \varphi)\leq P^u(f, \psi)$. In particular, $h_{\text{top}}^u(f)+\inf \varphi \leq P^u(f, \varphi) \leq h_{\text{top}}^u(f)+\sup \varphi$.
  \item $|P^u(f, \varphi)-P^u(f, \psi)|\leq \|\varphi-\psi\|$.
  \item $P^u(f, \cdot)$ is convex.
  \item $P^u(f, \varphi+\psi\circ f-\psi)=P^u(f, \varphi)$.
  \item $P^u(f, \varphi+\psi)\leq P^u(f, \varphi)+P^u(f, \psi)$.
  \item $P^u(f, c\varphi)\leq cP^u(f, \varphi)$ if $c\geq 1$ and $P^u(f, c\varphi)\geq cP^u(f, \varphi)$ if $c\leq 1$.
  \item $|P^u(f, \varphi)|\leq P^u(f, |\varphi|)$.
\end{enumerate}
\end{proposition}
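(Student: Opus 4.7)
The plan is to derive all nine properties directly from the separated-set formulation
$$P^u(f,\varphi,\overline{W^u(x,\delta)})=\lim_{\epsilon\to 0}\limsup_{n\to\infty}\frac{1}{n}\log P^u(f,\varphi,\epsilon,n,x,\delta),$$
exploiting the behavior of the Birkhoff sum $S_n\varphi$ and the fact that none of the operations involved (taking $\sup$ over $(n,\epsilon)$ u-separated subsets of $\overline{W^u(x,\delta)}$, taking $\sup_{x\in M}$, passing $\epsilon\to 0$, $\delta\to 0$) disrupts monotonicity in $\varphi$. Property (1) is immediate because $\varphi\equiv 0$ makes $\exp(S_n\varphi(y))=1$, so $P^u(f,0,\epsilon,n,x,\delta)$ equals the maximal cardinality of an $(n,\epsilon)$ u-separated subset, recovering the definition of $h_{\text{top}}^u(f)$. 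Property (2) follows from $S_n(\varphi+c)=S_n\varphi+nc$, which pulls an $e^{nc}$ factor out of every sum; the $\frac{1}{n}\log$ converts this to an additive $c$. Property (3) is clear since $\varphi\le\psi$ implies $\exp(S_n\varphi)\le\exp(S_n\psi)$ term by term on the same separated set, and the two-sided bound in (3) follows by applying (1), (2), (3) to the constant functions $\inf\varphi$ and $\sup\varphi$. Property (4) is obtained by inserting $\varphi\le\psi+\|\varphi-\psi\|$ and $\psi\le\varphi+\|\varphi-\psi\|$ into (3) and then using (2).

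For property (6), the key identity is the telescoping $S_n(\varphi+\psi\circ f-\psi)(y)=S_n\varphi(y)+\psi(f^n y)-\psi(y)$, so $\exp(S_n(\varphi+\psi\circ f-\psi)(y))$ equals $\exp(S_n\varphi(y))$ up to the factor $\exp(\psi(f^n y)-\psi(y))$, which is bounded above and below by $e^{\pm 2\|\psi\|}$ uniformly in $y$ and $n$. This uniform multiplicative constant disappears after taking $\frac{1}{n}\log$ and letting $n\to\infty$. Property (9) follows by writing $-|\varphi|\le\varphi\le|\varphi|$, applying (3), and observing that $P^u(f,-|\varphi|)\ge -P^u(f,|\varphi|)$ will give the lower bound once one knows $P^u(f,-\psi)\ge -P^u(f,\psi)$, which itself comes from the sum having at least one term.

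Properties (5), (7), (8) are the only ones requiring a genuine inequality beyond monotonicity; the uniform tool is H\"older's inequality. For (5), given $t\in[0,1]$ and $\varphi,\psi$, apply H\"older with exponents $1/t$ and $1/(1-t)$ to the sum $\sum_{y\in E}\exp(S_n(t\varphi+(1-t)\psi)(y))=\sum_{y\in E}\exp(S_n\varphi(y))^{t}\exp(S_n\psi(y))^{1-t}$ over any $(n,\epsilon)$ u-separated set $E$, yielding the bound
$$P^u(f,t\varphi+(1-t)\psi,\epsilon,n,x,\delta)\le P^u(f,\varphi,\epsilon,n,x,\delta)^{t}\,P^u(f,\psi,\epsilon,n,x,\delta)^{1-t},$$
and then take $\frac{1}{n}\log$ and pass to the limits. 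Property (8) with $c\ge 1$ follows from the analogous H\"older bound $\sum \exp(cS_n\varphi(y))\cdot 1\ge (\sum\exp(S_n\varphi(y)))^{c}\cdot(\#E)^{1-c}$, or equivalently by applying (5) to the convex combination writing $\varphi=(1/c)\cdot c\varphi+(1-1/c)\cdot 0$ and then invoking (1); the case $c\le 1$ is symmetric (with $c\le 0$ handled via (9) and the $c\ge 1$ case applied to $|c|$, noting the direction reversal). Property (7) is then the combination $P^u(f,\varphi+\psi)=P^u(f,\tfrac12(2\varphi)+\tfrac12(2\psi))\le\tfrac12 P^u(f,2\varphi)+\tfrac12 P^u(f,2\psi)\le P^u(f,\varphi)+P^u(f,\psi)$ by (5) and (8).

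I expect no real obstacle: the only subtlety is that $\log p^u$ is not known to be subadditive (as flagged in the remark after Definition~\ref{unstableentropy2}), so one must work with $\limsup$ throughout and be careful that the manipulations above are uniform in $\epsilon$, $n$, $x$, and $\delta$ before passing to the iterated limits. Since every inequality produced by H\"older or telescoping holds pointwise in these parameters, the limits $\limsup_{n\to\infty}\frac{1}{n}\log$, $\lim_{\epsilon\to 0}$, $\sup_{x\in M}$ and $\lim_{\delta\to 0}$ can be taken in the order of Definition~\ref{unstableentropy1} without difficulty, and each of (1)--(9) drops out.
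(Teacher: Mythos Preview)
The paper omits this proof entirely (``straightforward by definition''), so your direct approach via the separated-set formulation is appropriate and most of your arguments for (1)--(6) and (9) are fine.

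However, your argument for (8) is wrong in both versions. The H\"older/Jensen bound you wrote,
\[
\sum_{y\in E}\exp(cS_n\varphi(y))\;\ge\;\Bigl(\sum_{y\in E}\exp(S_n\varphi(y))\Bigr)^{c}(\#E)^{1-c},
\]
is a \emph{lower} bound on the $c\varphi$ sum and cannot yield the \emph{upper} bound $P^u(f,c\varphi)\le cP^u(f,\varphi)$. Your alternative via convexity (5) also goes the wrong way: writing $\varphi=(1/c)(c\varphi)+(1-1/c)\cdot 0$ and applying (5) gives
\[
P^u(f,\varphi)\le \tfrac{1}{c}P^u(f,c\varphi)+\bigl(1-\tfrac{1}{c}\bigr)h_{\text{top}}^u(f),
\]
i.e.\ $P^u(f,c\varphi)\ge cP^u(f,\varphi)-(c-1)h_{\text{top}}^u(f)$, again the wrong direction. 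The correct elementary inequality is the opposite one: for $a_i\ge 0$ and $c\ge 1$, $\sum_i a_i^{c}\le (\sum_i a_i)^{c}$ (since each $a_i/\sum_j a_j\in[0,1]$ implies $(a_i/\sum_j a_j)^c\le a_i/\sum_j a_j$). Applying this with $a_y=e^{S_n\varphi(y)}$ gives $\sum_{y\in E}e^{cS_n\varphi(y)}\le(\sum_{y\in E}e^{S_n\varphi(y)})^c$ for every separated set $E$, hence $P^u(f,c\varphi,\epsilon,n,x,\delta)\le P^u(f,\varphi,\epsilon,n,x,\delta)^c$, and (8) for $c\ge 1$ follows. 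The case $0\le c\le 1$ uses the reverse inequality, and $c<0$ then follows from (7).

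Since your proof of (7) rests on (8), it inherits the gap; but (7) is easier proved directly via $\sum_{y\in E}e^{S_n\varphi(y)}e^{S_n\psi(y)}\le(\sum_{y\in E}e^{S_n\varphi(y)})(\sum_{y\in E}e^{S_n\psi(y)})$, which holds termwise for nonnegative summands. Your justification for (9) (``the sum having at least one term'') is too vague; the clean route is to use (7) with $\psi=-|\varphi|$ to get $P^u(f,-|\varphi|)\ge h_{\text{top}}^u(f)-P^u(f,|\varphi|)\ge -P^u(f,|\varphi|)$.
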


\section{The variational principle}\label{pfThmA}
\subsection{Some properties of unstable metric entropy}
In this subsection, we collect some important properties of unstable metric entropy proved in \cite{HHW}.
In particular, they will be used in the proof of variational principle (Theorem A) and in describing the set $\mathcal{M}^u_\varphi(M,f)$ in Theorem B.

\begin{lemma}\label{expansive}(Corollary A.2 in \cite{HHW})
$\disp h^u_\mu(f)=h_\mu(f, \a|\eta)
=\lim_{n\to\infty}\frac{1}{n}H_\mu(\a_0^{n-1}|\eta)$
for any $\a\in \P$ and $\eta\in \P^u$.
\end{lemma}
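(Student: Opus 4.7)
Since the statement is a direct restatement of Corollary A.2 of \cite{HHW} (and is already contained in Theorem \ref{ThmA} above), the plan is simply to invoke that result. For completeness I indicate the proof strategy one would employ from scratch; the argument naturally splits into three stages.

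First, one upgrades the $\limsup$ in the definition of $h_\mu(f,\alpha|\eta)$ to a genuine limit. The key step is the subadditivity bound
\[
H_\mu(\alpha_0^{n+m-1}|\eta) \leq H_\mu(\alpha_0^{n-1}|\eta) + H_\mu(f^{-n}\alpha_0^{m-1}|\eta),
\]
combined with the observation that for $\eta \in \P^u$ the partition $f^{-1}\eta$ refines $\eta$ along unstable leaves (because $f^{-1}$ contracts $W^u_\loc$), which yields $H_\mu(f^{-n}\gamma|\eta) \leq H_\mu(\gamma|\eta)$ for any finite partition $\gamma$. Fekete's lemma then gives convergence of $\frac{1}{n}H_\mu(\alpha_0^{n-1}|\eta)$.

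Second, one shows independence of the choice of $\alpha \in \P$. For two finite partitions $\alpha, \alpha' \in \P$, the standard estimate $|H_\mu(\alpha_0^{n-1}|\eta) - H_\mu((\alpha')_0^{n-1}|\eta)| \leq n\bigl(H_\mu(\alpha|\alpha' \vee \eta) + H_\mu(\alpha'|\alpha \vee \eta)\bigr)$ reduces matters to bounding $H_\mu(\alpha|\alpha' \vee \eta)$. The geometric content of $\P^u$---that $\eta$ subdivides $W^u_\loc$ into pieces of diameter at most $\e_0$---means that once $\eta$ is fine enough, it resolves $\alpha$ up to small error along the unstable direction, so the conditional entropies on the right are uniformly bounded in $n$, and the $1/n$ normalization kills their contribution. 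An analogous argument handles independence of $\eta \in \P^u$, so that both suprema in Definition \ref{Defuentropy} are redundant.

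The main obstacle is the second stage. Making precise the sense in which $\eta$ and $\alpha$ interact along unstable leaves---so that the conditional entropy per iterate does not depend on the choice of $\alpha$---requires careful manipulation of conditional measures on measurable partitions subordinate to unstable manifolds, together with Ledrappier--Young type arguments relating the dynamics of $\alpha_0^{n-1}$ to the geometry of $W^u$. This is the technical heart of \cite{HHW}; here it is insulated from the present paper by quoting Corollary A.2 of that work as a black box.
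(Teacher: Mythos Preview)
Your core proposal---invoke Corollary~A.2 of \cite{HHW}---is exactly what the paper does: the lemma is stated with that citation and no proof is supplied.

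One caution about your ``for completeness'' sketch: the subadditivity step does not go through as written. For $\eta\in\P^u$ the partition $f^{-1}\eta$ need \emph{not} refine $\eta$. Although $f^{-1}$ contracts $W^u_\loc$, the element $(f^{-1}\eta)(x)=f^{-1}\bigl(\beta(fx)\cap W^u_\loc(fx)\bigr)$ is a small piece of $W^u_\loc(x)$ around $x$, but there is no reason it should lie inside $\beta(x)$; when $x$ is near $\partial\beta$ within its leaf it typically will not. Hence the inequality $H_\mu(f^{-n}\gamma\mid\eta)\le H_\mu(\gamma\mid\eta)$ fails in general, and Fekete's lemma is not directly available for the sequence $n\mapsto H_\mu(\alpha_0^{n-1}\mid\eta)$. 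In \cite{HHW} the existence of the limit and its independence of $\alpha$ and $\eta$ are established by a more delicate route (a Shannon--McMillan--Breiman type theorem along unstable leaves, together with comparison to Ledrappier--Young increasing partitions), which is precisely why the present paper imports the result as a black box rather than reproving it.
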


\begin{lemma}\label{Cpointwise}(Corollary 3.2 in \cite{HHW})
For any $\eta\in \P^u$ subordinate to unstable manifolds and any $\e>0$,
\begin{equation*}
h_\mu(f|\eta)%=H_\mu(f^{-1}\xi|\xi)
=\lim_{n\to \infty}-\frac{1}{n}\log\mu_x^{\eta}(B^u_{n}(x,\epsilon)) \quad
\mu \ae x.
\end{equation*}
\end{lemma}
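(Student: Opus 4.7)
The plan is to sandwich the unstable dynamical ball $B^u_n(x,\epsilon)\cap\eta(x)$ between atoms of a refined partition $\alpha_0^{n-1}$ for a suitable $\alpha\in\P$, and then invoke a conditional Shannon--McMillan--Breiman theorem together with Lemma \ref{expansive}. For any $\alpha\in\P$ with $\mu(\partial\alpha)=0$, the conditional SMB theorem applied with the sub-$\sigma$-algebra $\mathcal{B}(\eta)$ gives $\lim_{n\to\infty}-\frac{1}{n}\log\mu_x^\eta(\alpha_0^{n-1}(x))=h_\mu(f,\alpha|\eta)=h_\mu(f|\eta)$ for $\mu$-a.e.\ $x$, where the last equality uses Lemma \ref{expansive}. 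Both inequalities in the desired identity will be extracted from this asymptotic.

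For the upper bound, I would choose $\alpha\in\P$ with $\diam\alpha<\epsilon/C$, where $C$ is the constant from \eqref{e:distance}. For $y\in\alpha_0^{n-1}(x)\cap\eta(x)$, the points $f^iy$ and $f^ix$ lie in a common local unstable piece (by $f$-invariance of $W^u$, once $\e_0$ is small), so $d^u(f^iy,f^ix)\leq C\,d(f^iy,f^ix)<\epsilon$. Hence $\alpha_0^{n-1}(x)\cap\eta(x)\subset B^u_n(x,\epsilon)$, giving $\mu_x^\eta(\alpha_0^{n-1}(x))\leq\mu_x^\eta(B^u_n(x,\epsilon))$ and therefore $\limsup_n-\frac{1}{n}\log\mu_x^\eta(B^u_n(x,\epsilon))\leq h_\mu(f|\eta)$.

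For the lower bound, I would use a Brin--Katok style covering argument. Fix $\alpha\in\P$ with $\diam\alpha$ of order $\epsilon$ and $\mu(\partial\alpha)=0$. Using uniform expansion along $E^u$ with rate $\lambda>1$, the set $B^u_n(x,\epsilon)\cap\eta(x)$ is contained in a $d^u$-ball of radius of order $\epsilon\lambda^{-n}$ in $W^u(x)$, while each atom $\alpha_0^{n-1}(y)\cap\eta(x)$ has $d^u$-diameter of order $\diam\alpha\cdot\lambda^{-n}$. Consequently the number of $\alpha_0^{n-1}$-atoms needed to cover $B^u_n(x,\epsilon)\cap\eta(x)$ is bounded by a constant $N=N(\epsilon,\alpha)$ independent of $n$. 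Using the disintegration $\mu=\int\mu_x^\eta\,d\mu$ to promote the $\mu$-a.e.\ SMB convergence above to $\mu_x^\eta$-a.e.\ convergence on $\eta(x)$, plus Egorov's theorem for uniformity on a large subset, one obtains $\mu_x^\eta(B^u_n(x,\epsilon))\leq N\,e^{-n(h_\mu(f|\eta)-\delta)}$ for all large $n$ and arbitrary $\delta>0$. Letting $n\to\infty$ and then $\delta\to 0$ yields $\liminf_n-\frac{1}{n}\log\mu_x^\eta(B^u_n(x,\epsilon))\geq h_\mu(f|\eta)$.

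The main obstacle is this covering step in the lower bound: establishing an $n$-independent bound on the multiplicity of the covering of the unstable dynamical ball by atoms of $\alpha_0^{n-1}$, and ensuring that the centers of the atoms used in the cover can be chosen from the full-measure set on which SMB convergence holds. Both rely on uniform continuity of the unstable foliation and a careful choice of $\alpha$ (e.g.\ with boundary transverse to $W^u$). Once this is in place, the fact that the limit is independent of $\epsilon$ follows automatically from uniform expansion on unstable leaves.
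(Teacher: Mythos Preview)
The paper does not prove this lemma; it is quoted from \cite{HHW} as a corollary of the conditional Shannon--McMillan--Breiman theorem established there. Your overall strategy---SMB for $\alpha_0^{n-1}$ together with Lemma~\ref{expansive}, plus a sandwiching of the Bowen ball---is the right one, and your upper bound is correct.

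The genuine gap is exactly the covering step you flag. Your justification compares the $d^u$-\emph{diameter} of $B^u_n(x,\epsilon)$ (of order $\epsilon\lambda^{-n}$) with the $d^u$-\emph{diameter} of an atom $\alpha_0^{n-1}(y)\cap\eta(x)$ (of order $(\diam\alpha)\lambda^{-n}$), but matching upper bounds on diameters gives no control on how many atoms are needed to cover: the atoms can be long thin slivers with arbitrarily small inradius, so the covering number need not be bounded in $n$. The clean fix is to reverse the order of your last two steps. First prove $\epsilon$-independence directly from uniform expansion: for $0<\epsilon_1<\epsilon_2$ choose $k$ with $\lambda^{-k}\epsilon_2\le\epsilon_1$; then $B^u_{n+k}(x,\epsilon_2)\subset B^u_n(x,\epsilon_1)\subset B^u_n(x,\epsilon_2)$ for every $n$, so both $\liminf$ and $\limsup$ of $-\tfrac{1}{n}\log\mu_x^\eta(B^u_n(x,\epsilon))$ are independent of $\epsilon$. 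Now the lower bound need only be proved for \emph{small} $\epsilon$, where the standard Brin--Katok count applies: whenever $d(f^ix,\partial\alpha)\ge 2\epsilon$ every $y\in B^u_n(x,\epsilon)$ satisfies $\alpha(f^iy)=\alpha(f^ix)$, so the number of $\alpha_0^{n-1}$-atoms meeting the Bowen ball is at most $(\#\alpha)$ raised to $|\{i<n:d(f^ix,\partial\alpha)<2\epsilon\}|$, which is $e^{o(n)}$ for $\mu$-a.e.\ $x$ once $\epsilon$ is small, by Birkhoff and $\mu(\partial\alpha)=0$. Your Egorov-type step (to transfer the SMB estimate from $x$ to the nearby atoms in the cover) is then the remaining ingredient; no transversality hypothesis on $\partial\alpha$ is needed.
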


\begin{lemma}\label{affine}(Proposition 2.14 in \cite{HHW})
For any $\a\in \P$ and $\eta\in \P^u$, the map
$\mu \mapsto H_\mu(\alpha|\eta)$
from $\mathcal{M}(M)$ to ${\mathbb R}^+\cup\{0\}$ is concave.
%{\color{blue}Why do we need it?  It is weaker than affineness.}

Furthermore, the map $\mu \mapsto h_\mu^u(f)$ from $\mathcal{M}_f(M)$ to
${\mathbb R}^+\cup\{0\}$ is affine.
\end{lemma}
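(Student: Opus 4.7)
The plan is to reduce everything to elementary concavity and subadditivity of $\phi(x) = -x\log x$ applied fibrewise along the measurable partition $\eta$. The key preliminary step is to relate the Rokhlin disintegration of $\mu = p\mu_1 + (1-p)\mu_2$ to those of $\mu_1, \mu_2$. Let $\pi\colon M \to M/\eta$ denote the quotient map and let $\bar\mu, \bar\mu_1, \bar\mu_2$ be the induced factor measures on $M/\eta$. Then $\bar\mu = p\bar\mu_1 + (1-p)\bar\mu_2$, so the Radon-Nikodym derivatives $\rho_i := d\bar\mu_i/d\bar\mu$ exist and satisfy $p\rho_1 + (1-p)\rho_2 \equiv 1$ $\bar\mu$-a.e. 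Testing the defining identity $\mu(B \cap \pi^{-1}(E)) = \int_E \mu_x^\eta(B)\,d\bar\mu(x)$ against Borel $B \subset M$ and measurable $E \subset M/\eta$, and invoking the essential uniqueness of the disintegration, yields the fibrewise convex-combination formula
\[
\mu_x^\eta \;=\; \lambda(x)\,\mu_{1,x}^\eta + (1-\lambda(x))\,\mu_{2,x}^\eta \qquad \bar\mu\text{-a.e.,}
\]
where $\lambda(x) := p\rho_1(x) \in [0,1]$.

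Rewriting $H_\mu(\alpha|\eta) = \int_{M/\eta} \sum_{A\in\alpha} \phi(\mu_x^\eta(A))\,d\bar\mu(x)$ and applying concavity of $\phi$ on each fiber, then integrating (using $\lambda\,d\bar\mu = p\,d\bar\mu_1$ and $(1-\lambda)\,d\bar\mu = (1-p)\,d\bar\mu_2$), gives
$H_\mu(\alpha|\eta) \geq p H_{\mu_1}(\alpha|\eta) + (1-p) H_{\mu_2}(\alpha|\eta)$,
which is the concavity claim on $\mathcal{M}(M)$. For the affine claim on $\mathcal{M}_f(M)$ I also need a matching upper bound. Applying the elementary subadditivity $\phi(x+y) \leq \phi(x) + \phi(y)$ with $x = \lambda(x)\mu_{1,x}^\eta(A)$ and $y = (1-\lambda(x))\mu_{2,x}^\eta(A)$, summing over $A \in \alpha$, and invoking Jensen's inequality for the concave function $\lambda \mapsto H(\lambda, 1-\lambda) := -\lambda\log\lambda - (1-\lambda)\log(1-\lambda)$ (noting $\int \lambda\,d\bar\mu = p$), I obtain
\[
H_\mu(\alpha|\eta) \;\leq\; p H_{\mu_1}(\alpha|\eta) + (1-p) H_{\mu_2}(\alpha|\eta) + H(p,1-p).
\]

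To pass from conditional entropy to unstable metric entropy, apply both inequalities to the refined partition $\alpha_0^{n-1}$ in place of $\alpha$, divide by $n$, and let $n \to \infty$. By Lemma~\ref{expansive}, $\tfrac{1}{n}H_\mu(\alpha_0^{n-1}|\eta) \to h_\mu^u(f)$ for any $\alpha \in \P$, $\eta \in \P^u$, and the correction $H(p,1-p)/n$ vanishes. Hence the two inequalities collapse to the affine identity
$h_{p\mu_1+(1-p)\mu_2}^u(f) = p\, h_{\mu_1}^u(f) + (1-p)\, h_{\mu_2}^u(f)$
on $\mathcal{M}_f(M)$.

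The main obstacle is establishing the fibrewise convex-combination formula itself: one must choose measurable versions of the three disintegrations $\{\mu_x^\eta\}$, $\{\mu_{1,x}^\eta\}$, $\{\mu_{2,x}^\eta\}$ that simultaneously satisfy the claimed identity on a common full-$\bar\mu$-measure set, and handle points where some $\rho_i$ vanishes so that the formula is unambiguous (for instance by redefining $\mu_{i,x}^\eta$ arbitrarily on the $\bar\mu$-null set $\{\rho_i = 0\}$, where the corresponding weight is zero). Once this identity is secured, the remainder is a routine exercise in the analytic properties of $\phi$.
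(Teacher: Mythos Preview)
The paper does not prove this lemma; it is imported verbatim from \cite{HHW} (Proposition~2.14 there) and used as a black box. So there is no ``paper's proof'' to compare against line by line.

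Your argument is correct and is the standard route for results of this type. The fibrewise convex-combination identity $\mu_x^\eta = \lambda(x)\mu_{1,x}^\eta + (1-\lambda(x))\mu_{2,x}^\eta$ with $\lambda(x)=p\,d\bar\mu_1/d\bar\mu$ is exactly the right starting point, and your verification via the defining property of disintegration plus essential uniqueness is sound (the caveat about $\{\rho_i=0\}$ is handled correctly since the corresponding weight vanishes there). The concavity step is then immediate from the concavity of $\phi(t)=-t\log t$. For the reverse inequality, your use of the elementary subadditivity $\phi(x+y)\le\phi(x)+\phi(y)$, followed by the identity $\phi(\lambda c)=\lambda\phi(c)-c\lambda\log\lambda$ summed over $A\in\alpha$, correctly produces the bounded defect $H(\lambda(x),1-\lambda(x))$; Jensen then gives the clean bound $H(p,1-p)$ (though even the cruder bound $\log 2$ would suffice for the limit). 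Finally, the passage to $h_\mu^u$ via Lemma~\ref{expansive} is legitimate precisely because you restrict to $\mathcal{M}_f(M)$, where that lemma applies to each of $\mu,\mu_1,\mu_2$.

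One minor remark: Lemma~\ref{expansive} gives the limit of $\frac{1}{n}H_\mu(\alpha_0^{n-1}|\eta)$ only for $f$-invariant $\mu$, so the two-sided sandwich you wrote is only asserted on $\mathcal{M}_f(M)$ --- which is exactly what the statement claims. Your proof is self-contained and matches what one expects the proof in \cite{HHW} to look like.
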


Recall that for each partition $\a\in \P$, the partition $\zeta$
given by $\zeta(x)=\a(x)\cap W^u_\loc(x)$ for any $x\in M$
is denoted by $\a^u$.
Conversely, for each partition $\eta\in \P^u$, there is a partition
$\b\in \P$ such that $\eta(x)=\b(x)\cap W^u_\loc(x)$ for any $x\in M$.
Denote such $\b$ by $\eta^{\not u}$.

\begin{lemma}(Proposition 2.15 in \cite{HHW})\label{uppersemicontinuous}
(a) Let $\nu\in \mathcal{M}(M)$.
For any $\a\in \P$ and $\eta\in \P^u$ with $\mu(\partial \a)=0$
and $\mu(\partial \eta^{\not u})=0$,
the map $\mu \mapsto H_\mu(\alpha|\eta)$ from $\mathcal{M}(M)$ to
${\mathbb R}^+\cup\{0\}$ is upper semi-continuous at $\mu$,
i.e.
$$
\limsup_{\nu\to \mu}H_{\nu}(\alpha|\eta)\leq H_\mu(\alpha|\eta).
$$

(b) The unstable entropy map $\mu \mapsto h_\mu^u(f)$ from
$\mathcal{M}_f(M)$ to ${\mathbb R}^+\cup\{0\}$ is
upper semi-continuous at $\mu$.
i.e.
$$
\limsup_{\nu\to \mu}h^u_{\nu}(f)\leq h_\mu^u(f).
$$
\end{lemma}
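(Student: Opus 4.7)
My plan is to reduce Lemma \ref{uppersemicontinuous} to the classical continuity of conditional entropy with respect to \emph{finite} partitions with $\mu$-null boundaries, first establishing (a) and then deducing (b) via Lemma \ref{expansive}.

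For part (a), fix $\alpha\in\P$ and $\eta\in\P^u$ with $\mu(\partial\alpha)=\mu(\partial\eta^{\not u})=0$. I would approximate $\eta$ from below by a sequence of finite partitions $\beta_k$ that are coarser than $\eta$ and whose generated $\sigma$-algebras $\mathcal{B}(\beta_k)$ increase to $\mathcal{B}(\eta)$ modulo $\mu$-null sets. A natural construction: pick finite Borel partitions $\tilde\beta_k$ of $M$ with $\diam\tilde\beta_k\to 0$ and $\mu(\partial\tilde\beta_k)=0$, then let $\beta_k(x)$ be the unstable saturation of $(\tilde\beta_k\vee\eta^{\not u})(x)$ inside $\eta^{\not u}(x)$, i.e.\ the union of plaques $W^u_{\loc}(y)\cap\eta^{\not u}(x)$ for $y\in(\tilde\beta_k\vee\eta^{\not u})(x)$. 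By construction $\beta_k$ is finite, coarser than $\eta$, and $\beta_k(x)\searrow\eta(x)$ as $k\to\infty$ using the continuity of $W^u$. The increasing martingale theorem then yields $H_\mu(\alpha|\beta_k)\searrow H_\mu(\alpha|\eta)$, and classical continuity gives $\limsup_{\nu\to\mu}H_\nu(\alpha|\beta_k)=H_\mu(\alpha|\beta_k)$ at each $k$, hence
\begin{equation*}
\limsup_{\nu\to\mu}H_\nu(\alpha|\eta)\le\limsup_{\nu\to\mu}H_\nu(\alpha|\beta_k)=H_\mu(\alpha|\beta_k)\xrightarrow{k\to\infty}H_\mu(\alpha|\eta).
\end{equation*}

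For part (b), choose $\alpha\in\P$ with $\mu(\partial\alpha)=0$ and take $\eta=\alpha^u\in\P^u$, so each $\alpha_0^{n-1}$ has $\mu$-null boundary by $f$-invariance. By Lemma \ref{expansive}, $h^u_\nu(f)=\lim_n\tfrac{1}{n}H_\nu(\alpha_0^{n-1}|\eta)$ for every $\nu\in\mathcal{M}_f(M)$, and a standard subadditivity argument (exploiting $f$-invariance together with the conditional-entropy chain rule) shows the limit is in fact the infimum, so $h^u_\nu(f)\le\tfrac{1}{n}H_\nu(\alpha_0^{n-1}|\eta)$ for every $n$. Applying part (a) to the finite partition $\alpha_0^{n-1}$ gives
\begin{equation*}
\limsup_{\nu\to\mu}h^u_\nu(f)\le\limsup_{\nu\to\mu}\tfrac{1}{n}H_\nu(\alpha_0^{n-1}|\eta)\le\tfrac{1}{n}H_\mu(\alpha_0^{n-1}|\eta),
\end{equation*}
and sending $n\to\infty$ yields $\limsup_{\nu\to\mu}h^u_\nu(f)\le h^u_\mu(f)$.

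The main obstacle is the construction in part (a): arranging simultaneously $\mu(\partial\beta_k)=0$ (handled by choosing the grids $\tilde\beta_k$ adapted to $\mu$, using that $\mu$ can charge only countably many boundaries of a concentric family of balls) and $\mathcal{B}(\beta_k)\nearrow\mathcal{B}(\eta)$ mod $\mu$-null sets, which ultimately rests on the continuity of the foliation $W^u$ and the bounded unstable diameter of the $\eta$-atoms. Once these are in place, both the martingale step and the classical continuity input are routine.
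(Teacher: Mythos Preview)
The paper does not prove this lemma: it is imported verbatim from \cite{HHW} (Proposition~2.15 there) and simply quoted here without argument. So there is no ``paper's proof'' to compare against; I evaluate your proposal on its own merits.

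Your outline for (a) is the natural one, and the obstacle you flag (building finite $\beta_k$ that are simultaneously coarser than $\eta$, have $\mu$-null boundary, and generate $\mathcal{B}(\eta)$) is real but manageable using a transversal to the unstable plaques inside each $\eta^{\not u}$-atom.

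The genuine gap is in (b). You assert that $n\mapsto H_\nu(\alpha_0^{n-1}\mid\eta)$ is subadditive for $\eta=\alpha^u$ ``by a standard subadditivity argument''. The chain rule and $f$-invariance give
\[
H_\nu(\alpha_0^{m+n-1}\mid\eta)\;\le\;H_\nu(\alpha_0^{m-1}\mid\eta)+H_\nu\bigl(\alpha_0^{n-1}\,\big|\,f^m(\eta\vee\alpha_0^{m-1})\bigr),
\]
so subadditivity would require $f^m(\eta\vee\alpha_0^{m-1})\ge\eta$. With $\eta=\alpha^u$ this fails: the set $f^m\bigl((\alpha_0^{m-1})^u(f^{-m}x)\bigr)$ lies in $\bigcap_{j=1}^{m}f^j\alpha(f^{-j}x)$, which has no reason to be contained in $\alpha(x)$, so it need not sit inside $\alpha^u(x)$. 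In fact the computation used in this very paper (the displayed line preceding \eqref{fVP1} in the proof of Theorem~A) shows only
\[
f^m(\eta\vee\alpha_0^{m-1})\;\ge\; f\alpha^u,
\]
not $\ge\alpha^u$. The remedy is to run your argument with $\eta=f\alpha^u$: then $f^m(f\alpha^u\vee\alpha_0^{m-1})\ge f\alpha^u$ does hold (same computation), so $b_n:=H_\nu(\alpha_0^{n-1}\mid f\alpha^u)$ is honestly subadditive, $h^u_\nu(f)=\inf_n b_n/n$ by Lemma~\ref{expansive}, and part (a) applied to $\alpha_0^{n-1}$ and $f\alpha^u$ finishes as you wrote. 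In short, your strategy for (b) is right but the choice $\eta=\alpha^u$ does not support the ``standard'' subadditivity you invoke; shifting to $f\alpha^u$ fixes it.
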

The second part of the above lemma also follows from Theorem D of \cite{Yang}.

\subsection{Proof of the variational principle}
At first, we prove Proposition \ref{variationalprinciple1} stated below, which is an easier half of the variational principle Theorem A. The following lemma is well-known.
\begin{lemma}\label{convex}
Suppose $0\leq p_1, \cdots, p_m \leq 1, s=p_1+\cdots + p_m$ and $a_1, \cdots, a_m \in \mathbb{R}$. Then
$$\sum_{i=1}^mp_i(a_i-\log p_i)\leq s\left(\log \sum_{i=1}^m e^{a_i}-\log s\right).$$
\end{lemma}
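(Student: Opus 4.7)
The inequality is the standard entropy/Gibbs-type bound. My plan is to reduce it by homogeneity to the case $\sum p_i = 1$ and then apply Jensen's inequality to the concave function $\log$.

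\textbf{Step 1: dispose of degenerate cases.} If $s=0$, then every $p_i=0$, and using the convention $0\log 0 = 0$ both sides equal $0$. Hence I may assume $s>0$, and I may also drop those indices $i$ with $p_i=0$ since they contribute nothing to the left-hand side.

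\textbf{Step 2: normalize.} Define $q_i := p_i/s$ for each $i$, so that $q_i>0$ and $\sum_i q_i = 1$. Rewriting the left-hand side,
\[
\sum_{i=1}^m p_i(a_i - \log p_i) \;=\; s\sum_{i=1}^m q_i\bigl(a_i - \log q_i - \log s\bigr) \;=\; s\!\left(\sum_{i=1}^m q_i(a_i-\log q_i) - \log s\right),
\]
so the claim becomes equivalent to
\[
\sum_{i=1}^m q_i(a_i - \log q_i) \;\le\; \log\!\sum_{i=1}^m e^{a_i},
\]
with $q_i>0$ and $\sum q_i=1$.

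\textbf{Step 3: apply Jensen's inequality.} Write the left-hand side as
\[
\sum_{i=1}^m q_i(a_i-\log q_i) \;=\; \sum_{i=1}^m q_i \log\!\frac{e^{a_i}}{q_i}.
\]
Since $\log$ is concave and $(q_i)$ is a probability vector, Jensen's inequality gives
\[
\sum_{i=1}^m q_i \log\!\frac{e^{a_i}}{q_i} \;\le\; \log\!\left(\sum_{i=1}^m q_i\cdot\frac{e^{a_i}}{q_i}\right) \;=\; \log\!\sum_{i=1}^m e^{a_i},
\]
which is the reduced inequality. Multiplying through by $s$ and rearranging recovers the statement of the lemma.

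\textbf{Main obstacle.} There is essentially none; this is a routine Jensen/Gibbs estimate. The only mild care needed is the bookkeeping around $s=0$ and $p_i=0$ so that the reduction to a probability vector is legitimate and no $\log 0$ appears; after that, one line of Jensen's inequality finishes the proof.
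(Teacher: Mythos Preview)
Your proof is correct; the normalization to a probability vector followed by Jensen's inequality for $\log$ is the standard argument, and your handling of the degenerate cases is fine (note that after dropping indices with $p_i=0$ the Jensen step yields $\log\sum_{i:\,p_i>0}e^{a_i}$, which is bounded above by $\log\sum_{i=1}^m e^{a_i}$, so the final inequality still follows). The paper does not actually give its own proof of this lemma: it declares it well-known and refers to Lemma~1.24 in Bowen's monograph, remarking only that the hypothesis $s\le 1$ there has been removed; your argument is precisely the classical proof found in that reference.
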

The above lemma is almost identical to Lemma 1.24 in \cite{Bo1}, except that we have removed the condition $s \leq 1$.

\begin{proposition}\label{variationalprinciple1}

Let $\mu$ be any $f$-invariant probability measure. Then
$$h_\mu^u(f)+\int_M\varphi d\mu \leq P^u(f, \varphi).$$
\end{proposition}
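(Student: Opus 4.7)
The plan is to adapt the classical Walters strategy for the easier direction of the variational principle (cf.\ Theorem 9.10 in \cite{W82}) to the conditional setting along unstable leaves. The essential ingredients are Lemma \ref{expansive}, which lets us compute $h_\mu^u(f)$ as $\lim_n \frac{1}{n} H_\mu(\alpha_0^{n-1}|\eta)$ for any $\alpha\in\mathcal{P}$ and $\eta\in\mathcal{P}^u$; the disintegration of $\mu$ along $\eta$; the convexity inequality Lemma \ref{convex}; and the bi-Lipschitz comparison (\ref{e:distance}) between $d$ and $d^u$ on local unstable leaves.

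\textbf{Setup, disintegration, and convexity.} Fix $\epsilon > 0$ small. Choose $\alpha\in\mathcal{P}$ with $\mu(\partial\alpha)=0$ and $\diam\alpha<\epsilon/(2C)$ (with $C$ as in (\ref{e:distance})), and let $\eta(x)=\alpha(x)\cap W^u_\loc(x)$ be the corresponding element of $\mathcal{P}^u$. By Lemma \ref{expansive} and $f$-invariance of $\mu$, it suffices to upper bound $\frac{1}{n}[H_\mu(\alpha_0^{n-1}|\eta)+\int S_n\varphi\,d\mu]$ for large $n$. Using the conditional measures $\mu_x^\eta$,
$$H_\mu(\alpha_0^{n-1}|\eta)+\int S_n\varphi\,d\mu=\int\left[H_{\mu_x^\eta}(\alpha_0^{n-1}|_{\eta(x)})+\int_{\eta(x)}S_n\varphi\,d\mu_x^\eta\right]d\mu(x).$$
For each $x$ and each $A\in\alpha_0^{n-1}$ with $A\cap\eta(x)\ne\emptyset$, pick $y_A(x)\in A\cap\eta(x)$ attaining $\sup_{y\in A\cap\eta(x)} S_n\varphi(y)$. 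Since these sups dominate the integral over $A\cap\eta(x)$, applying Lemma \ref{convex} with $s=1$, $p_A=\mu_x^\eta(A\cap\eta(x))$, and $a_A=S_n\varphi(y_A(x))$ yields
$$H_{\mu_x^\eta}(\alpha_0^{n-1}|_{\eta(x)})+\int_{\eta(x)}S_n\varphi\,d\mu_x^\eta\le\log\sum_A e^{S_n\varphi(y_A(x))}.$$

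\textbf{Separation via Walters refinement.} The main obstacle is that the transversal $F(x)=\{y_A(x)\}$ is automatically $(n,\epsilon)$-u-spanning on $\eta(x)$ (because points in the same element of $\alpha_0^{n-1}$ on the same unstable leaf have $d_n^u<\epsilon$ by the choice of $\diam\alpha$), but it need not be $(n,\epsilon)$-u-separated, so the right-hand side cannot be directly bounded by $\log P^u(f,\varphi,\epsilon,n,x,\delta)$. I resolve this with a standard Walters compactification: replace $\alpha$ by $\beta=\{B_0,B_1,\ldots,B_k\}$, where $B_j\subset A_j$ for $j\ge 1$ is compact and the $B_j$'s are pairwise $d$-separated by some $\delta_0>0$, while $\mu(B_0)$ is as small as desired. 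Then $H_\mu(\alpha_0^{n-1}|\eta)\le H_\mu(\beta_0^{n-1}|\eta)+nH_\mu(\alpha|\beta)$ with $H_\mu(\alpha|\beta)\to 0$ as $\mu(B_0)\to 0$; representatives drawn from elements of $\beta_0^{n-1}$ avoiding $B_0$ at every time $0,\ldots,n-1$ are automatically $(n,\delta_0)$-separated in $d$ and therefore in $d^u$ via (\ref{e:distance}); and the contribution of elements meeting $B_0$ is controlled by a combinatorial factor of order $\binom{n}{\lfloor n\mu(B_0)\rfloor}e^{n\|\varphi\|}$ that is absorbed into the error.

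\textbf{Passage to the limit.} Since $\eta(x)\subset\overline{W^u(x,\delta)}$ for any $\delta\ge C\diam\alpha$ and the separated representatives lie in $\eta(x)$, one obtains the pointwise bound $\log\sum_A e^{S_n\varphi(y_A(x))}\le\log P^u(f,\varphi,\delta_0,n,x,\delta)+(\text{error})$. Integrating against $\mu$, using $\int\log F\,d\mu\le\log\sup_x F$, dividing by $n$, and successively taking $\limsup_{n\to\infty}$, then $\epsilon,\delta_0\to 0$, then $\delta\to 0$ via Lemma \ref{smalldelta}, and finally $\mu(B_0)\to 0$, all error terms vanish and $h_\mu^u(f)+\int\varphi\,d\mu\le P^u(f,\varphi)$ follows. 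The hardest part is precisely the Walters compactification: the entropy correction $H_\mu(\alpha|\beta)$ and the bad-element combinatorics must be tracked in the conditional (per-$\mu_x^\eta$) framework rather than the unconditional one, and some care is needed in interchanging $\sup_x$ with $\limsup_n$ when passing to the final supremum in the definition of $P^u(f,\varphi)$.
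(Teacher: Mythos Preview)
Your overall framework—disintegrate along $\eta$, apply Lemma~\ref{convex} fibrewise, then produce a u-separated set on each leaf—is sound, but the step you label ``Walters compactification'' contains a genuine gap. After passing to $\beta=\{B_0,B_1,\dots,B_k\}$, the elements of $\beta_0^{n-1}$ that visit $B_0$ at some times are not controlled by a factor of order $\binom{n}{\lfloor n\mu(B_0)\rfloor}$: that bound would require knowing that \emph{only} about $n\mu(B_0)$ of the coordinates are $B_0$, which is a measure-theoretic (ergodic-theorem) statement, not a combinatorial one, and it is certainly not available for the conditional measures $\mu_x^\eta$ without further input. The correct Walters bookkeeping groups elements of $\beta_0^{n-1}$ by the \emph{set} $S\subset\{0,\dots,n-1\}$ of $B_0$-visits; for each fixed $S$ the chosen representatives are $(n,\delta_0)$-separated, so the full sum is bounded by $2^n\,P^u(f,\varphi,\delta_0,n,x,\delta)$. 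After dividing by $n$ this leaves an unavoidable $\log 2$, which in the classical proof is removed by applying the inequality to $f^m$ and using $h_\mu(f^m)=mh_\mu(f)$, $P(f^m,S_m\varphi)=mP(f,\varphi)$. You never invoke this power trick, and the needed identities $h_\mu^u(f^m)=mh_\mu^u(f)$ and $P^u(f^m,S_m\varphi)=mP^u(f,\varphi)$ are not established in the paper; they are provable but require separate arguments.

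The paper sidesteps all of this by taking a different route. It first reduces to ergodic $\mu$ via \eqref{e:ergodicdecom}, and then, instead of computing $H_\mu(\alpha_0^{n-1}|\eta)$ directly, invokes the local entropy formula Lemma~\ref{Cpointwise} to get the Bowen-ball estimate $\mu_x^\eta(B_n^u(y,\epsilon))\le e^{-n(h_\mu^u(f|\eta)-\rho)}$ on a large set. Lemma~\ref{convex} is then applied with $p_i=\mu_x^\eta(B_n^u(y(z),\epsilon))$ for $z$ ranging over an $(n,\epsilon/2)$ u-\emph{spanning} set of $\overline{W^u(x,\delta)}\cap E_n$, giving the inequality in one stroke with no compactification, no $\log 2$, and no power trick. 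If you want to repair your argument, either insert the full Walters $2^n$ bound plus the power trick (and prove the two power laws for $h^u_\mu$ and $P^u$), or switch to the paper's Shannon--McMillan--Breiman approach via Lemma~\ref{Cpointwise}.
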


\begin{proof}
Let $\mu=\int_{\mathcal{M}^e_f(M)}\nu d\tau(\nu)$ be the unique ergodic decomposition
where $\tau$ is a probability measure on the Borel subsets of $\mathcal{M}_f(M)$ and $\tau(\mathcal{M}^e_f(M))=1$.
Since $\mu \mapsto h_\mu^u(f)$ is affine and upper semi-continuous by Lemma \ref{affine} and \ref{uppersemicontinuous}, then so is $h_\mu^u(f)+\int_M \varphi d\mu$ and hence
\begin{equation}\label{e:ergodicdecom}
h_\mu^u(f)+\int_M \varphi d\mu=\int_{\mathcal{M}^e_f(M)}\Big(h_\nu^u(f)+\int_M \varphi d\nu\Big) d\tau(\nu)
\end{equation}
by a classical result in convex analysis (cf. Fact A.2.10 on p. 356 in \cite{Do}). So we only need to prove the proposition for
ergodic measures.

Suppose that $\mu$ is ergodic. Let $\rho>0$ be arbitrary. Take $\eta\in \P^u$ subordinate to unstable manifolds, and then take $\e>0$.
By Lemma~\ref{Cpointwise}, we have
\begin{equation*}
\lim_{n \to \infty}-\frac{1}{n}\log\mu_y^\eta(B_{n}^u(y,\e))
\geq h_\mu^u(f|\eta)  \qquad \mu-\text{a.e.}\ y.
\end{equation*}
Hence for $\mu$-a.e. $y$, there exists $N(y)=N(y,\e)>0$ such that if
$n\geq N(y)$, then
\begin{equation*}
\mu_y^\eta(B_{n}^u(y,\e)) \leq e^{-n(h_\mu^u(f|\eta)-\rho)},
\end{equation*}
and
\begin{equation}\label{e:function}
\frac{1}{n}(S_n\varphi)(y)\geq \int_M \varphi d\mu-\rho.
\end{equation}
Denote $E_n=E_n(\e)=\{y\in M: N(y)=N(y,\e)\leq n\}$.
Then $\mu\big(\cup_{n=1}^\infty E_n\big)=1$. So there exists $n>0$ large enough such that $\mu(E_n)>1-\rho$.
Hence, there exists $x\in M$ such that
$\mu_x^\eta(E_n)=\mu_x^\eta(E_n\cap \eta(x))>1-\rho$.
Fix such $n$ and $x$. If $y \in \eta(x)$, $\mu_y^\eta=\mu_x^\eta$.  We have
\begin{equation}\label{e:entropyestimate}
\mu_x^\eta(B_{n}^u(y,\e)) \leq e^{-n(h_\mu(f|\eta)-\rho)}, \qquad
\forall y\in E_n\cap \eta(x).
\end{equation}

Now we take $\delta>0$ such that $W^u(x, \delta)\supset \eta(x)$. Let $F$ be an $(n,\e/2)$ u-spanning set of $\overline{W^u(x, \delta)}\cap E_n$ satisfying
\begin{equation*}
\begin{aligned}
\overline{W^u(x,\delta)}\cap E_n \subset &\bigcup_{z\in F}B_{n}^u(z,\e/2),
\end{aligned}
\end{equation*}
and $B_{n}^u(z,\e/2)\cap E_n \neq \emptyset$ for any $z\in F$. Let $y(z)$ be an arbitrary point in $B_{n}^u(z,\e/2)\cap E_n$. We have
\begin{equation}\label{e:vpesti}
\begin{aligned}
1-\rho& < \mu_x^\eta(\overline{W^u(x,\delta)}\cap E_n)\leq \mu_x^\eta(\bigcup_{z\in F}B_{n}^u(z,\e/2))\\
&\leq \sum_{z\in F}\mu_x^\eta(B_{n}^u(z,\e/2))\leq \sum_{z\in F}\mu_x^\eta(B_{n}^u(y(z),\e)).
\end{aligned}
\end{equation}

Using \eqref{e:function}, \eqref{e:entropyestimate}, and then applying Lemma \ref{convex} with $p_i=\mu_x^\eta(B_{n}^u(y(z),\e))$, $a_i=(S_n\varphi)(y(z))$, we have
\begin{equation*}
\begin{aligned}
&\sum_{z\in F}\mu_x^\eta(B_{n}^u(y(z),\e))\left(n\left(\int_M \varphi d\mu-\rho\right)+n(h_\mu^u(f|\eta)-\rho)\right)\\
\leq &\sum_{z\in F}\mu_x^\eta(B_{n}^u(y(z),\e))((S_n\varphi)(y(z))-\log \mu_x^\eta(B_{n}^u(y(z), \e)))\\
\leq &\left(\sum_{z\in F}\mu_x^\eta(B_{n}^u(y(z),\e))\right)\left(\log \sum_{z\in F}\exp((S_n\varphi)(y(z)))-\log \sum_{z\in F}\mu_x^\eta(B_{n}^u(y(z),\e))\right).
\end{aligned}
\end{equation*}
Then combining \eqref{e:vpesti},
\begin{equation}
\begin{aligned}\label{e:estim}
&n\left(\int_M \varphi d\mu-\rho\right)+n(h_\mu^u(f|\eta)-\rho)\\
\leq &\log \sum_{z\in F}\exp((S_n\varphi)(y(z)))-\log \sum_{z\in F}\mu_x^\eta(B_{n}^u(y(z),\e))\\
\leq &\log \sum_{z\in F}\exp((S_n\varphi)(y(z)))-\log (1-\rho).
\end{aligned}
\end{equation}
Let $\tau_\e:=\{|\varphi(x)-\varphi(y)|:d(x,y)\leq \e\}$. Then for any $z\in F$, $\exp((S_n\varphi)(y(z)))\leq \exp((S_n\varphi)(z)+n\tau_\e)$. Dividing by $n$ and taking the $\limsup$ on both sides of \eqref{e:estim}, we have
\begin{equation*}
\begin{aligned}
\int_M \varphi d\mu+h_\mu^u(f|\eta)-2\rho \leq \limsup_{n\to \infty}\frac{1}{n}\log \sum_{z\in F}\exp((S_n\varphi)(z))+\tau_\e.
\end{aligned}
\end{equation*}
Moreover, we can choose a sequence of $F$ such that
$$\limsup_{n\to \infty}\frac{1}{n}\log \sum_{z\in F}\exp((S_n\varphi)(z))\leq P^u(f, \varphi).$$
Since $\rho>0$ is arbitrary and $\tau_\e \to 0$ as $\e \to 0$, one has $\int_M \varphi d\mu+h_\mu^u(f|\eta) \leq P^u(f, \varphi)$.

%Let ${\mathcal B}_0$ be the sub-$\sigma$-algebra generated by invariant sets
%of $f$ and $\zeta$ be the partition determined by ${\mathcal B}_0$.
%Let $\{\mu^\zeta_x\}$ be the family of conditional measures with respect
%to $\zeta$.
%Hence for $\mu$-a.e. $x\in M$, $\mu^\zeta_x$ is an ergodic measure.

%Take any $\a\in\P$, $\eta\in \P^u$.
%Since for $\mu$-almost every $x\in M$, the entire unstable manifold $W^u(x)$
%is contained in $\zeta(x)$, we have $\eta(x)\subset \zeta(x)$.
%Consequently, $(\mu_x^\zeta)_x^\eta=\mu_x^\eta$.

%Thus
%\begin{equation}\label{fdecomp}
%\begin{aligned}
%h_\mu^u(f)+\int_M \varphi d\mu=&h_\mu(f, \a|\eta)+\int_M \varphi d\mu\\
%=&\int_M \lim_{n\to\infty}\frac{1}{n}I_\mu(\a_0^{n-1}|\eta)(x) d\mu +\int_M \varphi d\mu\\
%=&\int_M \left(h_{\mu^\zeta_x}(f, \a|\eta)+\int_M \varphi d\mu_x^\zeta\right)d\mu(x)\\
%\leq& \int_M P^u(f, \varphi)d\mu(x)=P^u(f, \varphi).
%\end{aligned}
%\end{equation}
\end{proof}

\begin{proof}[Proof of Theorem A]
 We start to prove that for any $\rho>0$, there exists $\mu \in \mathcal{M}_f(M)$ such that $h_{\mu}^u(f)+\int_M \varphi d\mu \geq P^u(f, \varphi)-\rho$. Combining with Proposition \ref{variationalprinciple1}, we obtain the first equality in Theorem A.

%Take $\e>0$ small enough.
%Let $S_n$ be an $(n,\e)$ u-separated set of $\overline{W^u(x,\delta)}$
%with cardinality $N^u(f,\e,n,x,\delta)$.
For some $\delta>0$ small enough, we can find a point $x\in M$ such that
$$P^u(f, \varphi, \overline{W^u(x,\delta)})\geq P^u(f, \varphi)-\rho.$$
Take $\e>0$ small enough. Let $E_n$ be an $(n,\e)$ u-separated set of $\overline{W^u(x,\delta)}$ with cardinality $N^u(f,\e,n,x,\delta)$ such that
$$\log \sum_{y\in E_n}\exp ((S_n\varphi)(y)) \geq \log P^u(f,\varphi, \e,n,x,\delta)-1. $$
Define
$$\nu_n:=\frac{\sum_{y\in E_n}\exp((S_n\varphi)(y))\delta_y}{\sum_{z\in E_n}\exp((S_n\varphi)(z))},$$
and
$$\mu_n:=\frac{1}{n}\sum_{i=0}^{n-1}f^i\nu_n.$$
Since the set $\mathcal{M}(M)$ of all probability measures on $M$ is a compact space with weak* topology, there exists a subsequence $\{n_k\}$ of natural numbers such that $\lim_{k\to \infty}\mu_{n_k}=\mu$. Obviously $\mu \in \mathcal{M}_f(M)$.

We can choose a partition $\eta\in\P^u$ such that $W^u(x,\delta)\subset \eta(x)$ (by shrinking $\delta$ if necessary).
That is, $W^u(x,\delta)$ is contained in a single element of $\eta$.
Then choose $\a\in \P$ such that $\mu(\partial\alpha)=0$,
and $\text{diam}(\alpha) < \frac{\e}{C}$
where $C>1$ is as in \eqref{e:distance}.
Hence we have $\log N^u(f,\e,n,x,\delta)=H_{\nu_n}(\alpha_0^{n-1}|\eta)$.

Fix a natural numbers $q>1$.  For any natural number $n>q$,
$j=0, 1, \cdots, q-1$, put $a(j)=[\frac{n-j}{q}]$, where
$[a]$ denotes the integer part of $a>0$.
Then
$$\bigvee_{i=0}^{n-1}f^{-i}\alpha
=\bigvee_{r=0}^{a(j)-1}f^{-(rq+j)}\alpha_0^{q-1} \vee \bigvee_{t\in S_j}f^{-t}\alpha,$$
where $S_j=\{0,1,\cdots, j-1\}\cup \{j+qa(j), \cdots, n-1\}$.

For a partition $\a\in \P$, denote by $\a^u$ the partition in $\P^u$
whose elements are given by $\a^u(x)=\a(x)\cap W^u_\loc(x)$. Note that
\[
f^{rq}\big(\bigvee_{i=0}^{r-1}f^{-iq}\alpha_0^{q-1}\vee f^j\eta\big)
=f^{rq}\big(\alpha_0^{rq-1}\vee f^j\eta\big)
=f\a\vee \cdots \vee f^{rq}\a\vee f^{rq+j}\eta
\ge f\a^u.
\]

We can get that
\begin{equation}\label{fVP1}
\begin{aligned}
&H_{\nu}(\bigvee_{r=0}^{a(j)-1}f^{-rq}\alpha_0^{q-1}|f^j\eta)\\
= &H_{\nu}(\alpha_0^{q-1}|f^j\eta)
+ \sum_{r=1}^{a(j)-1}H_{f^{rq}\nu}\Big(\alpha_0^{q-1}
 \Big|f^{rq}\big(\bigvee_{i=0}^{r-1}f^{-iq}\alpha_0^{q-1}\vee f^j\eta\big) \Big)\\
\le&  H_{\nu}(\alpha_0^{q-1}|f^j\eta)
+ \sum_{r=1}^{a(j)-1}H_{f^{rq}\nu}(\alpha_0^{q-1}|f\a^u).
\end{aligned}
\end{equation}
Also,
\begin{equation}\label{fVP2}
H_{\nu}(\bigvee_{r=0}^{a(j)-1}f^{-(rq+j)}\alpha_0^{q-1}|\eta)
=H_{f^j\nu}(\bigvee_{r=0}^{a(j)-1}f^{-rq}\alpha_0^{q-1}|f^j\eta).
\end{equation}
Replacing $\nu$ by $\nu_n$ and $f^j\nu_n$ in \eqref{fVP2} and  \eqref{fVP1}
respectively we get
\begin{equation*}
\begin{aligned}
&\ \ \ \log \sum_{y\in E_n}\exp ((S_n\varphi)(y))\\
&=\sum_{y\in E_n} \nu_n(\{y\})\big(-\log \nu_n(\{y\})+(S_n\varphi)(y)\big)\\ %\\
&=H_{\nu_n}(\alpha_0^{n-1}|\eta)+\int_M (S_n\varphi) d\nu_n\\
&=H_{\nu_n}\big(\bigvee_{r=0}^{a(j)-1}f^{-(rq+j)}\alpha_0^{q-1} \vee
   \bigvee_{t\in S_j}f^{-t}\alpha|\eta\big)+\int_M (S_n\varphi) d\nu_n\\
&\leq \sum_{t\in S_j}H_{\nu_n}(f^{-t}\alpha|\eta)
 +H_{\nu_n}\big(\bigvee_{r=0}^{a(j)-1}f^{-rq-j}\alpha_0^{q-1}|\eta\big)+\int_M (S_n\varphi) d\nu_n\\
&\leq \sum_{t\in S_j}H_{\nu_n}(f^{-t}\alpha|\eta)
 +H_{f^j\nu_n}\big(\bigvee_{r=0}^{a(j)-1}f^{-rq}\alpha_0^{q-1}|f^j\eta\big)+\int_M (S_n\varphi) d\nu_n\\
&\leq \sum_{t\in S_j}H_{\nu_n}(f^{-t}\alpha|\eta)
+ H_{f^j\nu_n}\big(\alpha_0^{q-1}|f^j\eta\big)
+ \sum_{r=1}^{a(j)-1}H_{f^{rq+j}\nu_n}(\alpha_0^{q-1}|f\a^u)+\int_M (S_n\varphi) d\nu_n.
\end{aligned}
\end{equation*}
It is clear that $\text{card} S _j \leq 2q$.
Denote by $d$ the number of elements of $\a$.
Summing the inequalities over $j$ form $0$ to $q-1$ and
dividing by $n$, by Lemma \ref{affine} we get
\begin{equation}\label{e:mu}
\begin{aligned}
&\frac{q}{n}\log \sum_{y\in E_n}\exp ((S_n\varphi)(y)) \\
\leq &\frac{1}{n}\sum_{j=0}^{q-1} \sum_{t\in S_j}H_{\nu_n}(f^{-t}\alpha|\eta)
   +\frac{1}{n}\sum_{j=0}^{q-1}H_{f^j\nu_n}(\alpha_0^{q-1}|f^j\eta)
   +\frac{1}{n}\sum_{i=0}^{n-1}H_{f^i\nu_n}(\alpha_0^{q-1}|f\a^u)+\frac{q}{n}\int_M (S_n\varphi) d\nu_n\\
\leq &\frac{2q^2}{n}\log d
+\frac{1}{n}\sum_{j=0}^{q-1}H_{f^j\nu_n}(\alpha_0^{q-1}|f^j\eta)
 +H_{\mu_n}(\alpha_0^{q-1}|f\a^u)+q\int_M \varphi d\mu_n.
\end{aligned}
\end{equation}
Let $\{n_k\}$ be a sequence of natural numbers such that
\begin{enumerate}
  \item $\mu_{n_k} \to \mu$ as $k\to \infty$;
  \item $\disp \lim_{k\to \infty}\frac{1}{n_k}\log P^u(f,\varphi, \e,n_k,x,\delta)= \limsup_{n\to \infty}\frac{1}{n}\log P^u(f,\varphi, \e,n,x,\delta)$.
%\item $\nu_{n_k} \to \nu$ as $k\to \infty$ for some $\nu\in \mathcal{M}(M)$.
\end{enumerate}
Since $\mu(\partial \alpha)=0$, and $\mu$ is invariant,
$\mu(\partial \alpha_0^{q-1})=0$ for any $q\in {\mathbb N}$. By Lemma \ref{uppersemicontinuous},
$$
\limsup_{k \to \infty}H_{\mu_{n_k}}(\alpha_0^{q-1}|f\a^u)
\leq H_\mu(\alpha_0^{q-1}|f\a^u).
$$
Thus replacing $n$ by $n_k$ in \eqref{e:mu} and letting $k\to \infty$,
we get
$$q\limsup_{n\to \infty}\frac{1}{n}\log P^u(f,\varphi, \e,n,x,\delta) \leq H_{\mu}(\alpha_0^{q-1}|f\a^u)+q\int_M \varphi d\mu.$$
Then by Lemma \ref{expansive},
\begin{equation*}
P^u(f, \varphi, \overline{W^u(x,\delta)})
\leq \lim_{q \to \infty}\frac{1}{q}H_{\mu}(\alpha_0^{q-1}|f\a^u)+\int_M \varphi d\mu
=h_\mu^u(f)+\int_M \varphi d\mu.
\end{equation*}
Thus $h_\mu^u(f)+\int_M \varphi d\mu\geq P^u(f, \varphi)-\rho$.
Since $\rho$ is arbitrary, we get by combining Proposition \ref{variationalprinciple1}
\begin{equation*}
P^u(f, \varphi)=\sup \Big\{h_\mu^u(f)+\int_M \varphi d\mu: \mu \in \mathcal{M}_f(M)\Big\}.
\end{equation*}

We prove the second equation in Theorem A.

Let $\rho>0$ be sufficiently small.
Then there exists an invariant measure $\mu$ such that
$h_\mu^u(f)+\int_M \varphi d\mu > P^u(f, \varphi)-\rho/2$. By \eqref{e:ergodicdecom}, there exists an ergodic measure $\nu$ such that
$$h_{\nu}^u(f)+\int_M \varphi d\nu>h_\mu^u(f)+\int_M \varphi d\mu-\rho/2> P^u(f, \varphi)-\rho.$$
Since $\rho$ is arbitrary, we have
\[P^u(f, \varphi)=\sup \Big\{h_\mu^u(f)+\int_M \varphi d\mu: \mu \in \mathcal{M}^e_f(M)\Big\}.\]
\end{proof}

The proof of Corollary A.1 is straightforward, hence is omitted.
\begin{proof}[Proof of Corollary A.2]
The inequality $P^u(f, \varphi)\leq P(f, \varphi)$ follows from the definition directly.

If $f$ is $C^{1+\alpha}$ and there is no positive Lyapunov exponents in the center direction, then by Ledrappier-Young formula \cite{LY2} and Theorem A in \cite{HHW}, $h_\mu(f)= h_\mu^u(f)$ for any $\mu\in \mathcal{M}^e_f(M)$. Then by Theorem A and the classical variational principle for pressure (cf. Theorem 9.10 in \cite{W82}),
\begin{equation*}
\begin{aligned}
P^u(f, \varphi)&=\sup \Big\{h_\mu^u(f)+\int_M \varphi d\mu: \mu \in \mathcal{M}^e_f(M)\Big\}\\
&=\sup \Big\{h_\mu(f)+\int_M \varphi d\mu: \mu \in \mathcal{M}^e_f(M)\Big\}\\
&=P(f, \varphi).
\end{aligned}
\end{equation*}
\end{proof}

\section{U-equilibrium states}\label{uequi}
In this section, we shall first give some fundamental properties for the set of u-equilibrium states, proving Theorem B. Then for the  particular potential $\varphi^u=-\log \|Df|_{E^u}\|$ we relate the u-equilibrium states at $\varphi^u$ to the Gibbs u-states of $f$.
%\subsection{Unstable entropy map}

%{\color{red}The next two lemmas are proved in [Hu-Hua-Wu], so the proof should be omitted.  Probably we should state them as lemmas,}
%\begin{proposition}(Cf. Proposition 3.1 in \cite{LS})\label{partition1}
%Let $\mu\in \mathcal{M}_f(M)$. Then there exists a measurable partition $\xi$ of $M$ such that
%\begin{enumerate}
  %\item $\xi$ is a partition subordinate to $W^u$, i.e. for $\mu$-almost every $x$, $\xi(x) \subset W^u(x)$ and contains an open neighborhood of $x$ in $W^u(x)$;
  %\item $\xi$ is increasing, i.e. $f^{-1}\xi \geq \xi$;
  %\item $\bigvee_{n=1}^\infty f^{-n}\xi=\varepsilon$, where $\varepsilon$ is the partition of $M$ into points;
  %\item $\bigwedge_{n=1}^\infty f^n\xi=\mathcal{H}(\Pi^+)$, where $\Pi^+$ is the partition of $M$ into global unstable manifolds, and $\mathcal{H}(\Pi^+)$ is the measurable hull of $\Pi^+$.
%\end{enumerate}
%\end{proposition}
%In the remainder of the paper, we always let $\xi$ be the measurable partition as in Proposition \ref{partition1}. The following lemmata will be used in the proof of the variational principle Theorem A.
%\begin{lemma}\label{countablepartition1}(Cf. Corollary 2.12 in \cite{HHW})
%$h_\mu(f,\xi)=\lim_{n\to \infty}\frac{1}{n}H((\alpha\vee \xi)_0^{n-1}|\xi)$ for any $\alpha \in \mathcal{P}_M$.
%\end{lemma}

\subsection{Properties of $\mathcal{M}^u_\varphi(M,f)$}

\begin{proof}[Proof of Theorem B]
It follows from Lemma \ref{affine} that $\mu \mapsto h_\mu^u(f)+\int_M \varphi d\mu$ is affine. Hence (1) holds.

It follows from Lemma \ref{uppersemicontinuous} that $\mu \mapsto h_\mu^u(f)+\int_M \varphi d\mu$ is upper semi-continuous.
The set $\mathcal{M}^u_\varphi(M,f)$ is nonempty because an upper semi-continuous function on a compact space attains its supremum.
If $\mu_n \in \mathcal{M}^u_\varphi(M,f)$ and $\mu_n \to \mu$ in $\mathcal{M}_f(M)$,
then $h_\mu^u(f)+\int_M \varphi d\mu\geq \limsup_{n\to \infty}h_{\mu_n}^u(f)+\int_M \varphi d\mu_n=P^u(f, \varphi)$. This together with Theorem A proves that $\mathcal{M}^u_\varphi(M,f)$ is compact.

If $\mu \in \mathcal{M}^u_\varphi(M,f)$ is ergodic, then it is an extreme point of $\mathcal{M}_f(M)$ and hence of $\mathcal{M}^u_\varphi(M,f)$.
Now let $\mu \in \mathcal{M}^u_\varphi(M,f)$ be an extreme point of $\mathcal{M}^u_\varphi(M,f)$, and suppose that $\mu=p\mu_1+(1-p)\mu_2$ for some $p\in [0,1]$. Combining Lemma \ref{affine}, $P^u(f, \varphi)=h_\mu^u(f)+\int_M \varphi d\mu=p(h_{\mu_1}^u(f)+\int_M \varphi d\mu_1)+(1-p)(h_{\mu_2}^u(f)+\int_M \varphi d\mu_2)$. By the variational principle Theorem A, we must have $\mu_1, \mu_2 \in \mathcal{M}^u_\varphi(M,f)$. Hence $\mu_1=\mu_2=\mu$ and $\mu$ is an extreme point of $\mathcal{M}_f(M)$. Thus $\mu$ is ergodic. This proves (3).

Now we prove (4). By Proposition \ref{proppressure} (2), (4) and (6), $P^u(f, \varphi)=P^u(f, \psi)+c$. On the other hand, it is easy to see that $\int_M \varphi d\mu=\int_M \psi d\mu+c$, and hence $h_\mu^u(f)+\int_M \varphi d\mu=h_\mu^u(f)+\int_M \psi d\mu+c$. Thus $\mathcal{M}^u_\varphi(M,f)=\mathcal{M}^u_\psi(M,f)$.
\end{proof}

\subsection{Gibbs u-states}
There are two leading cases for a potential $\varphi$. First, $\varphi$ is the constant function $0$. In this case, the unstable topological pressure is just the unstable topological entropy (Proposition \ref{proppressure}(1)) and Theorem B(2) gives existence of measure of maximal unstable metric entropy.

Second,  $\varphi^u(x):=-\log \|\det Df|_{E^u}(x)\|$. Theorem B(2) gives existence of u-equilibrium states with respect to $\varphi^u$. We start to prove Theorem C, which claims that when $f$ is $C^{1+\alpha}$ such u-equilibrium states coincide with Gibbs u-states first studied in \cite{PS}.

\begin{lemma}\label{gibbs}(Cf. Proposition 5.2 in \cite{Yang})
If $f$ is $C^{1+\alpha}$ and $\mu \in \mathcal{M}_f(M)$, then
$$h^u_\mu(f)\leq \int_M -\varphi^u d\mu.$$
The equality holds if and only if $\mu$ is a Gibbs u-state of $f$.
\end{lemma}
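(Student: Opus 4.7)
The plan is to handle the inequality and the equality case separately, following the pattern of Ruelle's inequality and Ledrappier's characterization of SRB measures.

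\textbf{Inequality.} Fix $\eta\in\mathcal P^u$ subordinate to unstable manifolds and a finite partition $\alpha\in\mathcal P$ with small diameter $\epsilon>0$; by Theorem \ref{ThmA},
\[
h_\mu^u(f)=\lim_{n\to\infty}\frac{1}{n}H_\mu(\alpha_0^{n-1}|\eta).
\]
The key geometric fact is that, for $\mu$-a.e.\ $x$, the number of elements of $\alpha_0^{n-1}$ meeting $\eta(x)$ is at most $C\prod_{k=0}^{n-1}\|\det Df|_{E^u(f^k x)}\|$: the binding constraint comes from $f^{-(n-1)}\alpha$, whose restriction to $\eta(x)$ has cells of unstable volume of order $\epsilon^{\dim E^u}\prod_{k=0}^{n-2}\|\det Df|_{E^u(f^k x)}\|^{-1}$. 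Bounding $H_\mu(\alpha_0^{n-1}|\eta)$ by the log of this cardinality and using $f$-invariance gives $h_\mu^u(f)\le \int_M -\varphi^u\,d\mu$.

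\textbf{Equality.} I would use the identification $h_\mu^u(f)=H_\mu(\xi|f\xi)=-\int\log\mu_x^\xi(f\xi(x))\,d\mu$ via Ledrappier-Young's increasing partition $\xi$ subordinate to unstable manifolds, recalled after Theorem \ref{ThmA}. If $\mu$ is a Gibbs u-state, the conditional $\mu_x^\xi$ has a density with respect to Lebesgue on $\xi(x)$; $f$-invariance forces this density to satisfy a cocycle identity involving $\|\det Df|_{E^u}\|$, and substituting it into the entropy formula, after telescoping and an $f$-invariance cancellation, yields $h_\mu^u(f)=\int\log\|\det Df|_{E^u}\|\,d\mu=-\int\varphi^u\,d\mu$. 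Conversely, if equality holds, decompose $\mu_x^\xi=\mu_x^{ac}+\mu_x^{s}$ into absolutely continuous and singular parts on $\xi(x)$; a refinement of the counting argument above shows that a non-trivial singular part produces a strict inequality, contradicting equality, so $\mu_x^{s}=0$ and $\mu$ is a Gibbs u-state.

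\textbf{Main obstacle.} The hard step is the converse direction of the equality: converting a numerical entropy identity into absolute continuity of all unstable conditionals. It follows the scheme of Ledrappier's theorem, and the $C^{1+\alpha}$ hypothesis enters crucially because Pesin's absolute continuity of unstable holonomies, which needs a H\"older derivative, is what allows density information to be transported coherently along the unstable foliation.
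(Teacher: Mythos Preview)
The paper does not prove this lemma at all: it is quoted verbatim as Proposition~5.2 of \cite{Yang} and used as a black box in the proof of Theorem~C. So there is no ``paper's own proof'' to compare against.

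That said, your sketch is the standard Ruelle--Ledrappier--Young route and is essentially what lies behind the cited result. Two remarks on the details. For the inequality, the cardinality bound you write depends on the Jacobian \emph{along the orbit of the specific point $x$}; to pass from a supremum over the cell to the value at $x$ (so that integrating against $\mu$ and using invariance gives exactly $\int -\varphi^u\,d\mu$) you need bounded distortion along unstable leaves, and this already uses the H\"older regularity of $Df$. For the converse in the equality case, your ``singular part forces strict inequality'' is the right intuition but is not how the argument actually runs in Ledrappier or Ledrappier--Young Part~I: one builds a candidate system of absolutely continuous conditionals with the natural Jacobian density, and a Jensen-type inequality applied to $H_\mu(\xi\mid f\xi)$ shows that equality in the entropy formula forces the true conditionals to coincide with this candidate. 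The decomposition $\mu_x^{\xi}=\mu_x^{ac}+\mu_x^{s}$ by itself does not obviously interact with the entropy in the way you suggest; the actual mechanism is convexity of $-\log$, not a direct counting refinement.
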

We characterize Gibbs u-states of $f$ by u-equilibrium states of $f$ with respect to $\varphi^u$:

\begin{proof}[Proof of Theorem C]
By Theorem A and Lemma \ref{gibbs},
$$P^u(f, \varphi^u)=\sup\Big\{h_\mu^u(f)+\int_M \varphi^u d\mu\Big\} = 0.$$
By Lemma \ref{gibbs} again, $\mu$ is a Gibbs u-state of $f$ if and only if $\mu$ is a u-equilibrium state of $\varphi^u$.
\end{proof}

Corollary C.1 is already obtained in the proof of Theorem C.
%{\color{red} I think that the meterials in this subsection should go either Section 3, or 5, depending which one is more closely related.}
\begin{proof}[Proof of Corollary C.2]
Since u-equilibrium state for any continuous function $\varphi$ always exists by Theorem B(2), we know from Theorem C that Gibbs u-state always exists.
\end{proof}

\section{Unstable topological pressure determines $\mathcal{M}_f(M)$} \label{determin}

\begin{proof}[Proof of Theorem D]
(1) If $\mu \in \mathcal{M}_f(M)$, then $\int_M \varphi d\mu \leq P^u(f, \varphi)$ by the variational principle Theorem A. Now let  $\mu: \mathcal{B}(M) \to \mathbb{R}$ be a finite signed measure such that $\int_M \varphi d\mu \leq P^u(f, \varphi)$, $\forall \varphi \in C(M, \mathbb{R}).$ First we show that $\mu$ is a measure. Let $\varphi\geq 0$. If $\epsilon>0$ and $n>0$ is large enough, then
\begin{equation*}
\begin{aligned}
\int_M n(\varphi+\epsilon)d\mu&=-\int_M -n(\varphi+\epsilon)d\mu\\
&\geq -P^u(f, -n(\varphi+\epsilon))\\
&\geq -(h_{\text{top}}^u(f)+\sup(-n(\varphi+\epsilon)))\\
&=-h_{\text{top}}^u(f)+n\inf(\varphi+\epsilon)>0.
\end{aligned}
\end{equation*}
Hence $\int_M (\varphi+\epsilon)d\mu>0$. Thus $\int_M \varphi d\mu\geq 0$ and $\mu$ is a measure.

Next we show $\mu$ is a probability measure. For $n\in \mathbb{Z}$, $\int_M nd\mu\leq P^u(f,n)=h_{\text{top}}^u(f)+n.$ If $n>0$, then $\mu(M)\leq \frac{1}{n}h_{\text{top}}^u(f)+1$. Hence $\mu(M)\leq 1.$ If $n<0$, then $\mu(M)\geq \frac{1}{n}h_{\text{top}}^u(f)+1$. Hence $\mu(M)\geq 1.$ It follows that $\mu(M)=1.$

At last we show $\mu \in \mathcal{M}_f(M)$. For $n\in \mathbb{Z}$, $n\int_M (\varphi\circ f-\varphi)d\mu \leq P^u(f, n(\varphi\circ f-\varphi))=h_{\text{top}}^u(f)$ by Proposition \ref{proppressure} (6) and (1). If $n>0$, then $\int_M (\varphi\circ f-\varphi)d\mu\leq \frac{1}{n}h_{\text{top}}^u(f)$. Hence $\int_M (\varphi\circ f-\varphi)d\mu\leq 0.$ If $n<0$, then $\int_M (\varphi\circ f-\varphi)d\mu\geq \frac{1}{n}h_{\text{top}}^u(f)$. Hence $\int_M (\varphi\circ f-\varphi)d\mu\geq 0.$ Therefore $\int_M \varphi\circ f d\mu=\int_M \varphi d\mu.$ So $\mu \in \mathcal{M}_f(M)$.

(2)
By the variational principle, $h_\nu^u(f)\leq \inf\left\{P^u(f, \varphi)-\int_M \varphi d\nu: \varphi \in C(M, \mathbb{R})\right\}.$ To prove the other direction, let $b>h_\nu^u(f)$. Put
$$C=\{(\mu, t)\in \mathcal{M}_f(M)\times \mathbb{R}: 0\leq t\leq h_\mu^u(f)\}.$$
By Lemma \ref{affine}, $C$ is a convex subset of $C(M, \mathbb{R})^* \times \mathbb{R}$, where the weak$^*$-topology is used on $C(M, \mathbb{R})^*$. Then $(\nu, b)\notin \bar{C}$ as $\mu \mapsto h_\mu^u(f)$ is upper semi-continuous at $\nu$. By the classical result, there is a continuous linear functional $F: C(M, \mathbb{R})^*\times \mathbb{R} \to \mathbb{R}$ such that $F(\mu, t)\leq F(\nu, b)$ for any $(\mu, t)\in \bar{C}$. Suppose that $F$ has the form $F(\mu,t)=\int_M \psi d\mu+td$ for some $\psi \in C(M, \mathbb{R})$ and $d\in \mathbb{R}$. Then $\int_M \psi d\mu+td<\int_M \psi d\nu+bd$ for any $(\mu, t)\in \bar{C}$. In particular, $\int_M \psi d\mu+h_\mu^u(f)d<\int_M \psi d\nu+bd$ for any $\mu\in \mathcal{M}_f(M)$. Setting $\mu=\nu$, we have $h_\nu^u(f)d<bd$. Hence $d>0$. We have
$$\int_M \frac{\psi}{d} d\mu+h_\mu^u(f)<\int_M \frac{\psi}{d} d\nu+b$$
for any $\mu \in \mathcal{M}_f(M)$. By the variational principle,
$$P^u\Big(f, \frac{\psi}{d}\Big)\leq \int_M \frac{\psi}{d} d\nu+b.$$
Then
$$b\geq P^u\Big(f, \frac{\psi}{d}\Big)- \int_M \frac{\psi}{d} d\nu\geq \inf\{P^u(f, \varphi)-\int_M\varphi d\nu: \varphi \in C(M, \mathbb{R})\}.$$
Therefore, $h_\nu^u(f)\geq \inf\{P^u(f, \varphi)-\int_M\varphi d\nu: \varphi \in C(M, \mathbb{R})\}.$
\end{proof}

\section{Differentiability properties of the unstable topological pressure}\label{Differentiability}
In this section, we consider the differentiability properties of the unstable topological pressure. We shall first give the relation between the u-tangent functionals and the u-equilibrium states, then consider the Gateaux differentiability and Fr\'{e}chet differentiability of the unstable topological pressure. The equivalence of the Gateaux differentiability of  $P^u(f, \cdot)$ and the existence of unique unstable tangent functional $P^u(f, \cdot)$ at a given $\varphi$ is obtained, and several necessary and
sufficient conditions for $P^u(f, \cdot)$ to be Fr\'{e}chet differentiable at a given $\varphi$ are given.

\subsection{U-tangent functionals}
\begin{proof}[Proof of Theorem E]
If $\mu \in \mathcal{M}^u_\varphi(M,f)$, then $P^u(f, \varphi)=h_\mu^u(f)+\int_M \varphi d\mu$. We have
\begin{equation*}
\begin{aligned}
P^u(f, \varphi+\psi)-P^u(f, \varphi)&\geq h_\mu^u(f)+\int_M (\varphi+\psi)d\mu-h_\mu^u(f)-\int_M \varphi d\mu\\
&=\int_M \psi d\mu \quad \forall \psi \in C(M, \mathbb{R})
\end{aligned}
\end{equation*}
where the variational principle Theorem A is used in the first inequality.
Therefore $\mu \in t_\varphi^u(M, f).$

Conversely, let $\mu \in t_\varphi^u(M, f).$ Then for $\forall \psi \in C(M, \mathbb{R})$,
\begin{equation*}
\begin{aligned}
P^u(f, \varphi+\psi)-P^u(f, \varphi)\geq \int_M \psi d\mu=\int_M (\varphi+\psi)d\mu-\int_M \varphi d\mu,
\end{aligned}
\end{equation*}
which implies that
\begin{equation*}
\begin{aligned}
P^u(f, \varphi+\psi)-\int_M (\varphi+\psi)d\mu\geq P^u(f, \varphi)-\int_M \varphi d\mu.
\end{aligned}
\end{equation*}
Since $\psi \in C(M, \mathbb{R})$ is arbitrary, one has
\begin{equation*}
\begin{aligned}
\inf \Big\{P^u(f, h)-\int_M hd\mu: h\in C(M, \mathbb{R})\Big\}\geq P^u(f, \varphi)-\int_M \varphi d\mu.
\end{aligned}
\end{equation*}
By Theorem D(2), we have $h_\mu^u(f)\geq P^u(f, \varphi)-\int_M \varphi d\mu$. Combining with the variational principle Theorem A, we have $P^u(f, \varphi)=h_\mu^u(f)+\int_M \varphi d\mu$. Thus $\mu \in \mathcal{M}^u_\varphi(M,f).$
\end{proof}

\subsection{Gateaux differentiability}
Since $P^u(f, \cdot)$ is convex (Proposition \ref{proppressure}(5)), for any $\varphi, \psi\in C(M, \mathbb{R})$ the map $t\mapsto \frac{1}{t}(P^u(f, \varphi+t\psi)-P^u(f, \varphi))$ is increasing and hence the following two limits exist:
\begin{definition}
$$d^+P^u(f, \varphi)(\psi):=\lim_{t\to 0^+}\frac{1}{t}(P^u(f, \varphi+t\psi)-P^u(f, \varphi))$$
and
$$d^-P^u(f, \varphi)(\psi):=\lim_{t\to 0^-}\frac{1}{t}(P^u(f, \varphi+t\psi)-P^u(f, \varphi)).$$
\end{definition}
The following proposition is immediate.
\begin{proposition}\label{derivative}
\begin{enumerate}
  \item $d^-P^u(f, \varphi)(\psi)=-d^+P^u(f, \varphi)(-\psi)$,
  \item $d^-P^u(f, \varphi)(\psi)\leq d^+P^u(f, \varphi)(\psi)$.
\end{enumerate}
\end{proposition}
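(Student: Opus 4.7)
The proposition is purely a statement about one-sided derivatives of a real convex function, so the plan is to reduce everything to elementary convex analysis applied to the scalar function $g(t):=P^u(f,\varphi+t\psi)$. By Proposition \ref{proppressure}(5), $P^u(f,\cdot)$ is convex on $C(M,\mathbb{R})$, and restricting to the affine line $\{\varphi+t\psi:t\in\mathbb{R}\}$ gives that $g:\mathbb{R}\to\mathbb{R}$ is convex. The two quantities $d^+P^u(f,\varphi)(\psi)$ and $d^-P^u(f,\varphi)(\psi)$ are precisely the right and left derivatives of $g$ at $0$, which exist by the monotonicity of the difference quotient of a convex function noted just before the statement.

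For part (1), I would just perform the substitution $s=-t$ in the definition of $d^-P^u(f,\varphi)(\psi)$:
$$d^-P^u(f,\varphi)(\psi)=\lim_{t\to 0^-}\frac{P^u(f,\varphi+t\psi)-P^u(f,\varphi)}{t}=\lim_{s\to 0^+}\frac{P^u(f,\varphi+s(-\psi))-P^u(f,\varphi)}{-s},$$
and this last limit is $-d^+P^u(f,\varphi)(-\psi)$ by definition. For part (2), I would invoke the three-slope lemma for convex functions: for any $s<0<t$, writing $0=\lambda s+(1-\lambda)t$ with $\lambda=t/(t-s)\in(0,1)$ and applying convexity of $g$ yields
$$\frac{g(s)-g(0)}{s}\le \frac{g(t)-g(0)}{t}.$$
Letting $s\to 0^-$ on the left and $t\to 0^+$ on the right preserves the inequality and gives $d^-P^u(f,\varphi)(\psi)\le d^+P^u(f,\varphi)(\psi)$. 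There is no serious obstacle — the only thing to be careful about is the sign flip when dividing by the negative quantity $st$ in the three-slope computation; otherwise both parts are immediate from the convexity asserted in Proposition \ref{proppressure}(5).
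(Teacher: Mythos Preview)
Your argument is correct and is exactly the standard verification the paper has in mind; the paper itself simply declares the proposition ``immediate'' from the convexity of $P^u(f,\cdot)$ and the monotonicity of the difference quotient, without writing out any details.
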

Recall that the unstable topological pressure $P^u(f, \cdot)$ is said to be Gateaux differentiable at $\varphi$ if
$$\lim_{t\to 0}\frac{1}{t}(P^u(f, \varphi+t\psi)-P^u(f, \varphi))$$
exists for any $\psi\in C(M, \mathbb{R})$ (See Definition \ref{Gateaux}). By Proposition \ref{derivative}, $P^u(f, \cdot)$ is Gateaux differentiable at $\varphi$ if and only if for any $\psi \in C(M, \mathbb{R})$
$$d^+P^u(f, \varphi)(\psi)=-d^+P^u(f, \varphi)(-\psi).$$
\begin{lemma}\label{derivative1}
For $\varphi, \psi\in C(M, \mathbb{R})$, $d^+P^u(f, \varphi)(\psi)=\sup\{\int_M \psi d\mu: \mu \in t_\varphi^u(M, f)\}.$
\end{lemma}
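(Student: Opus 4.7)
The plan is to prove the two inequalities separately. The easy direction, $d^+P^u(f,\varphi)(\psi)\geq\sup\{\int_M\psi\,d\mu:\mu\in t_\varphi^u(M,f)\}$, comes straight from the definition: for any $\mu\in t_\varphi^u(M,f)$ and any $t>0$, plugging $t\psi$ into the u-tangent inequality gives $P^u(f,\varphi+t\psi)-P^u(f,\varphi)\geq t\int_M\psi\,d\mu$, and dividing by $t$ and letting $t\to 0^+$ yields the desired bound.

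For the reverse inequality I would use a Hahn--Banach/Riesz argument to manufacture a u-tangent functional that attains $d^+P^u(f,\varphi)(\psi)$. Fix $\varphi$ and define $q:C(M,\mathbb{R})\to\mathbb{R}$ by $q(\chi):=d^+P^u(f,\varphi)(\chi)$. The convexity of $P^u(f,\cdot)$ (Proposition~\ref{proppressure}(5)) implies that the difference quotients $t\mapsto\frac{1}{t}(P^u(f,\varphi+t\chi)-P^u(f,\varphi))$ are monotone in $t>0$ and that $q$ is subadditive; positive homogeneity is immediate. So $q$ is a sublinear functional on $C(M,\mathbb{R})$. On the one-dimensional subspace $\mathbb{R}\psi$ define $L_0(t\psi):=tq(\psi)$. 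For $t\geq 0$ domination $L_0(t\psi)\leq q(t\psi)$ is just positive homogeneity; for $t<0$, writing $t=-s$ with $s>0$, it reduces to $q(\psi)+q(-\psi)\geq q(0)=0$, which holds by subadditivity. Hence $L_0$ is dominated by $q$ on $\mathbb{R}\psi$, and the Hahn--Banach theorem extends it to a linear functional $L$ on $C(M,\mathbb{R})$ with $L\leq q$ everywhere and $L(\psi)=q(\psi)$.

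Next I would upgrade $L$ to a signed measure. The Lipschitz estimate in Proposition~\ref{proppressure}(4) gives $q(\chi)\leq\|\chi\|$, so $L(\chi)\leq\|\chi\|$ and $-L(\chi)=L(-\chi)\leq\|\chi\|$, meaning $|L(\chi)|\leq\|\chi\|$. By the Riesz representation theorem there exists a finite signed Borel measure $\mu$ with $L(\chi)=\int_M\chi\,d\mu$ for every $\chi\in C(M,\mathbb{R})$. To show $\mu\in t_\varphi^u(M,f)$, apply $L\leq q$ at $\chi$ together with the monotonicity of the difference quotient (taking $t=1$):
\begin{equation*}
\int_M\chi\,d\mu=L(\chi)\leq q(\chi)\leq P^u(f,\varphi+\chi)-P^u(f,\varphi),\qquad\forall\chi\in C(M,\mathbb{R}),
\end{equation*}
which is exactly the defining inequality of a u-tangent functional. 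Finally $\int_M\psi\,d\mu=L(\psi)=q(\psi)=d^+P^u(f,\varphi)(\psi)$, so the supremum on the right-hand side is attained and the two sides of the claimed equality coincide.

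The main obstacle, in my view, is the Hahn--Banach extension step: one has to verify carefully that the candidate functional on $\mathbb{R}\psi$ is dominated by the sublinear $q$ (the case $t<0$ is slightly subtle and relies on $q(0)=0$), and then translate the abstract extension into an honest signed measure via Riesz, using the $\|\cdot\|$-Lipschitz bound on $P^u$ to obtain continuity. Once a signed measure $\mu$ is in hand, the identification with a u-tangent functional and with an element of $\mathcal{M}^u_\varphi(M,f)$ (via Theorem E) is automatic, and in particular Theorem~D(1) is implicitly recovered because $\int_M\chi\,d\mu\leq P^u(f,\varphi+\chi)-P^u(f,\varphi)\leq P^u(f,\chi)$ when combined with shifts of $\varphi$.
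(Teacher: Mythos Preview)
Your proof is correct and follows essentially the same Hahn--Banach/Riesz approach as the paper: define a linear functional on the line $\mathbb{R}\psi$ with value $d^+P^u(f,\varphi)(\psi)$ at $\psi$, extend it under a dominating convex/sublinear functional, and represent the extension by a signed measure which is then a u-tangent functional attaining the supremum. The only cosmetic difference is that you take the one-sided derivative $q(\chi)=d^+P^u(f,\varphi)(\chi)$ as the dominating sublinear functional and explicitly verify continuity via the Lipschitz bound before invoking Riesz, whereas the paper dominates directly by the convex function $\chi\mapsto P^u(f,\varphi+\chi)-P^u(f,\varphi)$ and leaves these checks implicit.
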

\begin{proof}
If $\mu \in t_\varphi^u(M, f)$, then $\int_M \psi d\mu \leq \frac{1}{t}(P^u(f, \varphi+t\psi)-P^u(f, \varphi))$ for $\forall t>0$. So $\int_M \psi d\mu \leq d^+P^u(f, \varphi)(\psi).$

Conversely, let $a=d^+P^u(f, \varphi)(\psi)$. Define a linear functional $\gamma$ on $\{t\psi: t\in \mathbb{R}\}$ by $\gamma(t\psi)=ta$. By the convexity of $P^u(f, \cdot)$, $\gamma(t\psi)=ta\leq P^u(f, \varphi+t\psi)-P^u(f, \varphi).$ Using Hahn-Banach theorem, $\gamma$ can be extended to a linear functional on $C(M, \mathbb{R})$ such that
$$\gamma (h)\leq P^u(f, \varphi+h)-P^u(f, \varphi).$$
By Riesz representation theorem, there exists $\mu \in t_\varphi^u(M, f)$ such that
$$\int_M \psi d\mu=\gamma(\psi)=a=d^+P^u(f, \varphi)(\psi).$$
\end{proof}

\begin{proof}[Proof of Theorem F]
If $P^u(f, \cdot)$ is Gateaux differentiable at $\varphi$, then for any $\psi \in C(M, \mathbb{R})$,
$$d^+P^u(f, \varphi)(\psi)=-d^+P^u(f, \varphi)(-\psi).$$
By Lemma \ref{derivative1}, we have for any $\psi \in C(M, \mathbb{R})$,
\begin{equation*}
\begin{aligned}
\sup\left\{\int_M \psi d\mu: \mu \in t_\varphi^u(M, f)\right\}&=-\sup\left\{\int_M (-\psi) d\mu: \mu \in t_\varphi^u(M, f)\right\}\\
&=\inf\left\{\int_M \psi d\mu: \mu \in t_\varphi^u(M, f)\right\}.
\end{aligned}
\end{equation*}
It follows that $t_\varphi^u(M, f)$ consists of a single element $\mu_\varphi$.

Conversely, suppose that $t_\varphi^u(M, f)$ consists of a single element $\mu_\varphi$. Then by Lemma \ref{derivative1}
$$d^+P^u(f, \varphi)(\psi)=\int_M \psi d\mu_\varphi=-\int_M (-\psi) d\mu_\varphi=-d^+P^u(f, \varphi)(-\psi).$$
So $P^u(f, \cdot)$ is Gateaux differentiable at $\varphi$.
\end{proof}
Corollary F.1 follows directly from Theorems E and F.

\subsection{Fr\'{e}chet differentiability}

\begin{lemma}\label{uniquetf}
$P^u(f, \cdot)$ has a unique u-tangent functional at $\varphi$ if and only if there is a unique measure $\mu_\varphi$ such that whenever $(\mu_n)\subset \mathcal{M}_f(M)$ with $h_{\mu_n}^u(f)+\int_M \varphi d\mu_n\to P^u(f, \varphi)$ we have $\mu_n \to \mu_\varphi$ as $n\to \infty.$
\end{lemma}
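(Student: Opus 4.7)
The plan is to reduce the statement to the uniqueness of the u-equilibrium state, via Theorem E, and then translate that uniqueness into the stated convergence property by combining the compactness of $\mathcal{M}_f(M)$ with the upper semicontinuity of the unstable entropy map $\mu \mapsto h^u_\mu(f)$ (Lemma \ref{uppersemicontinuous}). By Theorem E we have $t^u_\varphi(M,f)=\mathcal{M}^u_\varphi(M,f)$, so the hypothesis ``unique u-tangent functional at $\varphi$'' is equivalent to $\mathcal{M}^u_\varphi(M,f)$ consisting of a single element $\mu_\varphi$. Thus the task is to show that this latter condition is equivalent to the convergence property in the statement.

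For the forward direction, assume $\mathcal{M}^u_\varphi(M,f)=\{\mu_\varphi\}$, and let $(\mu_n)\subset \mathcal{M}_f(M)$ satisfy $h^u_{\mu_n}(f)+\int_M\varphi\,d\mu_n \to P^u(f,\varphi)$. Since $\mathcal{M}_f(M)$ is weak$^*$-compact, it suffices to show that every weak$^*$-limit point of $(\mu_n)$ equals $\mu_\varphi$. Let $\mu_{n_k}\to \mu$; by Lemma \ref{uppersemicontinuous} and continuity of $\nu \mapsto \int_M\varphi\,d\nu$,
$$h^u_\mu(f)+\int_M\varphi\,d\mu \;\geq\; \limsup_{k\to\infty}\Big(h^u_{\mu_{n_k}}(f)+\int_M\varphi\,d\mu_{n_k}\Big) \;=\; P^u(f,\varphi).$$
The variational principle (Theorem A) forces the reverse inequality, hence $\mu\in \mathcal{M}^u_\varphi(M,f)=\{\mu_\varphi\}$, so $\mu=\mu_\varphi$ and the whole sequence converges to $\mu_\varphi$.

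For the reverse direction, suppose $\mu_\varphi$ is the unique measure with the stated convergence property. By Theorem B(2), $\mathcal{M}^u_\varphi(M,f)$ is nonempty, so pick any $\nu \in \mathcal{M}^u_\varphi(M,f)$; the constant sequence $\mu_n\equiv \nu$ then trivially satisfies $h^u_{\mu_n}(f)+\int_M\varphi\,d\mu_n \to P^u(f,\varphi)$, so by assumption $\mu_n \to \mu_\varphi$, forcing $\nu=\mu_\varphi$. Hence $\mathcal{M}^u_\varphi(M,f)=\{\mu_\varphi\}$, and Theorem E gives $t^u_\varphi(M,f)=\{\mu_\varphi\}$.

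The main (and only) subtle step is the forward direction, where one must handle the fact that $h^u_{\mu_n}(f)$ itself need not converge; upper semicontinuity still yields the right inequality on the limit because it is the sum $h^u_\mu(f)+\int\varphi\,d\mu$ that passes to a $\limsup$ bound, and the variational principle then pins the limit down in $\mathcal{M}^u_\varphi(M,f)$. No other ingredients beyond Theorems A, B, E and Lemma \ref{uppersemicontinuous} are needed.
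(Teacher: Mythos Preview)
Your proof is correct and follows essentially the same approach as the paper: both use Theorem E to identify u-tangent functionals with u-equilibrium states, then rely on upper semicontinuity of the unstable entropy map together with the constant-sequence trick for the reverse direction. Your forward direction is actually a bit more explicit than the paper's, since you spell out the compactness/subsequence argument showing that every limit point must lie in $\mathcal{M}^u_\varphi(M,f)$, whereas the paper leaves this step implicit.
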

\begin{proof}
If $h_{\mu_n}^u(f)+\int_M \varphi d\mu_n\to P^u(f, \varphi)$ and $\mu_n \to \mu$ for some $\mu \in \mathcal{M}_f(M)$ as $n\to \infty$, by the upper semi-continuity of unstable entropy map, we have
$$h_{\mu}^u(f)+\int_M \varphi d\mu= P^u(f, \varphi).$$
Namely, $\mu\in \mathcal{M}^u_\varphi(M,f)=t_\varphi^u(M, f).$ Hence there is only one such measure $\mu$ which is denoted by $\mu_\varphi.$ This finishes the proof for the ``only if'' part.

For``if" part, let $\mu$ be a u-tangent functional to $P^u(f, \cdot)$ at $\varphi$. Note that such $\mu$ exists by Theorem E and Theorem B(2). Put $\mu_n =\mu$. Then
$h_{\mu_n}^u(f)+\int_M \varphi d\mu_n\to P^u(f, \varphi)$. Thus $\mu=\mu_\varphi$. It follows that $\mu_\varphi$ is the unique u-tangent functional of $P^u(f, \cdot)$ at $\varphi$.
\end{proof}

Recall that $P^u(f, \cdot)$ is called Fr\'{e}chet differentiable at $\varphi$ if $\exists \gamma \in C(M, \mathbb{R})^*$ such that
$$\lim_{\psi \to 0}\frac{|P^u(f, \varphi+\psi)-P^u(f, \varphi)-\gamma(\psi)|}{\|\psi\|}=0$$
(see Definition \ref{frechet}). If $P^u(f, \cdot)$ is Fr\'{e}chet differentiable at $\varphi$, then
$$\lim_{t\to 0}\frac{|P^u(f, \varphi+t\psi)-P^u(f, \varphi)-t\gamma(\psi)|}{t\|\psi\|}=0.$$
Thus $P^u(f, \cdot)$ is Gateaux differentiable at $\varphi$. Moreover, $\gamma(\psi)=\int_M \psi d\mu_\varphi$ where $\mu_\varphi$ is the unique u-tangent functional to $P^u(f, \cdot)$ at $\varphi$ by Theorem F and Lemma \ref{derivative1}.

\begin{proof}[Proof of Theorem G]
$(1) \Rightarrow (2)$: Suppose that $P^u(f, \cdot)$ is Fr\'{e}chet differentiable at $\varphi$. By Theorem F, let $\mu_\varphi$ be the unique u-tangent functional at $\varphi$. Let $(\mu_n)\subset \mathcal{M}_f(M)$ with $h_{\mu_n}^u(f)+\int_M \varphi d\mu_n\to P^u(f, \varphi)$ as $n\to \infty.$ Put $\epsilon_n:=P^u(f, \varphi)-h_{\mu_n}^u(f)-\int_M \varphi d\mu_n$. For any $\epsilon \in (0, \frac{1}{2})$, there exists $\delta>0$ such that whenever $\|\psi\|<\delta$ we have
$$0\leq P^u(f, \varphi+\psi)-P^u(f, \varphi)-\int_M \psi d\mu_\varphi\leq \epsilon \|\psi\|.$$
Then
\begin{equation*}
\begin{aligned}
\int_M \psi d\mu_n-\int_M \psi d\mu_\varphi &= P^u(f, \varphi)+\int_M \psi d\mu_n -P^u(f, \varphi)-\int_M \psi d\mu_\varphi\\
&=h_{\mu_n}^u(f)+\int_M \varphi d\mu_n+\epsilon_n+\int_M \psi d\mu_n -P^u(f, \varphi)-\int_M \psi d\mu_\varphi\\
&\leq P^u(f, \varphi+\psi)-P^u(f, \varphi)-\int_M \psi d\mu_\varphi+\epsilon_n\\
&\leq \epsilon\delta+\epsilon_n.
\end{aligned}
\end{equation*}
This is true for $-\psi$ too, and hence we have $|\int_M \psi d\mu_n-\int_M \psi d\mu_\varphi|\leq \epsilon\delta+\epsilon_n$ whenever $\|\psi\|\leq \delta.$  Thus
\begin{equation*}
\begin{aligned}
\|\mu_n-\mu_\varphi\| &=\sup\{ |\int_M \psi d\mu_n-\int_M \psi d\mu_\varphi|: \|\psi\|\leq 1\}\\
&=\frac{1}{\delta}\sup\{ |\int_M \psi d\mu_n-\int_M \psi d\mu_\varphi|: \|\psi\|\leq \delta\}\\
&\leq \epsilon+\frac{\epsilon_n}{\delta}< 2\epsilon \quad \text{\ for large enough\ }n.
\end{aligned}
\end{equation*}
So $\|\mu_n-\mu_\varphi\|\to 0$.

$(2) \Rightarrow (3)$: Assume $(2)$ holds. By Lemma \ref{uniquetf}, $\mu_\varphi$ is the unique member of $t^u_\varphi(M,f)$. By Theorem A, there exist $(\mu_n)$ ergodic with $h_{\mu_n}^u(f)+\int_M \varphi d\mu_n\to P^u(f, \varphi)$. By $(2)$, $\|\mu_n -\mu_\varphi\|\to 0$. Recall that distinct ergodic measures have norm-distance $2$. Thus there exists $N\in \mathbb{N}$ such that $\mu_n=\mu_\varphi$ for any $n\geq N$. This implies $P^u(f, \varphi)>\sup\{h_{\mu}^u(f)+\int_M \varphi d\mu: \mu \text{\ is ergodic and\ }\mu \neq \mu_\varphi\}.$

$(3) \Rightarrow (4)$: Assume $(3)$ holds. By Theorem E, $\mu_\varphi \in \mathcal{M}_\varphi(M,f)$. Put $a=P^u(f, \varphi)-\sup\{h_{\mu}^u(f)+\int_M \varphi d\mu: \mu \text{\ is ergodic and\ }\mu \neq \mu_\varphi\}.$ Define
$$V:=\{\mu \in \mathcal{M}_f(M): \Big|\int_M \varphi d\mu-\int_M \varphi d\mu_\varphi\Big|<a/2\}.$$
If $\mu \in V$ ergodic and $\mu\neq \mu_\varphi$, then
\begin{equation*}
\begin{aligned}
h_\mu^u(f)&\leq h_\mu^u(f)+\int_M \varphi d\mu-\int_M \varphi d\mu_\varphi+a/2\\
&\leq P^u(f, \varphi)-\int_M \varphi d\mu_\varphi-a/2\\
&=h_{\mu_\varphi}^u(f)-a/2.
\end{aligned}
\end{equation*}
Thus $(4)$ is proved.

$(4) \Rightarrow (3)$: Assume $(4)$ holds. Assume that $P^u(f, \varphi)=\sup\{h_{\mu}^u(f)+\int_M \varphi d\mu: \mu \text{\ is ergodic and\ }\mu \neq \mu_\varphi\}.$ Then there exist $\mu_n\neq \mu_\varphi$ ergodic such that $h_{\mu_n}^u(f)+\int_M \varphi d\mu_n \to P^u(f, \varphi)$. By Lemma \ref{uniquetf}, $\mu_n\to \mu_\varphi$ and hence $\mu_n\in V$ eventually. Thus $\limsup_{n\to \infty}h_{\mu_n}^u(f)+\int_M \varphi d\mu_\varphi=P^u(f, \varphi)$. This implies that $\limsup_{n\to \infty}h_{\mu_n}^u(f)=h_{\mu_\varphi}^u(f)$. A contradiction to $(4)$. So $(3)$ holds.

$(3) \Rightarrow (5)$: Assume $(3)$ holds. Put
$$a=P^u(f, \varphi)-\sup\Big\{h_{\mu}^u(f)+\int_M \varphi d\mu: \mu \text{\ is ergodic and\ }\mu \neq \mu_\varphi\Big\}.$$
If $\|\varphi-\psi\|<a/2$, then by Proposition \ref{proppressure} (4),
\begin{equation*}
\begin{aligned}
&\sup\Big\{h_{\mu}^u(f)+\int_M \psi d\mu: \mu \text{\ is ergodic and\ }\mu \neq \mu_\varphi\Big\}\\
\leq &P^u(f,\varphi)+\|\varphi-\psi\|-a\\
\leq &P^u(f, \psi)+2\|\varphi-\psi\|-a\\
<&P^u(f, \psi).
\end{aligned}
\end{equation*}
So all such $\psi$ have $\mu_\varphi$ as the unique u-equilibrium state by Theorem A. Thus $P^u(f, \psi)=h_{\mu_\varphi}^u(f)+\int_M \psi d\mu_\varphi$ for all $\psi$ in the ball centered at $\varphi$ of radius $a/2$.

$(5)\Rightarrow (6)\Rightarrow (7)$ is clear.

$(7) \Rightarrow (1)$: Assume $(7)$ holds. Let $\mu \in t^u_{\varphi+\psi}(M, f)=\mathcal{M}_{\varphi+\psi}^u(M,f)$ by Theorem E. Then $P^u(f, \varphi+\psi)=h_{\mu}^u(f)+\int_M (\psi+\varphi) d\mu$. We have
\begin{equation*}
\begin{aligned}
0&\leq P^u(f, \varphi+\psi)-P^u(f, \varphi)-\int_M \psi d\mu_\varphi\\
&\leq h_{\mu}^u(f)+\int_M (\psi+\varphi) d\mu-h_{\mu}^u(f)-\int_M \varphi d\mu-\int_M \psi d\mu_\varphi\\
&=\int_M \psi d\mu-\int_M \psi d\mu_\varphi\\
&\leq \|\psi\|\cdot \|\mu-\mu_\varphi\|.
\end{aligned}
\end{equation*}
Hence $0\leq P^u(f, \varphi+\psi)-P^u(f, \varphi)-\int_M \psi d\mu_\varphi \leq \|\psi\|\inf\{\|\mu-\mu_\varphi\|: \mu \in t^u_{\varphi+\psi}(M, f)\}$. Hence $P^u(f, \cdot)$ is Fr\'{e}chet differentiable at $\varphi$.
\end{proof}

\end{document}